\newtheorem{thm}{Theorem}[section]
\newtheorem{cor}[thm]{Corollary}
\newtheorem{prop}[thm]{Proposition}
\newtheorem{lem}[thm]{Lemma}
\newtheorem{fact}[thm]{Fact}
\theoremstyle{definition}
\newtheorem{defn}[thm]{Definition}
\newtheorem{que}[thm]{Question}
\theoremstyle{remark}
\newtheorem{rem}[thm]{Remark}
\let\c@equation\c@thm
\numberwithin{equation}{section}
\title{Deformations of Totally Geodesic Foliations and Minimal Surfaces in Negatively Curved 3-Manifolds}
\author{Ben Lowe}
\email{benl@princeton.edu} 
\begin{document}
	
	\begin{abstract}
		
		Let  $g_t$ be a smooth 1-parameter family of negatively curved metrics on a closed hyperbolic 3-manifold $M$ starting at the hyperbolic metric.  We construct foliations of the Grassmann  bundle $Gr_2(M)$ of tangent 2-planes  whose leaves are (lifts of) minimal surfaces in $(M,g_t)$.  These foliations are deformations of the foliation of $Gr_2(M)$ by (lifts of) totally geodesic planes projected down from the universal cover $\mathbb{H}^3$. Our construction continues to work as long as the sum of the squares of the principal curvatures of the  (projections to $M$) of the leaves remains pointwise smaller in magnitude than the ambient Ricci curvature in the normal direction.  In the second part of the paper we give some applications and construct negatively curved metrics for which $Gr_2(M)$ cannot admit a foliation as above.   


		


		
		\end{abstract} 
		
\maketitle

\section{Introduction}

\subsection{Introduction}
The following statement is a special case of the geodesic rigidity theorem proved by Gromov \cite{georigidity}.    

\begin{thm} \label{geodesicrigidity} 
	Let $M$ be a closed hyperbolic manifold, and let $N$ be a negatively curved Riemannian manifold homeomorphic to $M$. Let $G(M)$ and $G(N)$ be the foliations of the unit tangent bundles $UT(M)$ and $UT(N)$ of $M$ and $N$ by the orbits of the geodesic flow.  Then there is a homeomorphism between $UT(M)$ and $UT(N)$ sending leaves of $G(M)$ to leaves of $G(N)$.    
\end{thm}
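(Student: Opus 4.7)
I would follow the standard boundary-map strategy due to Gromov.

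\emph{Step 1 (Universal covers and boundaries).} Pass to universal covers: $\tilde M = \mathbb{H}^3$, and by Cartan--Hadamard $\tilde N$ is diffeomorphic to $\mathbb{R}^3$. Since $N$ is closed, its sectional curvatures are pinched between two negative constants, so $\tilde N$ is a Gromov-hyperbolic CAT($-k$) space for some $k>0$. The given homeomorphism $M \to N$ identifies the deck groups with a common group $\Gamma$, acting properly cocompactly on both universal covers. By the Milnor--\v{S}varc lemma, there is a $\Gamma$-equivariant quasi-isometry $F\colon \tilde M \to \tilde N$.

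\emph{Step 2 (Boundary map).} By the Morse lemma in Gromov-hyperbolic spaces, $F$ extends to a $\Gamma$-equivariant homeomorphism $h\colon \partial_\infty \tilde M \to \partial_\infty \tilde N$, each boundary being topologically $S^2$. In either universal cover, a complete oriented geodesic is uniquely determined by its pair of ideal endpoints, so $h$ induces a $\Gamma$-equivariant homeomorphism between the spaces of oriented geodesics, both identified with $\partial_\infty^{(2)} := (\partial_\infty \times \partial_\infty)\setminus \Delta$.

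\emph{Step 3 (Lift to unit tangent bundles).} Each unit tangent bundle $UT(\tilde M)$, $UT(\tilde N)$ is a principal $\mathbb{R}$-bundle over $\partial_\infty^{(2)}$, with fiber coordinate given by the geodesic flow. Pick base points $p \in \tilde M$, $q \in \tilde N$ and parameterize each oriented geodesic by its closest-point projection to the chosen base point; this gives smooth trivializations and hence a fiber-preserving homeomorphism $\tilde\phi_0\colon UT(\tilde M) \to UT(\tilde N)$ covering $h$ and sending orbits to orbits. This $\tilde\phi_0$ is not yet $\Gamma$-equivariant, but the discrepancy between $\tilde\phi_0$ and its $\gamma$-conjugates defines a continuous $\mathbb{R}$-valued cocycle on $\Gamma$ over $\partial_\infty^{(2)}$; correcting $\tilde\phi_0$ by a continuous reparameterization along orbits (equivalently, adding a suitable continuous section of the $\mathbb{R}$-bundle) produces a $\Gamma$-equivariant orbit-equivalence $\tilde\phi\colon UT(\tilde M)\to UT(\tilde N)$.

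\emph{Step 4 (Descend).} Since $\Gamma$ acts freely and properly discontinuously on both unit tangent bundles, $\tilde\phi$ descends to the desired homeomorphism $UT(M) \to UT(N)$ sending orbits of the geodesic flow of $M$ to orbits of the geodesic flow of $N$, proving the theorem.

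\emph{Main obstacle.} The conceptually serious step is Step 2 (producing $h$), which relies on Gromov hyperbolicity of $\tilde N$; this uses both negative upper curvature bound and cocompactness. The most technically fiddly step is the cocycle correction in Step 3: the boundary-level equivariance is essentially automatic from the Morse lemma, but promoting it to an equivariant map of the total spaces of the $\mathbb{R}$-bundles requires either a cocycle-cohomology argument or, equivalently, an averaging construction against a Borel fundamental domain. Everything else is formal.
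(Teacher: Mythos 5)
The paper does not actually prove Theorem \ref{geodesicrigidity}; it is quoted as a special case of Gromov's geodesic rigidity theorem and referenced to \cite{georigidity}. So there is no internal proof to compare against, and I will assess your argument on its own terms.

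Your Steps 1 and 2 are correct and are the standard route: Milnor--\v{S}varc gives an equivariant quasi-isometry $F$, the Morse lemma and Gromov-boundary theory give an equivariant boundary homeomorphism $h$, and oriented geodesics on both sides are identified with $\partial_\infty^{(2)}$. (One small slip: the theorem is stated for closed hyperbolic manifolds of arbitrary dimension, so $\tilde M$ should be $\mathbb{H}^n$ and $\partial_\infty\tilde M \cong S^{n-1}$, not $\mathbb{H}^3$ and $S^2$; nothing in your argument uses $n=3$.)

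The genuine gap is in Step 3. You build a non-equivariant fiber-preserving map $\tilde\phi_0$ and then assert that the failure of equivariance is an $\mathbb{R}$-valued cocycle which can be ``corrected'' by adding a continuous section. That correction requires the cocycle class in $H^1\bigl(\Gamma; C(\partial_\infty^{(2)},\mathbb{R})\bigr)$ to vanish, and you give no reason why it should. The alternative you mention---averaging against a Borel fundamental domain---only produces a \emph{measurable} section, not a continuous one, so it does not yield a homeomorphism. The cleaner way to finish is to avoid the cocycle entirely by building an equivariant candidate directly: for $v\in UT(\tilde M)$ with footpoint $p$ and geodesic $\ell_v$, let $\ell_v'$ be the $\tilde N$-geodesic with endpoints $h(\partial_\infty \ell_v)$, and send $v$ to the forward unit tangent of $\ell_v'$ at the nearest-point projection of $F(p)$ to $\ell_v'$. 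This map is continuous and $\Gamma$-equivariant by construction, sends flow lines onto flow lines properly and with degree one (since $F\vert_{\ell_v}$ is a quasi-geodesic uniformly shadowed by $\ell_v'$), and one then needs a routine monotonization along orbits---continuous and equivariant because the shadowing constants are uniform---to make it a bijection on each orbit, hence a homeomorphism. Without something like this, Step 3 as written is an unjustified assertion rather than a proof.
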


In this paper, we study the extent to which a version of this theorem holds when geodesics, which are one-dimensional minimal surfaces, are replaced by two-dimensional minimal surfaces. We restrict ourselves to three ambient dimensions because minimal surfaces in that dimension are better behaved and understood. 

Theorem \ref{geodesicrigidity} implies that many properties of the geodesic flow for an arbitrary  negatively curved metric on $M$ are controlled by the constant curvature geodesic flow.  Much of our interest in trying to prove a minimal surface analogue is that it will allow us to use homogeneous dynamics to study how minimal surfaces in variable negative curvature are distributed in the ambient space. The idea to use homogeneous dynamics in this setting is recent and due to Calegari-Marques-Neves \cite{cmn}.  



 If $M$ is a closed hyperbolic 3-manifold, then the Grassmann bundle of tangent 2-planes to $M$ has a natural foliation by immersed totally geodesic planes.  Denote this foliation by $\mathcal{F}$, and let $g_{hyp}$ be a hyperbolic (constant curvature $-1$) metric on $M$ (Mostow rigidity says that there is a unique such metric up to isometry.)  The following theorem is the main result of this paper.    

\begin{thm} \label{mainintro} 
	Let $\{g_t: t \in [0,1]\}$ be a smooth 1-parameter family of negatively curved metrics on $M$ with $g_0=g_{hyp}$. Then there exists $T \in (0,1] \cup \{\infty\}$ such that for all $t<T$ contained in the interval $[0,1]$ there is a foliation $\mathcal{F}_t$ of $Gr_2(M)$ whose leaves are immersed minimal planes in $(M,g_t)$ lifted to $Gr_2(M)$ by their tangent planes.  Moreover, there is a self-homeomorphism $\Phi$ of $Gr_2(M)$ that sends leaves of $\mathcal{F}$ to leaves of  $\mathcal{F}_t$.

	If $T<\infty$, then for every sequence $t_n \nearrow T$ there exist immersed minimal planes $S_n$ in $(M,g_{t_n})$ which lift to leaves of $\mathcal{F}_{t_n}$ such that the following quantity tends to zero from below for a sequence of points $p_n \in S_n$:        
	\begin{equation} \label{convexintro} 
	  |A_n|^2 + Ric_n ( \nu_n,\nu_n) .    
	\end{equation} 
	Here $\nu_n$ is the unit normal vector to $S_n$ at $p_n$, $A_n$ is the second fundamental form of $S_n$ at $p_n$,  and $Ric_n$ is the Ricci curvature tensor of $g_{t_n}$ at $p_n$.  
	\end{thm}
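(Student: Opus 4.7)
The plan is to apply the implicit function theorem (IFT) leaf by leaf to deform the totally geodesic foliation, and to combine this with an open-closed analysis of the set of admissible parameters $t$.

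Suppose a foliation $\mathcal{F}_{t_0}$ is known to exist and that each of its leaves satisfies the pointwise bound $|A|^2 + Ric_{t_0}(\nu,\nu) \leq -c_0 < 0$ uniformly. For a leaf $S$, write a nearby surface as a normal graph $\exp_S(f\nu)$ over $S$; the linearization of the mean-curvature operator at $f = 0$ is the Jacobi operator $J_S = \Delta_S + |A|^2 + Ric_{t_0}(\nu,\nu)$. The uniform negativity of its zeroth-order coefficient, together with Schauder estimates on the complete non-compact leaf, makes $J_S$ an isomorphism from $C^{2,\alpha}_b(S)$ to $C^{0,\alpha}_b(S)$. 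The IFT then produces, for $t$ close to $t_0$, a unique small normal graph $f_t$ over $S$ whose image is minimal in $(M,g_t)$, depending smoothly on both $t$ and on the leaf parameter.

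At $t=0$ the leaves of $\mathcal{F}$ are totally geodesic, so $|A|^2 \equiv 0$ and $Ric_0(\nu,\nu) \equiv -2$, and the uniform strict-stability hypothesis holds with $c_0 = 2$. The transverse parameter space of $\mathcal{F}$ is covered by local charts in the compact manifold $Gr_2(M)$, so the IFT of the previous paragraph can be applied uniformly in the leaf parameter. The resulting family of graph functions is smooth in both arguments and assembles into a smooth, integrable 2-plane field on $Gr_2(M)$ whose integral leaves are the tangent-plane lifts of the deformed minimal surfaces; this is the foliation $\mathcal{F}_t$. The homeomorphism $\Phi$ is then constructed leaf-by-leaf, sending each leaf of $\mathcal{F}$ to its IFT-perturbed image, and the parametric continuity of the IFT ensures that the maps on neighboring leaves glue continuously across foliation charts.

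Let $T$ be the supremum of $t \in [0,1]$ for which $\mathcal{F}_t$ exists; by the above argument $T \in (0,1] \cup \{\infty\}$ and the set of admissible parameters is the half-open interval $[0,T)$. If $T \leq 1$, the uniform upper bound $|A|^2 + Ric_t(\nu,\nu) \leq -c$ across the leaves of $\mathcal{F}_t$ must fail as $t \to T^-$, since otherwise a compactness argument, using bounded $|A|$ and uniform $C^{2,\alpha}$-control of graph functions, would yield a subsequential limit foliation $\mathcal{F}_T$ still satisfying a uniform strict-stability bound, and the IFT could then extend the foliation past $T$, contradicting maximality. Therefore, along any sequence $t_n \nearrow T$ one can choose leaves $S_n$ of $\mathcal{F}_{t_n}$ and points $p_n \in S_n$ with $|A_n|^2 + Ric_n(\nu_n,\nu_n) \to 0^-$, which is the required degeneration. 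The principal technical obstacle is analytic: the leaves are complete non-compact planes, typically with dense image in $M$, so the argument must be carried out in function spaces of bounded functions, and the requisite uniform Jacobi invertibility together with uniform Schauder estimates must be obtained with constants depending only on geometric quantities controlled by the compactness of $Gr_2(M)$. It is this uniformity that allows the local IFT deformations to assemble into a genuine foliation $\mathcal{F}_t$ and a self-homeomorphism $\Phi$, rather than merely a loose collection of perturbed minimal surfaces.
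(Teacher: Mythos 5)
Your approach — perturbing leaf by leaf via the implicit function theorem applied to the Jacobi operator in bounded Hölder spaces, then running an open–closed argument in $t$ — is genuinely different from the paper's. The paper constructs each deformed leaf $S_t$ in the universal cover as a limit of solutions to compact Plateau problems for a growing sequence of circles (following Anderson), traps these solutions between mean-convex parallel surfaces of the previously constructed $S_{t_0}$, proves uniqueness of the properly embedded minimal plane at finite Hausdorff distance, and then carries out a separate argument that the lifts foliate $Gr_2(\tilde{M})$. The geometric construction is heavier but hands you the absolutely-minimizing property of each leaf for free, and this is used essentially later.

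The real gap in your proposal is the transition from ``each leaf perturbs uniquely'' to ``the perturbed leaves form a foliation of $Gr_2(M)$.'' You assert that the IFT-produced graph functions ``assemble into a smooth, integrable $2$-plane field,'' but no argument is given that (i) the perturbed planes are pairwise disjoint in $\tilde{M}$, and (ii) every point of $Gr_2(\tilde{M})$ lies on exactly one such lifted plane. Because leaves of $\mathcal{F}$ accumulate on one another (they are dense in $Gr_2(M)$), smallness of the graph function $f_t$ in $C^{2,\alpha}_b$ over each $S_{t_0}$ is not by itself enough to prevent two nearby perturbed planes from crossing: the allowed size of $f_t$ is fixed by the Jacobi spectral gap, but the normal separation of neighboring leaves can be arbitrarily small. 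The paper resolves exactly this by (a) showing, via the $\epsilon$-subordinate condition, that each $S_{t_0}$ has a uniformly sized mean-convex neighborhood (Lemma \ref{lemconvex}), (b) foliating $\tilde{M}$ by the $S_{t_0}(x)$ and using those barriers to confine the new minimal planes (Lemmas \ref{trapping}, \ref{vertfoliation}, \ref{lim}), and (c) invoking the codimension-one intersection structure of minimal surfaces in a $3$-manifold together with the absolutely-minimizing property (Lemma \ref{normalform}) to rule out transverse crossings and tangencies. Surjectivity of $\Phi$ is established by a degree argument, not by continuity of the IFT parametrization. Without an analogue of these steps, your IFT deformations are, as you yourself put it near the end, ``a loose collection of perturbed minimal surfaces'' rather than a foliation.

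A secondary point: the invertibility of $J_S = \Delta_S + |A|^2 + \mathrm{Ric}(\nu,\nu)$ as a map $C^{2,\alpha}_b(S)\to C^{0,\alpha}_b(S)$ on a complete non-compact leaf does hold under the uniform strict-negativity assumption, but the maximum principle needed for the $C^0$ bound must be an Omori–Yau type argument (the supremum need not be attained), and one must verify that the constants are uniform over all leaves simultaneously. You gesture at this, but it is one of the places where the paper's compactness reduction to a fundamental domain (via covering transformations) does real work. The final degeneration statement when $T<\infty$ is reasoned correctly modulo the same foliation-compactness issue.
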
   
	
	\begin{rem} 
		The same theorem should actually hold for all complete hyperbolic 3-manifolds $M$, with the appropriate bounded geometry condition on the family $g_t$.  The assumption that the action of $\pi_1(M)$ on $\mathbb{H}^3$ has a compact fundamental domain makes some of the proofs simpler but does not seem to be essential.  Because of the applications we have in mind, though, we restrict ourselves to the closed case.     	
		
		\end{rem} 
		
		\begin{rem} 

			In Section \ref{counterexample}, we construct negatively curved metrics on certain closed hyperbolic 3-manifolds $M$ for which there cannot exist a foliation as in Theorem \ref{mainintro}. The analogue of the geodesic rigidity of Theorem \ref{geodesicrigidity} therefore in general only holds in our setting in some neighborhood of the hyperbolic metric.  
			
			\end{rem}

In words, our construction of the foliations $\mathcal{F}_t$ continues to work as long as the sum of the squares of the principal curvatures (of the projections to $M$) of the leaves of our foliations remains pointwise less than the absolute value of the ambient Ricci curvature in the normal direction.   The proof of Theorem \ref{mainintro} occupies Section \ref{mainsection}, where we prove a more intrinsic formulation of it (Theorem \ref{main}), and then modify that proof to give a proof of Theorem \ref{mainintro}.  The proof is loosely speaking a method of continuity argument, where we work in the universal cover and follow the approach of Anderson \cite{and2} to construct properly embedded minimal planes.  Surfaces for which the quantity (\ref{convexintro}) is negative have small mean-convex neighborhoods, which we use to rule out the existence of minimal planes other than the ones from our construction.  These would lead to gaps in the foliations we are trying to construct.   

In \cite{groapp}, Gromov proved a stability result for the totally geodesic foliation $\mathcal{F}$ that applies to metrics $g$ with sectional curvatures pinched close to $-1$.  For these metrics, he constructs an immersed almost-totally-geodesic $g$-minimal plane in $M$ for each leaf of $\mathcal{F}$.  His construction also follows \cite{and2} but, in contrast to this paper, works for closed hyperbolic manifolds of all dimensions and is based on Allard's regularity theorem.  This paper grew out of attempts to find a more direct proof of Gromov's results in dimension 3.

\subsection{Almost-Fuchsian Manifolds} 	
Theorem \ref{mainintro} was motivated by the theory of almost-Fuchsian manifolds.   A homeomorphism $f:S^2 \rightarrow S^2$ is \textit{K-quasiconformal} if for any ball $B(x,r)\subset S^2$ there exists $r'>0$ such that $B(f(x),r')\subset f(B(x,r)) \subset B(f(x),Kr')$.  Here $B(x,s)$ denotes the ball centered at $x$ of radius $s$ in the round metric on $S^2$. The homeomorphism $f$ is \textit{quasiconformal} if it is $K$-quasiconformal for some $K$.   For $\Sigma$ a surface of genus greater than one, a hyperbolic metric on $\Sigma \times \mathbb{R}$ is given by a properly discontinuous action of $\pi_1(\Sigma)$ on $\mathbb{H}^3$ by isometries.  The \textit{limit set} of a hyperbolic metric on $\Sigma \times \mathbb{R}$ is the set of accumulation points of any orbit under this action in the boundary at infinity $ \partial_{\infty}\mathbb{H}^3\cong S^2$.  We say that a hyperbolic metric on $\Sigma \times \mathbb{R}$ is \textit{quasi-Fuchsian} if its limit set is the image of the equator under a quasiconformal homeomorphism of $S^2$. The space of all quasi-Fuchsian metrics is parametrized by a product of Teichmuller spaces which correspond to the conformal structures on the two ends.  
	
In \cite{uhl}, Uhlenbeck proved a rigidity theorem for quasi-Fuchsian manifolds which admit an embedded minimal surface with principal curvatures less than 1 in magnitude: that such a quasi-Fuchsian manifold is uniquely determined by the conformal class of the induced metric on this minimal surface and a quadratic differential equivalent to its second fundamental form.  These manifolds are called \textit{almost-Fuchsian}, and have been well studied since (\cite{almostfuchsian}, \cite{zenononuniq},  \cite{andrewopen}, \cite{seppi}, \cite{taubesaf} .)  

The set of metrics to which Theorem \ref{mainintro} applies inside the space of all negatively curved metrics on $M$ is analogous to the set of almost-Fuchsian metrics on $\Sigma \times \mathbb{R}$ inside the space of quasi-Fuchsian metrics, insofar as the existence of minimal surfaces with curvatures bounded by ambient curvatures allows for much greater control.  This makes it possible, for instance, to prove uniqueness statements for the minimal surfaces in question. Guided by this analogy, in Section \ref{counterexample} we construct negatively curved metrics for which foliations as in the statement of Theorem \ref{mainintro} cannot possibly exist. 


\subsection{Applications}

We now describe some applications. Principal among these is the following density result for stable properly immersed minimal surfaces in a closed Riemannian 3-manifold $M$ which admits a foliation as in Theorem \ref{mainintro}.  Kahn and Markovic showed  that for every closed hyperbolic 3-manifold $M$, subgroups of $\pi_1(M)$ isomorphic to the fundamental group of a closed surface, or \textit{surface subgroups},  exist in great profusion (\cite{km},\cite{kmcounting}.)  (See also \cite{ursulah}, which gives a more geometric version of Kahn-Markovic's construction and generalizes their results to cocompact lattices in all rank one symmetric spaces except for hyperbolic spaces of even dimension.)   Fixing a metric $g$ on $M$, each of these surface subgroups gives rise by \cite{sacksuhl} or \cite{schoenyauincompressible} to a stable properly immersed minimal surface whose fundamental group includes as a subgroup of $\pi_1(M)$ conjugate to that surface subgroup.  

Let $C$ be a circle in $\partial_\infty{\mathbb{H}^3} \cong S^2$ such that the geodesic plane $P$ in $\mathbb{H}^3$ with limit set $C$ has dense projection to the closed hyperbolic 3-manifold $M$ under the universal covering map. Ratner and Shah independently proved that every geodesic plane $P$ either projects to a dense subset of $M$ whose tangent planes are dense in $Gr_2(M)$ or a closed properly immersed surface (\cite{ratner},\cite{shah}.) (See also \cite{mmo} for a nice proof of this fact.) Let $\Gamma_n$ be a sequence of surface subgroups of $\pi_1(M)$ with limit sets $K_n$-quasicircles Hausdorff converging to  $C$ with $K_n$ tending to 1. (A K-quasicircle is the image of a round circle under a $K$-quasiconformal self-homeomorphism of $S^2$.)  The existence of such a sequence of $\Gamma_n$ for each $C$ follows from \cite{km}.  Let $\Sigma_n$ be a sequence of stable immersed minimal surfaces in $(M,g)$ whose fundamental groups include to the conjugacy classes of the $\Gamma_n$.


\begin{thm} \label{densityintro} 
Suppose that $g$ can be joined to $g_{hyp}$ by a smooth family of negatively curved metrics parametrized by $[0,1]$ to which Theorem \ref{mainintro} applies with $T=\infty$. Let $\Sigma_n$ be a sequence as above. Then for every open set $U$ in $Gr_2(M)$ there exists a number $N$ so that $\Sigma_n$ has a tangent plane in $U$ for every $n>N$.       
\end{thm}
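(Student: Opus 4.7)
The plan is to use the foliation $\mathcal{F}_1$ supplied by Theorem \ref{mainintro} at $t = 1$ to transport the density of tangent planes from the totally geodesic $P$ to the corresponding $g$-minimal leaf, and then to show that $\Sigma_n$ accumulates onto that leaf on every compact region of the universal cover. Let $\tilde{M}$ denote the universal cover of $M$ with the lifted metric $\tilde{g}$, and write $\tilde{\mathcal{F}}_1$ and $\tilde{\Phi}: Gr_2(\mathbb{H}^3) \to Gr_2(\tilde{M})$ for the $\pi_1(M)$-equivariant lifts of $\mathcal{F}_1$ and of the homeomorphism $\Phi$. Set $\tilde{S} = \tilde{\Phi}(P)$. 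Because $\Phi$ is a $\pi_1(M)$-equivariant homeomorphism and the tangent planes of $P$ are dense in $Gr_2(M)$ (applying the Ratner--Shah alternative to the density hypothesis on $P$), those of $\pi(\tilde{S}) \subset M$ are also dense in $Gr_2(M)$.

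Since $M$ is closed and negatively curved, any $\pi_1(M)$-equivariant quasi-isometry $\mathbb{H}^3 \to (\tilde{M}, \tilde{g})$ induces a canonical $\pi_1(M)$-equivariant homeomorphism $\phi: \partial_\infty \mathbb{H}^3 \to \partial_\infty \tilde{M}$. The $\tilde{g}$-limit set of $\Gamma_n$ is $\phi(K_n)$, and $\phi(K_n) \to \phi(C)$ as $K_n \to C$. Fix a lift $\tilde{\Sigma}_n \subset \tilde{M}$ of $\Sigma_n$; it is a $\Gamma_n$-invariant stable $\tilde{g}$-minimal disk on which $\Gamma_n$ acts cocompactly, so its asymptotic boundary in $\partial_\infty \tilde{M}$ is precisely $\phi(K_n)$.

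The central step is to prove that $\tilde{\Sigma}_n \to \tilde{S}$ smoothly on compact subsets of $\tilde{M}$. Schoen's curvature estimate for stable minimal surfaces in a 3-manifold of bounded geometry, applied in fixed radius balls of $(\tilde{M}, \tilde{g})$, yields $|A_{\tilde{\Sigma}_n}| \leq C$ uniformly in $n$. Standard elliptic regularity then gives $C^{2,\alpha}$-precompactness of $\{\tilde{\Sigma}_n\}$ on every compact region of $\tilde{M}$, and any subsequential limit $\tilde{S}_\infty$ is a stable $\tilde{g}$-minimal plane with asymptotic boundary $\phi(C)$. To conclude that $\tilde{S}_\infty = \tilde{S}$ one uses that $\tilde{S}$, being a leaf of $\tilde{\mathcal{F}}_1$, satisfies the pointwise strict inequality (\ref{convexintro}) and therefore possesses a mean-convex tubular neighborhood in $\tilde{M}$---the very property driving the continuity argument of Theorem \ref{mainintro}. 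Combined with the fact that $\tilde{S}$ and $\tilde{S}_\infty$ share the asymptotic boundary $\phi(C)$, this tubular neighborhood forces the two planes to coincide by a strong maximum principle argument. This identification of the limit is the main obstacle, and it is precisely here that the global hypothesis $T = \infty$ is essential.

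To finish, let $U \subset Gr_2(M)$ be open with lift $\tilde{U} \subset Gr_2(\tilde{M})$. By density of the tangent planes of $\pi(\tilde{S})$, there exist $\eta \in \pi_1(M)$ and a tangent plane $\tilde{\xi}$ of $\eta \tilde{S}$ with $\tilde{\xi} \in \tilde{U}$. Translating the convergence of the previous paragraph by the isometry $\eta$, $\eta \tilde{\Sigma}_n \to \eta \tilde{S}$ smoothly in a neighborhood of the footpoint of $\tilde{\xi}$, so for all sufficiently large $n$ the surface $\eta \tilde{\Sigma}_n$ has a tangent plane inside $\tilde{U}$. Projecting to $M$ yields a tangent plane of $\Sigma_n$ inside $U$, as desired.
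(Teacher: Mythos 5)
Your overall plan is the same as the paper's: use the Ratner--Shah alternative to get density of the tangent planes of the leaf $\tilde{S}=\tilde{\Phi}(P)$, then show $\tilde{\Sigma}_n\to\tilde S$ in $C^\infty_{loc}$ in the universal cover, and translate by a deck transformation to hit an arbitrary small open set. The translation step at the end is exactly the paper's, and the use of $\Phi$ to transport density from $\mathcal{F}$ to $\mathcal{F}_g$ is also the same. The core of the proof is the local convergence $\tilde\Sigma_n\to\tilde S$, which the paper isolates as Lemma \ref{traintracks}.

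Your treatment of that step has a real gap. You extract $C^{2,\alpha}_{loc}$ subsequential limits from the Schoen curvature bound, and then argue the limit must be $\tilde S$ by combining ``same asymptotic boundary $\phi(C)$'' with the mean-convex tubular neighborhood of $\tilde S$ and a maximum principle. But the compactness argument as stated does not control where the $\tilde\Sigma_n$ are: a priori the surfaces could drift to infinity (so a subsequential limit could be empty), and even if the limit $\tilde S_\infty$ is nonempty, knowing that the limit sets $\phi(K_n)\to\phi(C)$ does not by itself give you that $\tilde S_\infty$ has asymptotic boundary $\phi(C)$, nor that it is at finite Hausdorff distance from $\tilde S$ --- which is the hypothesis under which the uniqueness statement in Theorem \ref{main} (and the maximum-principle sweep in Section \ref{uniqsection}/Lemma \ref{lim}) actually applies. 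The paper supplies exactly the missing ingredient: it first traps $\tilde\Sigma_n$ (for large $n$) between nearby leaves $L'(\pm\alpha)$ of the foliation provided by Lemma \ref{vertfoliation}, using that the hyperbolic counterpart $\tilde\Sigma_n$ lies between $L(\pm\alpha)$ by the convex hull property and that the mean-convex parallel surfaces to the $L'(t)$ act as barriers. This trapping simultaneously gives (a) that the $\tilde\Sigma_n$ stay in a fixed compact region so subsequential limits exist and are nonempty, (b) that any limit is a properly embedded plane at bounded Hausdorff distance from $\tilde S$, and (c) the uniqueness of the limit. You should insert this barrier/trapping step before invoking compactness; once in place, your maximum principle argument goes through and matches the paper's.

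One further small point: you write ``any subsequential limit $\tilde S_\infty$ is a stable $\tilde g$-minimal \emph{plane}.'' A smooth subsequential limit of planes need not be a plane without a priori control: it could be disconnected, could be a closed surface, or could be something with the wrong topology. Again, the trapping between $L'(\pm\alpha)$ plus the graph-over-$\tilde S$ structure used in Section \ref{mainsection} is what rules these out and shows the limit is a properly embedded plane.
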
 

\begin{rem} 
This is a slightly stronger statement than simply that the tangent planes of all closed stable immersed minimal surfaces are dense in $Gr_2(M)$, which could be obtained without using the Ratner-Shah theorem mentioned above.  
\end{rem} 
 
A natural question is whether a similar density result is true for all negatively curved metrics on $M$. For negatively curved metrics on $M$ which cannot admit foliations as in Theorem \ref{mainintro}, like those constructed in Section \ref{counterexample} we believe it is possible that a sequence of $\Sigma_n$ as above might fail to be dense in $Gr_2(M)$.    

If $g=g_{hyp}$, it follows from \cite{seppi} that if the limit set of $\Gamma_n$ is a $K$-quasicircle for $K$ sufficiently close to 1, then $\Sigma_n$ is the unique  minimal surface whose fundamental group injectively includes as a subgroup conjugate to $\Gamma_n$. In Section \ref{applications}, we prove a similar uniqueness result for $g$ to which Theorem \ref{mainintro} applies to produce a foliation:  
\begin{thm} \label{uniqintro} 
Suppose that $g$ satisfies the hypotheses of the previous theorem. Then there exists $\delta>0$ such that the following is true.  Suppose the limit set of a surface subgroup $\Gamma$ of $\pi_1(M)$ in $\partial_\infty \mathbb{H}^3$ is a $K$-quasicircle for $K<1+\delta$.  Then there is a unique $g$-minimal surface in $M$ whose fundamental group injectively includes in $\pi_1(M)$ as a subgroup conjugate to $\Gamma$.      
\end{thm}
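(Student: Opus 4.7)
The plan is to establish a uniform strict stability bound $|A|^2+\mathrm{Ric}(\nu,\nu)<-\eta<0$ for any $g$-minimal surface representing $\Gamma$ when $K<1+\delta$, and then to derive uniqueness via the mean-convex neighborhood construction of Section \ref{mainsection}, in analogy with Uhlenbeck's \cite{uhl} and Seppi's \cite{seppi} arguments in the almost-Fuchsian setting (recall that for $g_{hyp}$ this bound specializes to $\|A\|<1$ since $\mathrm{Ric}(\nu,\nu)=-2$ and $|A|^2=2\lambda_1^2$ on a minimal surface). Existence of a stable $g$-minimal representative of $\Gamma$ follows from \cite{sacksuhl} or \cite{schoenyauincompressible}, so the content is uniqueness. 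Lift any two minimal representatives to $\Gamma$-invariant properly embedded $\tilde{g}$-minimal disks $\tilde{\Sigma}_1,\tilde{\Sigma}_2\subset(\tilde{M},\tilde{g})$ sharing the asymptotic boundary $\Lambda(\Gamma)\subset\partial_\infty\tilde{M}\cong\partial_\infty\mathbb{H}^3$, where the boundary identification is the $\pi_1(M)$-equivariant Morse homeomorphism; the goal is $\tilde{\Sigma}_1=\tilde{\Sigma}_2$.

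The key step is: for some $\delta=\delta(g)>0$, any $\Gamma$-invariant $\tilde{g}$-minimal disk with $K$-quasicircle asymptotic boundary for $K<1+\delta$ satisfies the strict stability bound with a uniform gap. I would argue by contradiction and compactness: otherwise take surface subgroups $\Gamma_n$ with $K_n\searrow 1$ and $\Gamma_n$-invariant $\tilde{g}$-minimal disks $\tilde{\Sigma}_n$ where stability fails at $p_n\in\tilde{\Sigma}_n$, translate $p_n$ into a fixed fundamental domain by elements $\gamma_n\in\pi_1(M)$, and pass to a subsequential geometric limit $\tilde{\Sigma}_\infty$. The translated limit sets $\gamma_n\Lambda(\Gamma_n)$ are still $K_n$-quasicircles and, after possibly a further subsequence, Hausdorff-converge to a round circle $C\subset\partial_\infty\mathbb{H}^3$, so $\tilde{\Sigma}_\infty$ is a $\tilde{g}$-minimal disk with round asymptotic boundary. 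A sub-uniqueness statement identifies $\tilde{\Sigma}_\infty$ with the leaf $L_C$ of $\mathcal{F}_t$ associated via $\Phi$ to the totally geodesic plane in $\mathbb{H}^3$ with limit set $C$; since every leaf of $\mathcal{F}_t$ satisfies strict stability by the hypothesis $T=\infty$, this contradicts failure of the bound at the limit of $p_n$. The sub-uniqueness I would prove by combining the mean-convex tube around $L_C$ produced by Theorem \ref{mainintro} with a convex-hull estimate after \cite{and2}, ruling out competing minimal disks with the same round asymptotic boundary.

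Given uniform strict stability, uniqueness follows by the strong maximum principle. The mean-convex tube around $\tilde{\Sigma}_1$ is foliated by equidistant surfaces whose mean curvatures point strictly towards $\tilde{\Sigma}_1$ out to a definite distance, and it suffices to show $\tilde{\Sigma}_2$ lies inside this tube; this I would obtain by a $\tilde{g}$-convex hull or barrier estimate constraining both disks to a thin neighborhood of a totally geodesic cap when $K<1+\delta$. The signed distance from $\tilde{\Sigma}_2$ to $\tilde{\Sigma}_1$ is then $\Gamma$-invariant on $\tilde{\Sigma}_2$, descends to the compact quotient $\tilde{\Sigma}_2/\Gamma$, and attains its extrema, where the strong maximum principle forces vanishing and hence $\tilde{\Sigma}_1=\tilde{\Sigma}_2$. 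The main obstacle is the sub-uniqueness for round asymptotic boundaries in $(\tilde{M},\tilde{g})$: classical in $\mathbb{H}^3$ by convex hulls, but for variable curvature a convex-hull argument alone may not rule out far-away competing minimal disks, in which case a robust alternative is a method of continuity along the family $g_t$ reducing to the hyperbolic case at $t=0$, at each step using the implicit function theorem on the minimal surface equation, which is invertible precisely because strict stability makes the Jacobi operator non-degenerate.
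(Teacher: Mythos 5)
Your overall strategy---establish a uniform $\epsilon'$-subordination bound for $g$-minimal representatives of $\Gamma$ when $K<1+\delta$, then exploit the resulting mean-convex tube via a maximum-principle argument on the compact quotient---matches the shape of the paper's proof, but you miss the specific device that makes it work and, as a result, you flag an obstacle that the paper actually dissolves rather than confronts. The paper's key move is to transfer Seppi's pinching estimate through the conjugating map $\Phi$: Seppi produces, for each point $p$ of the $g_{hyp}$-minimal disk $\tilde\Sigma$ with $K$-quasicircle boundary, totally geodesic planes $L_1(p),L_2(p)$ sandwiching $\tilde\Sigma$ at normal distance $O(\delta)$; applying $\tilde\Phi$ turns these into leaves $L_1'(p),L_2'(p)$ of $\tilde{\mathcal F}_g$, which come equipped with mean-convex tube foliations from the $\epsilon$-subordination of the foliation, and the barrier argument of Lemma~\ref{traintracks} then pins \emph{any} $g$-minimal $\Gamma$-equivariant disk $\tilde\Sigma'$ between them. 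This both (i) gives the $\epsilon/2$-subordination you want in your Step~1, since $\tilde\Sigma'$ is thereby locally $C^1$-close (hence $C^2$-close by elliptic regularity) to the $\epsilon$-subordinate leaves, without needing your contradiction/blow-up argument, and (ii) yields your Step~3 closeness directly: the paper packages this as a surjective map $\tilde f_\Sigma:\tilde\Sigma\to\tilde\Sigma'$ sending $p$ to the unique intersection of $\tilde\Sigma'$ with the short normal segment between $L_1'(p)$ and $L_2'(p)$, so that two candidate surfaces $\tilde\Sigma',\tilde\Sigma''$ receive surjective maps from $\tilde\Sigma$ with $d(\tilde f_\Sigma^{(1)}(p),\tilde f_\Sigma^{(2)}(p))=O(\delta)<\xi/2$, while the mean-convex tubes force distinct such surfaces to have Hausdorff distance at least $\xi$, a contradiction.

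Two further remarks. First, the ``sub-uniqueness for round asymptotic boundaries'' you single out as the main obstacle, and for which you propose a method-of-continuity alternative, is already part of Theorem~\ref{main}: the leaf $S_t$ is \emph{by construction} the unique properly embedded minimal plane at finite Hausdorff distance from the corresponding $P\in\mathcal P_{\tilde g_t}$, so nothing new needs to be proved there. Second, your concern that a ``$\tilde g$-convex hull argument alone may not rule out far-away competing minimal disks'' is not where the difficulty lies in this theorem: any two $\Gamma$-equivariant minimal disks are automatically at finite Hausdorff distance from one another because $\Gamma$ acts cocompactly on each and $\tilde M$ is quasi-isometric to $\mathbb H^3$. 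The genuine difficulty, which the paper resolves with the $\tilde f_\Sigma$ construction, is making the distance quantitatively smaller than the tube width $\xi$, with the smallness coming from $K\to 1$ via Seppi's estimate in the hyperbolic picture, transported by the uniform continuity of $\tilde\Phi$. Your closing maximum-principle step on the signed distance over $\tilde\Sigma_2/\Gamma$ is then equivalent to the paper's Hausdorff-distance dichotomy, and would be fine once the closeness is in hand.
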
 

\subsection{Quantitative Density} 

Let $\Sigma_n'$ be a sequence of minimal surfaces in $(M,g)$ with areas tending to infinity such that the limit sets of the $\pi_1(\Sigma_n)$ in $\partial_\infty \mathbb{H}^3$ are $K_n$-quasicircles with $K_n$ tending to 1.  Let $\Sigma_n$ be the corresponding sequence of minimal surfaces in the hyperbolic metric on $M$.  

Let $\mu_n'$ be the probability measures on $Gr_2(M)$ that correspond to averaging over the lift of $\Sigma_n'$ to $Gr_2(M)$ in the area form for the metric on $\Sigma_n'$ induced by $(M,g)$.  Let $\mu_n$ be the corresponding measures for the $\Sigma_n$. We are able to prove the following quantitative version of Theorem \ref{densityintro}.    

\begin{thm} \label{quantitativeintro} 
Let $g$ satisfy the assumptions of Theorem \ref{densityintro}. Suppose that the $\mu_n$ weak-$*$ converge to the uniform measure on $Gr_2(M,g_{hyp})$.  Then the $\mu_n'$ weak-$*$ converge to a measure $\mu_g$ on $Gr_2(M,g)$ with full support. 
\end{thm}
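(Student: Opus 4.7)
The plan is to identify the weak-$*$ limit of the $\mu_n'$ as $\mu_g := \Phi_*\mu_{hyp}$, where $\Phi : Gr_2(M) \to Gr_2(M)$ is the self-homeomorphism supplied by Theorem~\ref{mainintro} carrying $\mathcal{F}$ to the foliation $\mathcal{F}_g$ by $g$-minimal planes, and $\mu_{hyp}$ is the Liouville measure on $Gr_2(M, g_{hyp})$ to which the $\mu_n$ converge by hypothesis. Once this identification is in hand, full support of $\mu_g$ is automatic: $\mu_{hyp}$ assigns positive mass to every non-empty open set, and $\Phi$ is a homeomorphism, so $\Phi_*\mu_{hyp}(V) = \mu_{hyp}(\Phi^{-1}(V)) > 0$ for every non-empty open $V$. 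Since pushforward by $\Phi$ is weak-$*$ continuous, the hypothesis $\mu_n \rightharpoonup \mu_{hyp}$ yields $\Phi_*\mu_n \rightharpoonup \mu_g$, and the theorem reduces to the convergence
\[ \mu_n' - \Phi_*\mu_n \rightharpoonup 0. \]

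To establish this, I would work in the universal cover. Write $\widetilde{\mathcal{F}}$ and $\widetilde{\mathcal{F}}_g$ for the lifts of $\mathcal{F}$ and $\mathcal{F}_g$ to $Gr_2(\mathbb{H}^3)$, and $\widetilde{\Phi}$ for the lift of $\Phi$. Let $\widetilde{\Sigma}_n$ and $\widetilde{\Sigma}_n'$ denote the $\Gamma_n$-invariant minimal disks in $\mathbb{H}^3$ lifting $\Sigma_n$ and $\Sigma_n'$, unique for $K_n$ close to $1$ by \cite{uhl} in the hyperbolic case and by Theorem~\ref{uniqintro} in the $g$-case. The key lemma to prove is: for every compact $B \subset \mathbb{H}^3$ and every $\varepsilon > 0$, once $K_n$ is sufficiently close to $1$, there is a leaf $P_n$ of $\widetilde{\mathcal{F}}$ such that $\widetilde{\Sigma}_n \cap B$ is $\varepsilon$-close to $P_n$ in $C^2$ and $\widetilde{\Sigma}_n' \cap B$ is $\varepsilon$-close to $\widetilde{\Phi}(P_n)$ in $C^2$ (viewing leaves as immersed planes in $\mathbb{H}^3$). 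This is a quantitative form of the convergence of equivariant minimal disks to foliation leaves as their boundary quasicircles round out, and should follow from the uniqueness and stability arguments underlying Theorem~\ref{uniqintro} together with standard elliptic regularity.

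Granting the lemma, the convergence $\mu_n' - \Phi_*\mu_n \rightharpoonup 0$ follows by a covering-and-averaging argument. For a continuous $f : Gr_2(M) \to \mathbb{R}$, lift $\int f\, d\mu_n'$ and $\int (f \circ \Phi)\, d\mu_n$ to normalized integrals against a $\pi_1(M)$-fundamental domain $F \subset \mathbb{H}^3$, intersected with the tangent-plane lifts of the $\pi_1(M)$-translates of $\widetilde{\Sigma}_n'$ and $\widetilde{\Sigma}_n$. Cover $F$ by balls of large fixed radius $R$ and in each ball replace each sheet of the two minimal disks by its matching leaf $P_n$ or $\widetilde{\Phi}(P_n)$. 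Since $\Phi$ intertwines the tangent-plane lifts of matched leaves, the two integrals agree after replacement; the replacement error is controlled by $\varepsilon$, the modulus of continuity of $f \circ \Phi$, and the uniform bi-Lipschitz equivalence of $g$ and $g_{hyp}$ on $M$, which comparably bounds the two area normalizations. Sending $K_n \to 1$ with $R$ fixed, then $R \to \infty$, yields the claim.

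The main obstacle is the key lemma. The $C^2$-closeness must hold not only for a single pair of matched leaves but uniformly across all sheets of the $\pi_1(M)$-translates of $\widetilde{\Sigma}_n$ meeting a fundamental domain of $\pi_1(M)$, with the rate $\varepsilon \to 0$ depending only on $K_n - 1$ and not on the particular sheet. Ensuring such uniformity---in both the Schauder-type estimates and the matching of leaves in the two different metrics through $\widetilde{\Phi}$ rather than merely topologically---is the main technical work; the averaging step in the preceding paragraph should be routine once the lemma is established.
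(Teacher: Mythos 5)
Your identification of the limiting measure is incorrect, and the error is located precisely in the phrase ``the two integrals agree after replacement.'' You take $\mu_g := \Phi_*\mu_{g_{hyp}}$; the paper's $\mu_g$ is a genuinely different measure. Recall that $\mu_n'$ is defined by averaging over $\Sigma_n'$ against the area form induced by $g$, while $\Phi_*\mu_n$ carries the $g_{hyp}$-induced area form of $\Sigma_n$ pushed forward by $\Phi$. The map $\Phi$ restricted to a leaf of $\mathcal{F}$ is a diffeomorphism onto a leaf of $\mathcal{F}_g$ but is \emph{not} a leafwise isometry: it is (the nearest-point projection limit of) the maps $f_{\Sigma_n}$, which have a nontrivial Jacobian. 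So replacing a sheet of $\widetilde\Sigma_n$ by its nearby totally geodesic plane $P_n$ and a sheet of $\widetilde\Sigma_n'$ by its nearby minimal plane $\widetilde\Phi(P_n)$ does not make $\int(f\circ\Phi)\,dA_{g_{hyp}}$ over $P_n$ equal to $\int f\,dA_g$ over $\widetilde\Phi(P_n)$; they differ by the leafwise Jacobian $f_\Phi$ of $\Phi$. The ``uniform bi-Lipschitz'' remark only gives a two-sided bound on this factor, not equality, and such a bound is precisely what is \emph{not} enough to pin down the weak-$*$ limit.

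The paper handles this by writing $\Phi_*\mu_{g_{hyp}} = (f_\Phi\circ\Phi^{-1})\cdot\mu_g$, where $\mu_g$ is the transverse sum of the pushed-forward transverse invariant measure and the $g$-induced leaf area forms (normalized). It then observes that the maps $f_{\Sigma_n}$ converge locally to $\Phi$ along leaves, so $(f_{\Sigma_n})_*\mu_n$ locally converges to $(f_\Phi\circ\Phi^{-1})\cdot\mu_n'$; combined with $\mu_n\rightharpoonup\mu_{g_{hyp}}$ this yields $\mu_n'\rightharpoonup\mu_g$, not $\Phi_*\mu_{g_{hyp}}$. Your key lemma and covering-and-averaging framework are essentially the paper's Lemma~\ref{traintracks} plus local convergence of $f_{\Sigma_n}$ to $\Phi$, so that part of the scaffolding is sound; what is missing is precisely the Jacobian density $f_\Phi\circ\Phi^{-1}$ in the final accounting. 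Incidentally, the conclusion of full support survives your misidentification, since $f_\Phi\circ\Phi^{-1}$ is continuous and positive on the compact space $Gr_2(M)$, so $\mu_g$ and $\Phi_*\mu_{g_{hyp}}$ are mutually absolutely continuous with bounded density ratio. But the proposal, as written, would prove the false statement $\mu_n'\rightharpoonup\Phi_*\mu_{g_{hyp}}$ rather than the correct $\mu_n'\rightharpoonup\mu_g$.
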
 

If $\mathcal{F}_g$ is the foliation from Theorem \ref{mainintro}, then the measure $\mu_g$ is the sum of a transverse invariant measure for $\mathcal{F}_g$ and the Riemannian area forms for the metrics on the leaves of $\mathcal{F}_g$ induced by their projections to $(M,g)$--- see Section \ref{stronger} for more details. 

\begin{rem} \label{mozshah} 
Mozes-Shah  \cite{mozesshah} proved that any sequence of totally geodesic $\Sigma_n$ in $(M,g_{hyp})$ with area tending to infinity becomes uniformly distributed in $Gr_2(M)$.  Theorem \ref{quantitativeintro} therefore applies when the $\Sigma_n$ are all totally geodesic.  In this case the $\Sigma_n'$ are (projections of) leaves of our foliation.

	
	\end{rem}

In Theorem \ref{constcurvature} below we prove that the $\mu_n$ always converge to the uniform measure provided that $M$ contains no properly immersed totally geodesic surfaces in its hyperbolic metric.  The condition that $M$ have no totally geodesic surfaces is a no-closed-orbits assumption for the action of $PSL(2,\mathbb{R})$ on the frame bundle of $M$. It is analogous to a unique ergodicity assumption for (one-dimensional) dynamical systems.  For uniquely ergodic dynamical systems on a compact space, a simple argument shows that the equidistribution ergodic theorem holds for \textit{all} space averages,  not just almost all \cite[Proposition 1.9]{pisanotes}.  We use a similar argument, together with Ratner's measure classification theorem, to prove that geodesic disks in $Gr_2((M,g_{hyp}))$ are becoming uniformly distributed at some rate uniform in the radii of the disks.  We then locally approximate the $g_{hyp}$-minimal surfaces $\Sigma_n$  by large totally geodesic disks to show that these surfaces are becoming uniformly distributed.  

We expect that  the $\mu_n$  converge to the uniform measure with no assumptions on $M$.    It seems likely, for example, that the probability measures corresponding to a sequence of minimal surfaces $\Sigma_n^{hyp}$ that injectively include to the surface subgroups that come from Hamenstadt's version \cite{ursulah} of the Kahn-Markovic construction are becoming uniformly distributed, but we do not verify this here.  


\subsection{Non-Existence of Foliations as in Theorem \ref{mainintro} } 

We now describe the construction of negatively curved metrics to which Theorem \ref{mainintro} cannot apply to produce a foliation.  It is based on the existence of quasi-Fuchsian manifolds $Q$ which contain several distinct embedded  minimal surfaces whose inclusions are homotopy equivalences (this contrasts with the almost-Fuchsian case, where it was shown in \cite{uhl} that there exists a unique such minimal surface.) 

We start out with a closed hyperbolic 3-manifold $M$ that contains an embedded totally geodesic surface. By passing to a finite cover which we also denote by $M$, we can make the totally geodesic surface $\Sigma$ have arbitrarily large normal injectivity radius. Taking the Fuchsian cover $F$  corresponding to $\Sigma$, which is homeomorphic to $\Sigma \times \mathbb{R}$, we modify the pulled-back metric while preserving negative curvature, so that we can cut out the middle of $F$ and glue in the middle of a quasi-Fuchsian $Q$ with multiple distinct minimal surfaces. To accomplish this, we also need to modify the metric on $Q$ near infinity, which we do using the fact that a quasi-Fuchsian metric on $\Sigma \times \mathbb{R}$, whatever disorderly behavior is happening in the middle, has a standard form near the ends.  Provided the normal injectivity radius of $\Sigma$ was taken large enough, the gluing can be performed inside $M$ itself.  This produces a negatively curved metric $g$ on $M$ for which there are multiple stable minimal surfaces isotopic to $\Sigma$, which is incompatible with the existence of a foliation as in Theorem \ref{mainintro}.   

We expect that this metric can be joined to the constant curvature metric through a smooth path of metrics with negative sectional curvature by performing the above construction on a smooth path in quasi-Fuchsian space joining $F$ to $Q$. This would show that the case $T<\infty $ of Theorem \ref{mainintro} actually occurs.   It would be good to find a more robust way of ruling out the existence of foliations as in Theorem \ref{mainintro}--- for instance, one that worked for all closed hyperbolic 3-manifolds.

\subsection{Stability for the Foliations of Theorem \ref{mainintro} } 

In the final section, we give an estimate for how fast the principal curvatures (of the projections to $M$) of the leaves of the foliations $\mathcal{F}_t$ are changing as the metrics $g_t$ vary.  The bound we obtain depends on the size of the principal curvatures of the (projections to $M$) of the leaves of the foliation compared to the ambient Ricci curvature, as well as bounds on  the $g_t$ and their derivatives in time.

\subsection{Related Work} 

We now discuss some results related to this paper.  Density and equidistribution theorems for minimal hypersurfaces produced by the Almgren-Pitts min-max theory have been obtained for generic metrics by Irie-Marques-Neves \cite{imn} and Marques-Neves-Song \cite{mns}.  Recently Song-Zhou \cite{songzhou} showed that for generic metrics sequences of minimal hypersurfaces can ``scar" along stable minimal hypersurfaces, for example the ones considered in this paper. The proofs of the above results are based on the Liokumovic-Marques-Neves Weyl law for the Almgren-Pitts volume spectrum \cite{lmn}.  Ambrozio-Montezuma \cite{ambmont} also proved equidistribution results, by a somewhat different approach,  for minimal surfaces in metrics on the round 3-sphere that are local maxima for the Simon-Smith width within their conformal class. In contrast to the minimal surfaces considered in this paper, the minimal surfaces of most of the results mentioned in this paragraph	 are embedded and one expects them in general not to be local minima for the area functional.   

Recent work of Calegari-Marques-Neves \cite{cmn} considered minimal surfaces corresponding to the Kahn-Markovic surface subgroups from a dynamical perspective.  Given a closed hyperbolic 3-manifold, they define a functional on Riemannian metrics on that 3-manifold with sectional curvature at most $-1$ based on a renormalized count of stable properly immersed minimal surfaces with limit sets close to circles, and show that the constant curvature hyperbolic metric uniquely minimizes this functional. The proof of the rigidity part of their result--- that the constant curvature metric \textit{uniquely} minimizes the counting functional--- uses the Ratner-Shah theorem mentioned earlier.  This paper was inspired by and draws substantially from their ideas, especially Sections \ref{applications} and \ref{stronger}.  

In Section \ref{mainsection}, we produce the leaves of the foliations of Theorem \ref{mainintro} by solving specific asymptotic Plateau problems in $\tilde{M}$, and arguing that the solutions are unique.  The asymptotic Plateau problem in $\mathbb{H}^n$ for suitable boundary data at infinity was solved by Anderson \cite{and2}, and in simply connected Riemannian manifolds bi-Lipschitz equivalent to a metric with pinched negative sectional curvature by Bangert and Lang \cite{banglang}. 
Using their results might shorten our proof a little, but we prefer to construct the solutions to our asymptotic Plateau problems by hand. The main point for us is controlling the solutions to the relevant asymptotic Plateau problems as the metric varies and proving that the solutions are unique.     



By Theorem \ref{quantitativeintro} and Remark \ref{mozshah}, for any infinite sequence of closed leaves of our foliations with areas tending to infinity (these correspond to properly immersed totally geodesic surfaces in the hyperbolic metric) the sequence of probability measures $\mu_n'$ on $Gr_2(M)$ that they determine are weak-$*$ converging to the measure $\mu_g$ of Theorem \ref{quantitativeintro}.  One wonders what can be said about this measure in general.  Its regularity depends on the regularity of the conjugating map $\Phi$ in directions transverse to the leaves.   

The ergodic theory of foliations with negatively curved leaves has been studied (\cite{alvarezsuspension}, \cite{walchuk},\cite{zimmerfoliations}.) In \cite{alvarezsuspension}, Alvarez considers certain foliations, transverse to the fibers of $\mathbb{CP}^1$ bundles over a negatively curved surface, that arise from actions of the fundamental group of the surface on $\mathbb{CP}^1$. He shows that there is a unique probability measure to which metric disks tangent to the leaves and with radii tending to infinity converge, and that this measure is singular with respect to other measures natural to the dynamics of the foliation unless the surface in the construction had constant negative curvature. It would be interesting to determine whether the story is similar for the foliations of this paper.

\section{Outline and Acknowledgements} 
In Section \ref{mainsection} we prove Theorem \ref{mainintro}.  In Section \ref{applications} we apply Theorem \ref{mainintro}  to prove density and uniqueness results for stable properly immersed minimal surfaces in $M$. In  Section \ref{stronger} we prove some quantitative versions of the density results of Section \ref{applications} under the assumption that $M$ has no proper totally geodesic surfaces in its hyperbolic metric. In Section \ref{stabilitysect} we give an estimate for how fast the principal curvatures of the leaves of the foliations of Theorem \ref{mainintro} are changing as the metric varies.       
  
I would like to thank Fernando Al Assal, Clark Butler, Ilya Khayutin, Peter Sarnak, Andrea Seppi, Antoine Song, and Shmuel Weinberger for useful conversations and correspondence. I thank Alex Eskin for multiple helpful discussions related to Section \ref{stronger}.  I especially thank my advisor Fernando Coda Marques for his support and valuable suggestions, in particular related to Section \ref{stabilitysect}.

\section{Construction of the Foliations}  \label{mainsection} 

In this section we prove Theorem \ref{mainintro}.  Fix a closed hyperbolic 3-manifold M, and denote by $g_{hyp}$ the hyperbolic metric on $M$.  Let $\mathcal{P}$ be the set of totally geodesic planes in $\mathbb{H}^3$. By taking limit sets, there is a bijection between $\mathcal{P}$ and the set of round circles in $S^2 \cong \partial_\infty \mathbb{H}^3$. The lifts to $Gr_2(M)$ by their tangent planes of the projections of elements of $\mathcal{P}$ under the covering map are the leaves of a foliation of $Gr_2(M)$, which we denote by $\mathcal{F}$.  


Let $g$ be a metric on $M$.  Then there is an identification between the universal cover $(\tilde{M},\tilde{g})$ of $(M,g)$ and the universal cover $\mathbb{H}^3$ of $(M,g_{hyp})$, which is well-defined up to composing with covering transformations of $\mathbb{H}^3$. Since elements of the set $\mathcal{P}$ are invariant under covering transformations, taking the images of elements of $\mathcal{P}$ under such an identification gives a well-defined set of embedded planes in $(\tilde{M},\tilde{g})$, which we denote by $\mathcal{P}_{\tilde{g}}$.

\begin{defn} Let $g$ be a metric on $M$ with negative sectional curvature.  Consider the universal cover $(\tilde{M},\tilde{g})$ with the metric induced by $g$.  We say that an embedded surface $\Sigma$ in $(\tilde{M},\tilde{g})$ is $\epsilon$\textit{-subordinate} if it satisfies the following for every $p \in \Sigma$.  Let $\nu$ be the unit normal vector to $\Sigma$ at $p$ and let $A$ be the second fundamental form of $\Sigma$.  Then
	\begin{equation} \label{convex} 
	|A(p)|^2 < |Ric(\nu, \nu)| - \epsilon.   
	\end{equation} 

We say that $g$ is in $\Omega_\epsilon$ if it has negative sectional curvature and there is some $\epsilon>0$ such that for every $P \in \mathcal{P}_{\tilde{g}}$, there is a properly embedded $\epsilon$-subordinate minimal plane in $(\tilde{M},\tilde{g})$ at finite Hausdorff distance from $P$. 



\end{defn}  

\begin{rem}
For a plane $P \in \mathcal{P}$, it will either be the case that the (lift to $Gr_2(M)$ of the) projection of $P$ to $(M,g_{hyp})$ is dense in $Gr_2(M)$, or closes up to a properly immersed surface (\cite{ratner}, \cite{shah}.) Take a plane $P$ with dense projection to $Gr_2(M)$. Then, for $g$ a negatively curved metric on $M$, if there is an $\epsilon$-subordinate minimal plane $S$ in $(\tilde{M},\tilde{g})$ at finite Hausdorff distance from $P$ for some $\epsilon>0$, then $g \in \Omega_{\epsilon'}$ for any $\epsilon' < \epsilon$. That is, it suffices to check Equation (\ref{convex}) on a single embedded plane corresponding to an element of $\mathcal{P}$ with dense projection to verify membership in $\Omega_{\epsilon'}$ for $\epsilon'<\epsilon$.  This can be seen by approximating any $P' \in \mathcal{P}$ by orbits of $P$ under covering transformations, taking the corresponding sequence of minimal surfaces in $(\tilde{M},\tilde{g})$, and passing to a smooth subsequential limit as in the proof of Theorem \ref{main} below to produce a minimal plane satisfying Equation (\ref{convex}) for any $\epsilon'<\epsilon$.     
\end{rem}



Presumably $\Omega_{\epsilon}$ for $\epsilon$ close to zero contains more metrics than just those with sectional curvatures extremely close to -1.  It would be nice to have a better understanding of which metrics are contained in $\Omega_\epsilon$ for $\epsilon$ small.  Are all metrics that can be smoothly joined to the hyperbolic metric through metrics with sectional curvatures pinched between $-1$ and $-3/2$  contained in $\Omega_{\frac{1}{1000}}$, for example?  (Conceivably, the answer could depend on $M$.)   


We now prove the main theorem of the section.  It does not quite imply Theorem \ref{mainintro}, but we will explain at the end how the proof can be modified to give a proof of Theorem \ref{mainintro}

\begin{thm} \label{main}  
Let $\{g_t: t \in [0,1] \}$ be a smooth 1-parameter family of metrics on $M^3$, with $g_0=g_{hyp}$ and $g_t \in \Omega_\epsilon$  for some fixed $\epsilon>0$ and all $t$. Then there exists a constant $C$ depending only on the family of metrics such that for all $P  \in  \mathcal{P}_{\tilde{g}_t}$ there exists a properly embedded minimal plane $S_t$ in the universal cover $(\tilde{M},\tilde{g}_t)$ at a Hausdorff distance from $P$ of at most $C$, and that has the following properties: 
\begin{itemize} 
	\item $S_t$ is the unique properly embedded minimal plane at finite Hausdorff distance from $P$
	
	\item $S_t$ is absolutely minimizing.  
	
	\item The lifts of the $S_t$ to $Gr_2(\tilde{M})$ by their tangent planes are the leaves of a foliation $\tilde{\mathcal{F}_t}$ of $Gr_2(\tilde{M})$. 
	\end{itemize} 
 
  \noindent The diffeomorphisms of $Gr_2(\tilde{M})$ induced by covering transformations of $\tilde{M}$  send leaves to leaves, and $\tilde{\mathcal{F}_t}$ thus descends to a foliation $\mathcal{F}_t$ of $Gr_2(M)$.  Moreover, $\mathcal{F}_t$ and $\mathcal{F}$ are conjugate, in that  there is a homeomorphism 
\[
\Phi: Gr_2((M,g_{hyp})) \rightarrow Gr_2((M,g_t))
\]
 that maps leaves of $\mathcal{F}$ to leaves of $\mathcal{F}_t$. 
\end{thm}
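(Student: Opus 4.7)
The central mechanism is the barrier produced by the $\epsilon$-subordinate condition. If $S$ is an $\epsilon$-subordinate minimal surface in $(\tilde M, \tilde g_t)$, then the mean curvature $H_s$ of the parallel surface $S_s$ at signed normal distance $s$ satisfies
\[
\frac{dH_s}{ds}\bigg|_{s=0} \;=\; -(|A|^2 + Ric(\nu,\nu)) \;>\; \epsilon,
\]
so nearby parallel surfaces on both sides of $S$ have mean curvature vectors pointing toward $S$: they form a mean-convex foliation of a tubular neighborhood of $S$ of definite thickness $\delta = \delta(\epsilon, g_t)$. Because the family $g_t$ is smooth on $[0,1]$ and the pinching $\epsilon$ is uniform in $t$, $\delta$ can be chosen uniform in $t$. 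By the strong maximum principle, no other minimal surface can touch $S_s$ from the side toward $S$ without crossing $S$. This one mechanism will drive the uniform Hausdorff distance bound, the uniqueness assertion, and the non-intersection property needed for the foliation.

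Fix $t$ and $P \in \mathcal{P}_{\tilde g_t}$. The hypothesis $g_t \in \Omega_\epsilon$ already supplies some $\epsilon$-subordinate minimal plane $S_t$ at finite Hausdorff distance from $P$. To upgrade this to the uniform bound $C$ of the theorem, I would show that any such $S_t$ lies within a tubular neighborhood of the $\tilde g_t$-convex hull of $\partial_\infty P \subset S^2$ whose width depends only on $\epsilon$ and the curvature bounds of the family $g_t$; a point of $S_t$ far from this neighborhood would yield a contradiction via $\tilde g_t$-convex half-spaces sweeping in from infinity that touch $S_t$ at a point of positive mean curvature. For uniqueness, let $S_t'$ be any other properly embedded minimal plane at finite Hausdorff distance from $P$. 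Since $S_t$ and $S_t'$ share the same asymptotic limit set $\partial_\infty P$, the signed distance from $S_t'$ to $S_t$ is bounded and attains its extrema at interior points; at such a first-contact point, $S_t'$ is tangent to a parallel surface of $S_t$ from the mean-convex side, forcing $S_t' = S_t$ by the maximum principle. The absolutely minimizing property then follows by realizing $S_t$ as a smooth limit of area-minimizers for the Plateau problem with boundary data $\partial D_R \subset P$ for $R \to \infty$, using the uniform Hausdorff bound to pass to the limit.

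For the foliation structure I would first establish continuous dependence of $S_t(P)$ on $P$ in the Grassmannian topology. If $P_n \to P$, the uniform Hausdorff bound together with interior curvature estimates for stable minimal surfaces yields a smooth subsequential limit of $S_t(P_n)$ that is a minimal plane at finite Hausdorff distance from $P$, hence by uniqueness equals $S_t(P)$. The same sliding argument rules out tangential intersections of $S_t(P)$ and $S_t(P')$ for distinct $P, P'$ at a common point of $\tilde M$: such a tangency would propagate via first contact and asymptotic identification to force $P = P'$. Equivariance under $\pi_1(M)$ is automatic from uniqueness. The conjugating map $\Phi$ is then constructed by lifting to an equivariant $\tilde\Phi: Gr_2(\tilde M) \to Gr_2(\tilde M)$ that sends a tangent plane $T_p P$ to $T_q S_t(P)$, where $q$ is the closest-point projection of $p$ onto $S_t(P)$ in $\tilde g_t$; the barrier makes this projection well-defined, and continuity transverse to the leaves follows from the continuous dependence of $S_t(P)$ on $P$.

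The main obstacle will be the uniformity of all these estimates in both $P$ and $t$. The barrier-thickness $\delta$ must be bounded below uniformly over $[0,1] \times \mathcal{P}$, which is delicate because the $\epsilon$-subordinacy is a pointwise condition and the principal curvatures of the putative $S_t(P)$ are not controlled a priori in $P$ or $t$ without appealing to the construction itself. A second delicate point is the transverse continuity of $\Phi$: verifying that $\tilde\Phi$ is a true homeomorphism, rather than merely a leafwise-continuous bijection, requires control on how $S_t(P)$ converges to $S_t(P')$ near any chosen interior point of $\tilde M$ as $P \to P'$, which demands asymptotic estimates for the leaves near infinity that go beyond the bare Hausdorff distance bound.
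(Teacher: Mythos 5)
Your proof strategy shares the right central idea with the paper — the mean-convex tubular barrier produced by $\epsilon$-subordinacy — but the overall architecture is genuinely different, and the difference matters. The paper proves the theorem by a method of continuity: it does a finite induction in $t$, taking as inductive hypotheses two properties of the $S_{t_0}$ (pairwise disjointness for disjoint boundary circles, and smooth dependence on the boundary circle), and constructs $S_t$ for $t$ slightly larger than $t_0$ as a limit of solutions to Plateau problems for circles $\partial B(s)\subset S_{t_0}$. The key point is that the barrier used to trap those Plateau solutions is built around the \emph{previous} minimal plane $S_{t_0}$, which remains approximately minimal (and hence has strictly mean-convex parallel surfaces) in $\tilde g_t$ for $t$ close to $t_0$. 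You instead try to work directly at a fixed $t$ using only the $\Omega_\epsilon$ hypothesis, taking Plateau problems for circles on $P$ itself. This is a different route, and it runs into the following specific gaps.

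First, the trapping/uniform-Hausdorff step. You propose to control $S_t$ via ``$\tilde g_t$-convex half-spaces sweeping in from infinity.'' In variable negative curvature there is no supply of totally geodesic or convex hypersurfaces to sweep with; and the geodesic plane $P$ is not $\tilde g_t$-minimal, so its parallel surfaces have no sign on their mean curvature. The paper's uniform constant $C$ emerges precisely from the continuation: each inductive step moves $S_t$ at most $\xi = O(\delta)$ away from $S_{t_0}$, and these displacements telescope over the finitely many steps. A direct argument at fixed $t$ would need an entirely different mechanism for the uniform bound, and you do not supply one. Related is a circularity: to show $S_t$ is absolutely minimizing via Plateau you must know the Plateau solutions converge to $S_t$, which requires trapping; but the barrier you want to use is around $S_t$ itself, which only controls surfaces known to lie near $S_t$, not the Plateau solutions whose location is a priori unknown.

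Second, the first-contact arguments. You assert the signed distance between $S_t$ and $S_t'$ ``attains its extrema at interior points.'' On a noncompact $\tilde M$ this is false in general; the paper's Lemma \ref{lim} handles this by descending to a compact fundamental domain via covering transformations and extracting a smooth subsequential limit. This step is genuinely needed and missing from your sketch. Similarly, ruling out tangential intersections of $S_t(P)$ and $S_t(P')$ when $\partial_\infty P$ and $\partial_\infty P'$ meet is done in the paper by a careful casework using the local normal form for intersections of minimal planes in a 3-manifold (Lemma \ref{normalform}), the absolutely-minimizing property, and the auxiliary foliation $\{S_t(x)\}$ from Lemma \ref{vertfoliation} (which itself relies on the inductive Property 1). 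Your phrase ``propagate via first contact and asymptotic identification'' does not engage with the branching structure at a non-transverse intersection, which is where the real work lies. To your credit, you flag the uniformity and transverse-continuity issues as delicate; but you should note that the continuation structure of the paper's argument is precisely the device that resolves them, and a direct proof at fixed $t$ would need to find a substitute.
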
 

We say in this paper that a minimal surface is \textit{absolutely minimizing} if, for every piecewise-differentiable closed curve on the surface that bounds a disk $D$ on the surface, the area of $D$ is less than or equal to that of any other smoothly embedded disk in the ambient space bounding $\partial D$.  

A 1-parameter family of Riemannian metrics as in the theorem gives a map 
\[
M \times [0,1] \rightarrow Sym^2(T^*M).  
\]
We say that the family of metrics is smooth if this map is smooth.

	

	
	Let $\{g_t: t \in [0,1]\}$ be a smooth 1-parameter family of metrics on $M^3$, with $g_0=g_{hyp}$ and $g_t \in \Omega_\epsilon$ for all $t$. We will prove Theorem \ref{main} by a finite induction.  Suppose that $g_{t_0}$ satisfies the conclusion of the theorem for some $t_0$.  In the proof, we will use the following two properties of the $S_{t}$ at $t=t_0$, the first of which we will assume at $t_0$ and verify in the inductive step and the second of which follows from the existence of the conjugating map $\Phi$ in the theorem and standard elliptic PDE theory.

	   \begin{itemize} 
	    \item [\textbf{Property 1}] Suppose lifts of $S_t$ and $S_t'$ are leaves of $\tilde{\mathcal{F}}_t$ that correspond to totally geodesic planes $S$ and $S'$ in $\mathcal{P}$.  If $S$ and $S'$ have disjoint boundary circles at infinity, then $S_t$ and $S_t'$ are disjoint.  
	    \item	[\textbf{Property 2}] If $S_n$ is a sequence of totally geodesic planes that converges to $S$ on compact subsets, then the corresponding sequence of minimal planes in $(\tilde{M},\tilde{g_{t}})$ smoothly converges, uniformly on compact subsets, to the minimal plane in $(\tilde{M},\tilde{g}_{t})$ corresponding to $S$.    
	 
\end{itemize}

	
\subsection{Outline}  We carry out the induction in three steps.  First we construct the minimal planes $S_t$ in the universal cover as limits of solutions to Plateau problems for a sequence of circles going off to infinity, roughly following  the approach introduced by \cite{and2} to solving the asymptotic Plateau problem in negative curvature.  Next, using the existence of mean-convex tubular neighborhoods of the $S_{t_0}$ guaranteed by $g_{t_0}$'s membership in $\Omega_\epsilon$, we prove that the $S_t$ are unique.  Finally, based on the strong restrictions on how minimal surfaces can intersect in three dimensions, we prove that the lifts of the $S_t$ to $Gr_2(\tilde{M})$ by their tangent planes give a foliation of $Gr_2(\tilde{M})$.

	\subsection{Mean-Convex Neighborhoods}  


	 Let $S_t$ be an $\epsilon$-subordinate properly embedded minimal disk in $(\tilde{M}, \tilde{g}_t)$ as in the statement of the theorem.  Along any normal geodesic ray $\gamma$ from $S_t$ parametrized by arc-length and within the normal injectivity radius, the signed mean curvatures $m$ of the parallel surfaces satisfy the following equation:   
	 \begin{equation} \label{riccati} 
	 m'((\gamma(s)) = -|A(\gamma(s))|^2 -\text{Ric} (\dot{\gamma}(s) ,\dot{\gamma}(s)),  
	 \end{equation} 
	 
	 \noindent where $A(\gamma(s))$ is the second fundamental form of the  signed-distance-$s$ parallel surface at $\gamma(s)$.  
	 This can be obtained by taking the trace of Equation (2) in Proposition 3.2.11 of \cite{petersen}. (See also Gromov's survey article \cite{gromovsignandgeometricmeaning} which develops some of the themes of modern geometry through this and related ``tube-formulas.")  
	 The next lemma will be used at several points below.    
	 
	 \begin{lem} \label{lemconvex} 
	 	 There is some $\xi$ depending only on $\epsilon$ and the family $g_t$ such that the parallel signed distance-$r$ surfaces of the $S_{t}$ have mean curvature greater than $\frac{\epsilon}{2} r$ if $0<r<\xi$ and less than $\frac{\epsilon}{2} r$ if $-\xi < r<0$. 
	 	\end{lem}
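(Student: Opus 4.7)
The plan is to apply the Riccati equation (\ref{riccati}) at $s=0$ and then upgrade the resulting pointwise first-order inequality to a uniform inequality on an interval $(-\xi, \xi)$ by controlling how fast $m'$ varies along the normal geodesic.

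First I would compute $m'(0)$ at an arbitrary point $p \in S_t$. Since $S_t$ is minimal we have $m(0)=0$, and since $g_t$ has negative sectional curvature we have $Ric(\nu,\nu) < 0$, so $|Ric(\nu,\nu)| = -Ric(\nu,\nu)$. Evaluating (\ref{riccati}) at $s=0$ and invoking the $\epsilon$-subordinate hypothesis (\ref{convex}) gives
\[
m'(0) \;=\; -|A(p)|^2 - Ric(\nu,\nu) \;=\; |Ric(\nu,\nu)| - |A(p)|^2 \;>\; \epsilon.
\]
So already at first order we get a slope strictly larger than $\epsilon/2$; the remaining task is to make this lower bound persist on a uniform interval of $s$.

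Next I would show that $m'(s)$ is uniformly Lipschitz in $s$ near $0$, with a constant $C$ depending only on $\epsilon$ and the family $\{g_t\}$. By (\ref{riccati}) this reduces to controlling the variation of $|A(\gamma(s))|^2$ and of $Ric(\dot\gamma(s),\dot\gamma(s))$. The Ricci piece is controlled by $\sup_t \|\nabla Ric_{g_t}\|_{L^\infty(M)}$, which is finite and uniform in $t$ because $\{g_t\}$ is a smooth family of metrics on the compact manifold $M \times [0,1]$. The second fundamental form of the parallel surfaces evolves along the normal geodesic by a matrix Riccati equation driven by $A^2$ and the ambient Riemann tensor; given the uniform bound $|A|^2 < \|Ric_{g_t}\|_\infty$ at $s=0$ (from the $\epsilon$-subordinate condition) together with uniform bounds on the ambient curvature tensor, this equation produces uniform Lipschitz control on $|A(\gamma(s))|^2$ for $|s|$ smaller than some $\xi_0$. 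The same bound on $|A|$, via Rauch comparison, yields a uniform lower bound on the normal injectivity radius of $S_t$, so the parallel surfaces exist and remain smooth for $|s|<\xi_0$. Setting $\xi := \min(\xi_0, \epsilon/(2C))$ gives $m'(s) > \epsilon/2$ for all $|s|<\xi$, and integrating from $0$ to $r$ yields the two inequalities claimed by the lemma.

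The main obstacle is ensuring that the Lipschitz constant $C$ and the radius $\xi_0$ are genuinely uniform in both the point $p \in S_t$ (which lives in the universal cover) and the parameter $t\in[0,1]$. Uniformity in $p$ is not automatic because $S_t$ is not invariant under any nontrivial deck transformation, but every geometric quantity entering the argument --- $|A|^2$, $|Ric|$, $|\nabla Ric|$, $|R|$, $|\nabla R|$ --- is either bounded by the $\epsilon$-subordinate hypothesis or is the pullback to $\tilde M$ of a continuous function on the compact base $M$, so the relevant constants descend uniformly. Uniformity in $t$ then follows from the smoothness of $\{g_t\}$ on $M \times [0,1]$, which gives uniform $C^k$-bounds for every $k$.
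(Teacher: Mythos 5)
Your proof is correct, but it takes a genuinely different route from the paper's. The paper argues by contradiction and compactness: it assumes the conclusion fails along sequences $\{t_n\}$, $\{r_n\}$, $\{x_n\}$ with $r_n \to 0$, translates the points $x_n$ back into a fixed compact fundamental domain by deck transformations, uses the uniform bound on $|A|$ (coming from $\epsilon$-subordination and compactness of $M$) to extract a smooth subsequential limit minimal disk $D$, observes that $D$ inherits $\epsilon$-subordination so $m'(0) > \epsilon$ on $D$, and then applies the mean value theorem to the pre-limit surfaces to manufacture points where $m' \le \epsilon/2$ that survive the smooth limit, yielding a contradiction. You instead argue directly and quantitatively: you evaluate the Riccati equation (\ref{riccati}) at $s=0$ to get $m'(0) > \epsilon$, then establish a uniform Lipschitz bound $|m'(s) - m'(0)| \le C|s|$ by controlling the $s$-variation of the two terms in (\ref{riccati}) --- the Ricci term via $\|\nabla \mathrm{Ric}\|_\infty$ on the compact base, and the $|A(\gamma(s))|^2$ term via the matrix Riccati ODE for the shape operator with bounded initial data and bounded driving term --- and then take $\xi = \min(\xi_0, \epsilon/(2C))$. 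Both proofs rest on the same two ingredients (the Riccati formula and uniformity of curvature bounds coming from compactness of $M$), but your version is more explicit and gives a concrete $\xi$; the paper's version avoids having to spell out the ODE estimate for the shape operator by delegating that control to the smooth-convergence machinery. The one place where your argument is lighter on detail than a careful write-up would require is precisely the sentence asserting that the matrix Riccati equation ``produces uniform Lipschitz control on $|A(\gamma(s))|^2$''; this is true, but one should note that it follows from short-time boundedness of the solution to $S' = -S^2 - R_\nu$ with uniformly bounded initial condition and bounded coefficient, which in turn bounds $S'$ and hence the derivative of $|A|^2$. With that observation made explicit, your proof is complete.
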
 
	 	\begin{proof}

	 		 First note that we have uniform bounds on the $L^\infty$ norm of the Ricci curvature tensor over all $g_t$.  Since $S_{t}$ is $\epsilon$-subordinate, there is thus by (\ref{convex}) a uniform bound on the magnitude of its second fundamental form, depending only on $\epsilon$ and bounds on the $g_t$.   The upper bound on the magnitude of the second fundamental form and the uniform bounds on $\tilde{g}_t$ and its derivatives imply a lower bound on the normal injectivity radius of $S_{t}$ uniform over all $S_t$--- i.e., a lower bound for an $\epsilon'$ such that the normal exponential map on the normal bundle to $S_t$ is injective restricted to $S_t \times (-\epsilon',\epsilon')$.    
	 		 
	 	Now suppose the statement of the lemma were false, and let $\{t_n\}$, $\{r_n\}$, and $\{x_n\}$ be sequences of times, signed-distances, and points on $S_{t_n}$ such that:  
	 	
	 	\begin{itemize} 
	 	\item the distance-$r_n$ surface to $S_{t_n}$ has mean curvature less than $\frac{\epsilon}{2} r_n$ if $r_n>0$ or greater than $\frac{\epsilon}{2}r_n$ if $r_n<0$ at the point that normally projects to $x_n$
	 	\item $|r_n| \rightarrow 0$ 
	 	\item $\{t_n\}$ converges to some time $t$ (where we've passed to a subsequence if necessary.)      	
	 		\end{itemize}
	 		
	 	\noindent Let $K$ be a compact set containing a fundamental domain for the action of $\pi_1(M)$ on $\tilde{M}$, and for each $x_n$, let $\gamma_n$ be a covering transformation of $\tilde{M}$ such that $\gamma_n \cdot x_n \in K$. By passing to a subsequence we can assume that $\gamma_n \cdot x_n$ converges to $x$.  By the uniform bound on the second fundamental forms of the $S_{t_n}$, we can pass to a subsequence of the $\gamma_n \cdot S_{t_n}$ that graphically converges (and thus, by standard elliptic PDE theory, smoothly converges) in a neighborhood of $x$ to a $\tilde{g}_t$-minimal disk $D$ containing $x$.  Since this disk inherits the property of being $\epsilon$-subordinate from the $\gamma_n \cdot S_{t_n}$ of which it was a smooth limit, Equation (\ref{riccati}) implies that the derivative of the signed mean curvatures of the parallel distance-$r$ surfaces to $D$ at $r=0$ is greater than $\epsilon$ at every point in the interior of $D$. 
	 	
	 	 By passing to a subsequence, we can assume that all of the $r_n$ are either positive or negative; the argument is the same in both cases so assume that all are positive. By the mean value theorem there is a sequence of $r_n'$ such that the derivative of the mean curvature of the parallel distance-$r$ surfaces to $\gamma_n \cdot S_{t_n}$ along the normal geodesic to $\gamma_n \cdot x_n$ is less than $\epsilon/2$  at the distance-$r_n'$ surface, where $0<r_n'  < r_n$. Since neighborhoods of $\gamma_n \cdot x_n$ in  $\gamma_n \cdot S_{t_n}$ are smoothly converging to $D$, their parallel distance-$r$ surfaces are smoothly converging to those of $D$.  This implies that the derivative of the mean curvature of the parallel distance-$r$ surfaces to $D$ along the normal geodesic at $x$ is less than or equal to $\epsilon/2$ at $r=0$, which contradicts the previous paragraph.

	 
	\end{proof} 
	
	  Let $S_{t_0}^r$ be the parallel surface at signed-distance $r$ from $S_{t_0}$.  Then by the previous lemma for $\delta$ sufficiently small (and independent of $t_0$), $S_{t_0}^r$ will remain mean-convex when considered as a surface inside $(\tilde{M},\tilde{g}_t)$, for $t \in (t_0, t_0 + \delta)$ and $\frac{\xi}{4} < |r|< \xi$. For $t \in (t_0, t_0 + \delta)$, we now construct the $S_{t}$. At several junctures below, we will put further restrictions on the size of $\delta$ that only depend on $\epsilon$ and the family of metrics.  
	 
	 \subsection{Controlled Solutions to Plateau Problems}
	 Fix a point $p$ on $S_{t_0}$ and let $B(s)$ be the metric disk in $S_{t_0}$ of radius $s$ centered at $p$, where $S_{t_0}$ has the metric induced from $\tilde{g}_{t_0}$. Because $S_{t_0}$ is minimal and $\tilde{g}_{t_0}$ has negative sectional curvature, $S_{t_0}$ has negative curvature and the exponential map  $T_pS_{t_0} \rightarrow S_{t_0}$ is a diffeomorphism. The boundaries $\partial B(s)$ are therefore embedded circles, and so 
	 we can solve the Plateau problem for  $\partial B(s)$ in $(\tilde{M},\tilde{g}_{t})$ to find an embedded $\tilde{g}_t$- minimal disk $D(s)$ that bounds  $\partial B(s)$, such that every other embedded disk bounding  $\partial B(s)$ has area greater than or equal to that of $D(s)$ \cite[Chapter 4]{coldmini}.

	 \begin{lem}  \label{trapping} 
	 	
	 	$D(s)$ is contained in the region bounded by $S_{t_0}^r$ and $S_{t_0}^{-r}$ for $\xi/4<r<\xi$.
	 \end{lem}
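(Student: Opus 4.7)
The plan is a strong maximum principle argument, with the parallel surfaces $S_{t_0}^r$, $\xi/4 < |r| < \xi$, serving as mean-convex barriers in $(\tilde{M},\tilde{g}_t)$. As noted just before the lemma, for $\delta$ small these surfaces are strictly mean-convex in $(\tilde{M},\tilde{g}_t)$ for $t \in (t_0, t_0 + \delta)$, with their mean curvature vectors pointing back toward $S_{t_0}$ (the direction is fixed by the sign conventions in Lemma \ref{lemconvex}).

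Let $r^* = \sup\{\mathrm{sgn\,dist}(q,S_{t_0}) : q \in D(s)\}$ taken on (say) the positive-signed side; this supremum is attained by compactness of $D(s)$. Suppose for contradiction that $r^* > \xi/4$. First I would rule out $r^* \ge \xi$ by invoking the absolutely area-minimizing property: if $D(s)$ passed outside $S_{t_0}^{r}$ for some $r \in (\xi/4, \xi)$ near $\xi$, the portion of $D(s)$ in the signed-distance-$>r$ region could be replaced by its nearest-point projection onto $S_{t_0}^{r}$ (well-defined throughout the embedded $\xi$-tubular neighborhood). The strict mean-convexity of $S_{t_0}^{r}$, whose magnitude is controlled via equation (\ref{riccati}) and the $\epsilon$-subordinate condition, makes this projection strictly area-decreasing on any nontrivial portion, contradicting absolute minimization. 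Hence $r^* < \xi$.

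Now the strong maximum principle applies at $r^* \in (\xi/4,\xi)$: the supremum is attained at an interior point $q^* \in D(s)$, since $\partial D(s) \subset S_{t_0}$ has signed distance zero. At $q^*$, $D(s)$ is tangent to the barrier $S_{t_0}^{r^*}$ and lies locally on the $S_{t_0}$-side, which is precisely the side into which the mean curvature vector of $S_{t_0}^{r^*}$ points. Tangency from the mean-convex side between a minimal surface and a strictly mean-curved barrier is forbidden by the Hopf strong maximum principle (locally they would have to coincide, which is impossible since $D(s)$ has mean curvature zero while $S_{t_0}^{r^*}$ has mean curvature at least of order $\epsilon r^*$). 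The symmetric argument on the negative side rules out signed distance $< -\xi/4$, so $D(s)$ is contained in the region bounded by $S_{t_0}^r$ and $S_{t_0}^{-r}$ for every $r \in (\xi/4, \xi)$, as claimed.

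The main obstacle I expect is securing the a priori bound $|r^*| < \xi$; without it, a putative interior tangency could occur outside the embedded tube where the barriers $S_{t_0}^r$ are even defined. The cut-and-project argument above is the cleanest approach I see, but an alternative route would bound the diameter of $D(s)$ via the monotonicity formula together with the area bound $|D(s)| \le |B(s)|$ coming from absolute minimization, reducing the situation to a compact region where the tubular coordinates are valid.
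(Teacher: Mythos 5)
Your step 3 (the interior tangency/maximum principle argument once a touching parallel surface is located) is correct and matches the final step of the paper's proof, and your observation that mean-convexity of the level sets makes the normal projection area-decreasing within the Fermi tube is also sound. The genuine gap is exactly the one you flagged: your argument for $r^*<\xi$ is circular. The nearest-point projection onto $S_{t_0}^{r}$ is only well-defined and area-decreasing on the portion of $D(s)$ contained in the embedded $\xi$-tube of $S_{t_0}$, but whether $D(s)$ stays in that tube is precisely what you are trying to prove. Your fallback via monotonicity plus $|D(s)|\le |B(s)|$ also does not close the gap: that area bound grows without limit as $s\to\infty$, so it does not yield any diameter bound on $D(s)$, and a single escaping point carrying only order-$\xi^2$ mass is perfectly compatible with it. So as written the proposal does not secure the a priori confinement.

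The paper avoids this obstacle by a genuinely different device. Rather than working with the single barrier $S_{t_0}$ and its parallel surfaces, it invokes Lemma \ref{vertfoliation} to get the full family $\{S_{t_0}(x)\}_{x\in\mathbb{R}}$ of $\tilde{g}_{t_0}$-minimal planes foliating all of $\tilde{M}$, and then, for the fixed $r\in(\xi/4,\xi)$, uses the corresponding family $\{S_{t_0}^{r}(x)\}_x$ of parallel surfaces, each of which is strictly mean-convex in $\tilde{g}_t$. These barriers fill the entire ambient space, so there is no ``escape'' issue to resolve. Because $D(s)$ is compact and the $S_{t_0}^{r}(x)$ are at uniformly bounded Hausdorff distance from the geodesic planes $S(x)$, only finitely much of the foliation can meet $D(s)$; one takes the supremal $x'$ for which $S_{t_0}^{r}(x')$ touches $D(s)$ and gets a one-sided tangency with a strictly mean-convex surface there, hence a contradiction. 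The moral difference is that the paper slides an exhausting family of global barriers in from infinity, whereas you try to cap a single local barrier and then must separately control how far out $D(s)$ can go --- the step that remains open in your write-up. If you want to repair your version, the cleanest fix is essentially to import Lemma \ref{vertfoliation} and switch to the sliding-barrier argument; the cut-and-project idea, while correct where it applies, cannot by itself establish the needed a priori confinement.
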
 
	 
	 We first prove another lemma. Let $S$ be the geodesic plane in $\mathbb{H}^3$ that corresponds to $S_{t_0}$. The circles in $S^2$ parallel to the boundary at infinity of $S$ form a foliation of $\partial_{\infty}\mathbb{H}^3 \cong S^2$ minus two points.  Let $\{S(x): x \in \mathbb{R} \}$ be the foliation of $\mathbb{H}^3$ by totally geodesic planes whose limit sets are the circles in $S^2$  parallel to $\partial_\infty S$, parametrized so that $S(0)=S$, and let $S_{t_0}(x)$ be the corresponding minimal planes in $(\tilde{M},\tilde{g}_{t_0})$.  
	 
	 \begin{lem} \label{vertfoliation} 
	 	 The $S_{t_0}(x)$ are the leaves of a foliation of $\tilde{M}$.
	 	\end{lem}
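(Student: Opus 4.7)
The plan is to verify three properties of the family $\{S_{t_0}(x)\}_{x \in \mathbb{R}}$: pairwise disjointness, $C^\infty_{\text{loc}}$-continuous dependence on $x$, and exhaustion (every point of $\tilde M$ lies on some leaf). Combined with the tubular neighborhoods supplied by Lemma \ref{lemconvex}, these three properties give the local product structure defining a foliation. Pairwise disjointness is immediate from Property 1, since the boundary circles $\partial_\infty S(x)$ and $\partial_\infty S(y)$ are disjoint for $x \neq y$. Continuous dependence is immediate from Property 2 applied to the smooth family $x \mapsto S(x)$ of totally geodesic planes in $\mathbb{H}^3$.

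The substantive step is exhaustion. Fix $p \in \tilde M$. Each $S_{t_0}(x)$ is a properly embedded plane in $\tilde M \cong \mathbb{R}^3$, so Jordan--Brouwer separation gives two components $\tilde M \setminus S_{t_0}(x) = A_x^+ \sqcup A_x^-$. I would label these consistently using the family itself: let $A_x^+$ be the component containing $S_{t_0}(y)$ for $y$ slightly larger than $x$. By the disjointness of the $S_{t_0}(y)$ from $S_{t_0}(x)$ together with the continuity in $y$, in fact $A_x^+$ contains $S_{t_0}(y)$ for every $y > x$ and $A_x^-$ contains $S_{t_0}(y)$ for every $y < x$; this labeling varies continuously in $x$. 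Define $\sigma(x) \in \{+, -, 0\}$ according to which of $A_x^+$, $A_x^-$, or $S_{t_0}(x)$ contains $p$. Away from $\sigma^{-1}(0)$ the function $\sigma$ is locally constant, so if $\sigma$ takes both signs $+$ and $-$ on $\mathbb{R}$, some $x_0$ has $\sigma(x_0) = 0$ and we are done.

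To see that $\sigma$ does flip sign, I would use that in $\mathbb{H}^3$ the planes $S(x)$ escape to the two distinct points of $\partial_\infty \mathbb{H}^3 \setminus \partial_\infty S$ as $x \to \pm \infty$. Combined with the uniform Hausdorff bound $C$ between $S_{t_0}(x)$ and $S(x)$, the planes $S_{t_0}(x)$ eventually leave any compact set in $\tilde M$ containing $p$, exiting on the $A_x^+$-side as $x \to +\infty$ and the $A_x^-$-side as $x \to -\infty$. This forces $\sigma(x) = -$ for $x \gg 0$ and $\sigma(x) = +$ for $x \ll 0$. The main obstacle is making this escape-to-infinity argument precise: verifying that the $C$-thickening of $S(x)$ in $\tilde M$ eventually lies on the ``correct'' far side of $p$, which uses both the ordering structure of the $S_{t_0}(x)$ established above and the specific geometry of how $S(x)$ converges to a point of $\partial_\infty \mathbb{H}^3$ in the compactification.
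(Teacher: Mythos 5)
Your proof is correct and follows essentially the same strategy as the paper: disjointness from Property~1, $C^\infty_{\mathrm{loc}}$-continuity from Property~2, and exhaustion via an intermediate-value argument that uses the uniform Hausdorff bound between $S_{t_0}(x)$ and $S(x)$ to guarantee the planes eventually pass any given point. The paper packages the exhaustion step as ``take $x_q^+ = \inf\{x : S_{t_0}(x) \text{ is above } q\}$ and use continuity to show $q \in S_{t_0}(x_q^+)$,'' whereas you phrase it via the sign function $\sigma$ and Jordan--Brouwer separation, but these are the same argument in different clothing.
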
 
	 	
	 	\begin{proof} 
	 		 
	 		 First of all by Property 1 the $S_{t_0}(x)$ are disjoint. 
	 		 Suppose for contradiction that there is some point $q$ that is not contained in any $S_{t_0}(x)$.  Let $x_q^+$ be the infimum of the set of all $x$ such that $S_{t_0}(x)$ is above $q$.  
	 		 This set is nonempty since the $S_{t_0}(x)$ are at uniformly bounded Hausdorff distance from the $S(x)$ considered as subspaces of $(\tilde{M}, \tilde{g}_t)$, so $x_q^{+}$ is well-defined.  If $S_{t_0}(x_q^+)$ did not contain $q$, then since the $S_{t_0}(x)$ vary smoothly by Property 2 above, it must be above $q$ and we could therefore produce $S_{t_0}(x)$ above $q$ with $x< x_q^+$ for a contradiction.   
	 		 It follows that $q$ is contained in $S_{t_0}(x_q^+)$.  The existence of local product charts follows from Property 2, and so the $S_{t_0}(x)$ are the leaves of a foliation.

	 	\end{proof} 
	 	
	 	\noindent We now give the proof of Lemma \ref{trapping}.

	 \begin{proof} 	 
	                                                                                               Suppose for contradiction that $D(s)$ is not contained in the region bounded by $S_{t_0}^r$ and $S_{t_0}^{-r}$. Then either $D(s)$ has points above $S_{t_0}^{r}$ or below $S_{t_0}^{-r}$--- assume that the first is true since the proof in the second case is the same. Note that the parallel signed-distance-$r$ surfaces $S_{t_0}^r(x) $ for the $S_{t_0}(x) $ in $(\tilde{M},\tilde{g_{t_0}})$ are mean-convex in $(\tilde{M},\tilde{g_t})$ since $\frac{\xi}{4} < r< \xi$ and are the leaves of a foliation of $\tilde{M}$ for fixed $r$, since the  $S_{t_0}(x)$ are the leaves of a foliation of $\tilde{M}$ by the previous lemma.  It follows that the set of $x>0$ such that $S_{t_0}^r(x)$ intersects $D(s)$ is non-empty. 
	                                                                                               
	                                                                                               Furthermore, since the $S_{t_0}^r(x)$ are at uniformly bounded Hausdorff distance from the $S(x)$ considered inside $\{\tilde{M}, \tilde{g}_t \}$, for $x$ sufficiently large the intersection of $S_{t_0}^r(x)$ and $D(s)$ will be empty. 
	                                                                                      Let $x'$ be the largest $x$ such that  $S_{t_0}^r(x)$  intersects $D(s)$. Then since $D(s)$ is contained on one side of $S_{t_0}^r(x')$ except at the non-empty set of points where they intersect, the mean convexity of $S_{t_0}^r(x')$ gives a contradiction.

	 \end{proof} 
	 
	
	The next lemma follows from Schoen's curvature estimate for stable minimal surfaces \cite[Theorem 3]{schoencurvestimate}.
	 \begin{lem} \label{schoen} 
	Let $\rho>0$ be less than the injectivity radius of any $(M,g_t)$, and let $x$ be a point on a stable $\tilde{g}_t$-minimal embedded disk $D \subset \tilde{M}$ at a $\tilde{g}_t$-distance of at least  $d$ from the boundary of $D$. Then there is some constant $C$  such that the $L^\infty$-norm of the second fundamental form of $D$ at p is bounded above by $C$.  The constant $C$ depends only on $\rho$, $d$, and bounds on the $g_t$ (specifically, bounds on the $L^\infty$ norms  of the curvature tensor and its covariant derivative.)

	\end{lem}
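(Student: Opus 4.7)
The plan is to reduce the statement to a direct application of Schoen's curvature estimate from \cite{schoencurvestimate}, with all constants made uniform in $t$ using compactness.

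First I would recall the form of Schoen's estimate: for a stable orientable two-sided minimal immersion $D$ into a Riemannian 3-manifold $N$, and for a point $x \in D$ lying at intrinsic distance at least $d$ from $\partial D$ and such that the geodesic ball $B_N(x,\rho)$ is contained in a coordinate chart with controlled geometry (e.g.\ $\rho$ less than the injectivity radius of $N$), one has a pointwise estimate
\[
|A(x)|^2 \le \frac{C_0}{\min(\rho,d)^2},
\]
where $C_0$ depends only on $\rho$, $d$, and pointwise bounds on the curvature tensor $R_N$ and its covariant derivative $\nabla R_N$ over $B_N(x,\rho)$. The proof in \cite{schoencurvestimate} is entirely local and only uses the ambient geometry inside such a ball, so the constant $C_0$ can be taken uniform over any family of ambient metrics whose curvature tensors and first covariant derivatives are uniformly bounded on balls of the given size.

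Next I would collect the uniform bounds from the hypotheses. Since $M$ is closed and $\{g_t\}_{t \in [0,1]}$ is a smooth family, $M \times [0,1]$ is compact; hence the curvature tensors $R(g_t)$ and their covariant derivatives $\nabla R(g_t)$ have uniform $L^\infty$ bounds in $t$, and the injectivity radii $\mathrm{inj}(M,g_t)$ are bounded below by some $\rho_0 > 0$. We take $\rho < \rho_0$ in the statement; these bounds pull back to the universal cover $(\tilde{M},\tilde{g}_t)$, which moreover has injectivity radius equal to $+\infty$ (it is simply connected with nonpositive sectional curvature), so geodesic balls of radius $\rho$ in $\tilde{M}$ are always embedded.

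Finally I would apply the local Schoen estimate to the stable minimal disk $D \subset (\tilde{M},\tilde{g}_t)$ at the point $x$, which lies at distance at least $d$ from $\partial D$, obtaining $|A|(x) \le C$ for a constant $C = C(\rho, d, \sup_t \|R(g_t)\|_{L^\infty}, \sup_t \|\nabla R(g_t)\|_{L^\infty})$, as claimed. There is no real obstacle here beyond verifying that Schoen's argument is indeed local and produces a constant with the stated dependencies; the only mild subtlety is that the original statement is typically given in a fixed ambient manifold, so one must either cite a version phrased for general ambient bounds or run through Schoen's proof noting that every step (the logarithmic cutoff test function in the stability inequality, the Simons identity, the Moser iteration) depends only on bounds on $R$ and $\nabla R$ on a ball of radius $\rho$ about the point.
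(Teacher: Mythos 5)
Your proposal is correct and takes essentially the same approach as the paper, which simply cites Schoen's curvature estimate \cite[Theorem 3]{schoencurvestimate} and leaves the uniformity of the constant implicit. You have spelled out the compactness argument giving uniform bounds on $R(g_t)$, $\nabla R(g_t)$, and the injectivity radius, which is exactly the content the paper takes for granted.
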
 
	
	 

	 We require two more lemmas to get the control on the $D(s)$ we will need to pass to a limit.

	 \begin{lem} \label{normalvector} 
	 	Let $\eta>0$, $C$, and the family $g_t$ be given. Then there exists $\eta'>0$ such that for any $t$ and any two embedded surfaces $S_1$ and $S_2$ in  $(\tilde{M}, \tilde{g}_t)$ with second fundamental forms bounded above by $C$ in magnitude, the following holds.  Suppose that there are points $x_1 \in S_1$ and $x_2 \in S_2$ such that $d_{\tilde{g}_t}(x_1, x_2) < \eta'$, but the distance between the unit normal vectors to $S_1$ and $S_2$ at $x_1$ and $x_2$ respectively in the unit tangent bundle to $\tilde{M}$ is at least $\eta$.  Then $S_1$ and $S_2$ intersect.  
	 	\end{lem}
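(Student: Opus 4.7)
The plan is to argue by contradiction, combining the uniform second-fundamental-form bound with a compactness argument for embedded surfaces and the fact that transverse intersection is an open condition. Suppose the lemma fails: there exist $\eta>0$, sequences $\eta'_n\searrow 0$, times $t_n\in[0,1]$, embedded surfaces $S_{1,n},S_{2,n}\subset(\tilde{M},\tilde{g}_{t_n})$ with $|A|\leq C$, and points $x_{i,n}\in S_{i,n}$ such that $d_{\tilde{g}_{t_n}}(x_{1,n},x_{2,n})<\eta'_n$ and the unit normals $\nu_{i,n}$ satisfy $d_{UT}(\nu_{1,n},\nu_{2,n})\geq\eta$, yet $S_{1,n}\cap S_{2,n}=\emptyset$.

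After translating by covering transformations we may arrange that the $x_{1,n}$ lie in a fixed compact fundamental domain for $\pi_1(M)$, and then pass to subsequences so that $t_n\to t_\infty$ and $x_{1,n}\to x_\infty$; since $\eta'_n\to 0$, also $x_{2,n}\to x_\infty$. The uniform bound $|A|\leq C$ together with the smooth convergence $\tilde{g}_{t_n}\to\tilde{g}_{t_\infty}$ and the uniform bounds on the family $g_t$ (of the same sort used in the proof of Lemma \ref{lemconvex}) produce a uniform scale $r_0>0$ on which each $S_{i,n}$, expressed in the $\tilde{g}_{t_n}$-exponential coordinates at $x_{i,n}$, is a graph over its tangent plane with uniform $C^{1,1}$ bounds. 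Passing to a further subsequence, both $S_{1,n}$ and $S_{2,n}$ converge in $C^{1,\alpha}$ on a fixed neighborhood of $x_\infty$ (with respect to the metric $\tilde{g}_{t_\infty}$) to embedded $C^{1,1}$ surfaces $S_1$ and $S_2$ both passing through $x_\infty$.

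The limiting unit normals at $x_\infty$ are the limits of the $\nu_{i,n}$ and therefore satisfy $d_{UT}(\nu_1,\nu_2)\geq\eta>0$, so $S_1$ and $S_2$ have distinct tangent planes at $x_\infty$. In a three-manifold, two $C^1$ surfaces through a common point with different tangent planes intersect transversally along a $C^1$ arc through that point in any sufficiently small neighborhood; concretely, writing them as graphs over $T_{x_\infty}S_1$, the vertical separation function changes sign along any direction in $T_{x_\infty}S_1$ that has nonzero inner product with the projection of $\nu_2$. Since transversal intersection is stable under $C^1$-small perturbations, this arc survives the $C^{1,\alpha}$-convergence $S_{i,n}\to S_i$, forcing $S_{1,n}\cap S_{2,n}\neq\emptyset$ for all large $n$ and contradicting our assumption.

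The only genuinely technical step is the graphical compactness for surfaces with bounded $|A|$ under a varying ambient metric; everything else is soft. The essential inputs are the uniform lower bound on the normal injectivity radius of each $S_{i,n}$ (following from $|A|\leq C$ and uniform bounds on $\tilde{g}_{t_n}$ and its derivatives, exactly as in the first paragraph of the proof of Lemma \ref{lemconvex}) and the openness of transversality, so I expect no serious obstacle beyond keeping these bounds uniform across $t\in[0,1]$.
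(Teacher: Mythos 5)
Your proof is correct, and it takes a genuinely different (and arguably more direct) route from the paper's. The paper performs a blow-up: it pulls back the metric via the exponential map at $x_1^n$ and then dilates by the factor $d(x_1^n,x_2^n)^{-1}$, so that in the rescaled picture $x_2^n$ sits at distance $1$, the ambient metrics converge to the flat metric, and the rescaled surfaces converge to affine \emph{planes} (the dilation kills the second fundamental form). The contradiction then comes from the elementary fact that two non-parallel affine planes in $\mathbb{R}^3$ must intersect. You instead take subsequential limits at a fixed scale without any rescaling: since $x_{1,n},x_{2,n}\to x_\infty$, the limit surfaces $S_1$ and $S_2$ already meet at $x_\infty$, and the normal gap $\geq\eta$ makes the intersection transversal; openness of transversality then yields intersections of $S_{1,n}$ and $S_{2,n}$ for large $n$. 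The blow-up in the paper buys a cleaner limit model (planes in flat space) at the cost of setting up the rescaling and tracking parallel transport of tangent planes; your fixed-scale limit avoids that machinery but requires you to reason about $C^{1,1}$ limit surfaces and invoke $C^1$-stability of transversal intersection explicitly (a step the paper's proof also implicitly relies on when passing from the limiting planes back to the original surfaces). Both approaches need the same compactness inputs — uniform $|A|$ bound, uniform ambient bounds over $t\in[0,1]$, and cocompactness of the $\pi_1(M)$-action — and you use them correctly.
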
 
	 	
	 	\begin{proof} 
	 			Assume that the statement of the lemma fails for some $\eta>0$, and let $\{S_i^n: i =1,2\}$ be a sequence of pairwise disjoint surfaces in $(\tilde{M}, \tilde{g}_{t_n})$ as above with points $x_i^n$ such that $d(x_1^n,x_2^n) \rightarrow 0$ and the distance between the respective normal vectors at $x_1^n$ and $x_2^n$ is at least $\eta$ for all $n$. 
	 			
	 			Fix some $\rho>0$. 
	 			For each $n$, identify by the exponential map the ball of radius $\rho$ at $x_1^n$ with the ball of that radius in the tangent space to $x_1^n$, and radially dilate the pulled back metric so that the distance between the origin and dilated image of the point corresponding to $x_{2}^n$ in the dilated metric is 1. 
	 			 Call the dilated metric $h_n$. For what follows, we fix isometric identifications of the tangent spaces $T_{x_1^n} S_1^n$ in the inner product induced by $\tilde{g}_{t_n}$, for the purpose of taking limits.  
	 			
	 			   Because $d(x_1^n,x_2^n) \rightarrow 0$ and the $\tilde{g}_{t}$ are in a smooth family over $[0,1]$, the $h_n$-balls of any given radius centered at the origin are smoothly converging to Euclidean balls of that radius.  Moreover, since we have a uniform bound on the second fundamental forms of the $S_i^n$, the intersections of their pre-images with the $h_n$-balls of any given radius centered at the origin are uniformly $C^1$-converging to planes, up to taking subsequences.  
	 			
	 			In the case of the $S_1^n$, this plane will simply be a subsequential limit of the tangent planes to $S_1^n$ at $x_1^n$.  Let $\Pi_n$ be the parallel transport of the tangent plane to $S_2^n$ at $x_2^n$ along the geodesic joining $x_1^n$ to $x_2^n$.  Then in the case of $S_2^n$, the subsequential limit plane will be a translated copy of a subsequential limit of the $\Pi_n$. The fact that for all $n$ the normals at $x_1^n$ and $x_2^n$ are at a distance of at least $\eta$ implies that these two planes cannot be parallel. This means that $S_1^n$ and $S_2^n$ will have to intersect for some large $n$, a contradiction.

	 		\end{proof}

	 	We now truncate the $D(s)$ to obtain disks better suited to taking limits.  Since $B(s)$ is a disk bounding $\partial B(s)$ that intersects $\gamma$ exactly once, we know that $D(s)$ intersects $\gamma$ at least once, at some point $p_s$ on $\gamma$ between $\gamma(-r)$ and $\gamma(r)$ by Lemma \ref{trapping}. Let $\sigma(s)$ be the smallest number $\sigma$ such that the $\tilde{g}_t$-ball $B_{p_s}(\sigma)$ of radius $\sigma$ centered at $p_s$ intersects $\partial D(s)$. Since $\partial D(s)$ is the boundary of an intrinsic metric disk in $S_{t_0}$, and $S_{t_0}$ is properly embedded, it must be the case that $\sigma(s) \rightarrow \infty$ as $s \rightarrow \infty$.

	 	For generic $\sigma < \sigma(s) $, the boundary of the $\tilde{g}_t$-ball $B_{p_s}(\sigma)$ of radius $\sigma$ centered at $p_s$ will intersect $D(s)$ in a union of circles by Sard's theorem. Note that the minimality of $D(s)$ implies that all connected components of $B_{p_s}(\sigma) \cap D(s)$ are disks.  If there were an annuli, then its interior component in $D(s)$ would be a minimal disk $D'$ with boundary on the boundary of $B_{p_s}(\sigma)$.  Taking the largest $\sigma'$ so that $B_{p_s}(\sigma')$ intersected $D'$, the fact that metric spheres are mean-convex in negative sectional curvature gives a contradiction.

	 	Now choose $\sigma$ in the interval $(\sigma(s) - 1 - \frac{1}{s} , \sigma(s) - 1)$ so that  $\partial B_{p_s}(\sigma) \cap D(s)$ is a union of circles, and let $D(s)'$ be the connected component of $B_{p_s}(\sigma) \cap D(s)$ containing $p_s$, which by the above paragraph is a disk.  Since we took $\sigma$ in the above interval, Lemma \ref{schoen} applies to give an upper bound on the absolute values of the principal curvatures of the $D(s)'$.

	 
	 \begin{lem} \label{nofolding} 
	 There exists $\delta'$, depending only on $\epsilon$ and the family of metrics, such that as long as $\delta$ was chosen less than $\delta'$, the nearest-point projection of $D(s)'$ to $S_{t_0}$ is well-defined and a diffeomorphism onto its image.       
	 
	 \end{lem}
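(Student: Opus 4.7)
The plan is to establish two things for $\delta$ small enough (depending only on $\epsilon$ and the family $\{g_t\}$): first that the nearest-point projection $\pi$ to $S_{t_0}$ is well-defined on $D(s)'$, and second that it is a diffeomorphism onto its image. For well-definedness I would use the uniform lower bound on the normal injectivity radius of $S_{t_0}$ in $(\tilde{M},\tilde{g}_t)$ implicit in the proof of Lemma \ref{lemconvex}, which follows from the uniform upper bound on $|A|$ forced by $\epsilon$-subordination together with the uniform bounds on $g_t$ and its derivatives. Taking $\xi$ smaller than this lower bound, Lemma \ref{trapping} places $D(s)'$ inside the normal tubular neighborhood of $S_{t_0}$ on which $\pi$ is smooth.

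For the local-diffeomorphism part, it suffices to bound, at every $q \in D(s)'$, the angle between $T_q D(s)'$ and $T_{\pi(q)} S_{t_0}$ away from $\pi/2$. I would argue this by contradiction: suppose there exist sequences $\delta_n \downarrow 0$, $t_n \in (t_0, t_0 + \delta_n)$, $s_n$, and $q_n \in D(s_n)'$ at which the angle exceeds some fixed $\theta_0 > 0$. By Lemma \ref{schoen} the principal curvatures of $D(s_n)'$ are uniformly bounded, and by Lemma \ref{trapping} each $D(s_n)'$ lies in a tubular neighborhood of width less than $\xi$ of the relevant lift of $S_{t_0}$. After translating by covering transformations so the $q_n$ stay in a compact set and passing to a subsequence, the translated $D(s_n)'$ converge, smoothly on compact subsets by standard elliptic regularity, near $q_\infty := \lim q_n$ to a $\tilde{g}_{t_0}$-minimal surface $D_\infty$ that remains inside a $\xi$-tubular neighborhood of (a translate of) $S_{t_0}$ and whose tangent plane at $q_\infty$ is tilted by at least $\theta_0$ from that of $S_{t_0}$ at $\pi(q_\infty)$.

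The contradiction is then geometric. The bounded second fundamental form of $D_\infty$ forces its tangent plane to vary slowly, so over a definite arc length the angle with $S_{t_0}$ stays above $\theta_0/2$; starting from $q_\infty$ and moving along $D_\infty$ in the direction of steepest ascent for $d_{S_{t_0}}$, the signed distance grows at rate at least $\sin(\theta_0/2)$, and after an arc length exceeding $2\xi/\sin(\theta_0/2)$ one exits the $\xi$-tube, contradicting containment. This argument runs directly when $s_n \to \infty$, since then $\sigma(s_n) \to \infty$ and $D_\infty$ is complete. In the case where $s_n$ stays bounded, one can instead argue via $t_0$-uniqueness: the limit $D_\infty$ is area-minimizing in $(\tilde{M},\tilde{g}_{t_0})$ and contained in the tubular neighborhood of $S_{t_0}$, so by the absolute-minimizing and strict stability properties of $S_{t_0}$ (part of the inductive hypothesis, with strict stability a consequence of $\epsilon$-subordination) it must coincide with the corresponding piece of $S_{t_0}$, making its tangent plane at $q_\infty$ untilted, a contradiction.

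Finally, to promote local diffeomorphism to global injectivity, I would exploit that $D(s)'$ is a topological disk contained in the product tubular neighborhood $S_{t_0} \times (-\xi, \xi)$ of $S_{t_0}$ and meets the central normal fiber $\gamma$ in the single point $p_s$. Combined with the uniform transversality of $T_q D(s)'$ to the fibers of $\pi$ and the fact that at $t = t_0$ the restriction of $\pi$ to $\partial D(s)'$ is an embedded circle (which propagates to small $\delta$ by continuity), a standard graph argument implies $D(s)'$ is a global graph over its projection, so $\pi|_{D(s)'}$ is a diffeomorphism onto its image. The main obstacle, in my view, is the case split in the compactness argument: the unbounded-$s_n$ case requires the slope estimate in a potentially warped tubular neighborhood, while the bounded-$s_n$ case leans on the strict-stability and uniqueness consequences of the earlier inductive hypotheses being already in force at time $t_0$.
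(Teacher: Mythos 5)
Your approach is genuinely different from the paper's. The paper bounds the tilt of $T_q D(s)'$ relative to $T_{q'} S_{t_0}$ directly, by applying Lemma \ref{normalvector} to the pairs $(D(s)', S_{t_0}^{\pm r})$: these surfaces are disjoint by Lemma \ref{trapping}, both have uniformly bounded second fundamental forms, and the relevant basepoints are close, so the normals must be close. You instead run a compactness-and-contradiction argument, extracting a limiting $\tilde{g}_{t_0}$-minimal surface $D_\infty$ with a definite tilt $\theta_0$. That framing is legitimate, and your observation that a complete $D_\infty$ could also be handled via the uniqueness of the properly embedded minimal plane at finite Hausdorff distance from a fixed element of $\mathcal{P}_{\tilde{g}_{t_0}}$ is a reasonable alternative to the slope walk.

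There is, however, a real gap in the ``exit the tube'' step, and its source is that you treat the trapping width $\xi$ as a fixed constant while $\delta_n \to 0$. With fixed $\xi$ the argument is quantitatively circular: to keep the angle above $\theta_0/2$ along the walk, the arc length must be at most on the order of $\theta_0/C$ (with $C$ the curvature bound from Lemma \ref{schoen}), while to exit the tube you need arc length at least on the order of $\xi/\theta_0$. These are simultaneously satisfiable only when $\theta_0^2 \gtrsim C\xi$, so the argument only excludes tilts of size at least $\sqrt{C\xi}$ --- a fixed constant that need not be smaller than $\pi/2$, and a complete minimal surface with curvature bound $C$ can indeed fold within a slab of fixed width. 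Both your argument and the paper's rely on the crucial observation (signalled by the paper's parenthetical ``$\xi(=O(\delta))$'') that the barrier radius $r$ at which the parallel surfaces $S_{t_0}^{\pm r}$ stay mean-convex in $\tilde{g}_t$ can be pushed inward as $t-t_0$ shrinks, so the width of the trapping region in Lemma \ref{trapping} tends to zero with $\delta$. Once that is in place, your $D_\infty$ is forced onto $S_{t_0}$ itself, the tilt at $q_\infty$ is zero, and the contradiction is immediate; without it, a fixed-width tube does not control the tangent planes of the surfaces it contains. The bounded-$s_n$ branch has the same missing ingredient: the claim that $D_\infty$ ``coincides with the corresponding piece of $S_{t_0}$'' presupposes that $\partial D_\infty$ lands on $S_{t_0}$, which again needs the shrinking-tube fact. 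Your final local-to-global step is essentially the paper's (a proper local diffeomorphism from a disk is a diffeomorphism) and is fine.
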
 
 
 \noindent In what follows, if we refer to a quantity as $O(x)$, we mean that it tends to zero as $x$ tends to zero.  
	 
	 \begin{proof}

	  Fix a point $q$ on $D(s)'$. By Lemma \ref{trapping}, every point on $D(s)'$ is at a distance of at most $\xi/2$ from a point on $S_{t_0}$.  For $r\in (\frac{\xi}{4},\xi)$ the  normal exponential map for $S_{t_0}$ defines a diffeomorphism from $S_{t_0} \times (-r,r)$ to the $r$-neighborhood of $S_{t_0}$, so let $q'$ be the point on $S_{t_0}$ such that the normal geodesic to $S_{t_0}$ at $q'$--- call it $\phi$--- passes through $q$. Let $q_{\pm r}'$ be the points in $S_{t_0}^{\pm r} \cap \phi$.  	

	 The distances between the unit normal vectors to $D(s)'$ and $S_{t_0}^{\pm r}$ at $q$, $q_{r}'$ and $q_{-r}'$ in the appropriate orientations are pairwise $O(\xi)$ by Lemma \ref{normalvector}.  Lemma \ref{normalvector} applies because  the $S_{t_0}^{\pm r}$ have principal curvatures bounded above in absolute value and $D(s)'$ has principal curvatures bounded above in absolute value by Lemma \ref{schoen}. Since the  normal vectors to the $S_{t_0}^{\pm r}$ are $ O(\xi)$-close to those of $S_{t_0}$ at points that correspond under normal projection, it follows that the normal vectors to $D(s)'$ and $S_{t_0}$ at $q$ and $q'$ are $O(\xi)$-close. This implies that, provided $\xi(=O(\delta))$ and $\delta$ were taken small enough, that the tangent vector to $\phi$ at q is very close to being perpendicular to $D(s)'$.  Therefore, for points $q''$ on $S_{t_0}$ in a small neighborhood $U$ of $q'$, the geodesic normal to $S_{t_0}$ at $q''$ will intersect $D(s)'$ nearly perpendicularly at some point close to $q'$.  It follows that the normal exponential map defines a diffeomorphism from $U$ to a neighborhood of $q$, so the normal projection map from $D(s)'$ to $S_t$ is a local diffeomorphism at all points of $D(s)'$ including points on its boundary. The normal projection from $D(s)'$ to $S_{t_0}$ is therefore a proper local diffeomorphism so, since $D(s)'$ is a disk, it must be a diffeomorphism.

	 	\end{proof}

\subsection{Construction of Leaves} \label{construction} 
	 	
	 	We now construct $S_t$ as a limit of the $D(s)'$.  Let  $B_n$ be the metric disk in $S_{t_0}$ with radius $n$ and center $p$ in the metric on $S_{t_0}$ induced by $\tilde{g_t}$.  If $s$ is sufficiently large, the previous lemma tells us that the normal exponential map diffeomorphically maps each $B_n$ onto a region $B_n(s)$ in $D(s)'$.  $B_n(s)$ is therefore a graph over $B_n$ in normal exponential coordinates for $S_t$.  Since the $D(s)'$ have uniformly bounded principal curvatures, we can pass to a subsequence of the $B_n(s)$ for a sequence of $s \rightarrow \infty $ that $C^1$-converges (and thus by standard elliptic PDE theory smoothly converges) over compact subsets of $B_n$.  Doing this for each $B_n$ and taking a diagonal subsequence we obtain an embedded minimal surface which we call $S_t$.  Since $S_t$ is a smooth limit of the $D(s)'$, the normal projection map defines a diffeomorphism from $S_t$ to $S_{t_0}$, and $S_t$ is a smooth properly embedded plane.  The surface $S_t$ inherits the property of being absolutely minimizing from the $D(s)'$ of which it was a smooth limit.

\subsection{Uniqueness} \label{uniqsection}  
	 	
	 	 We now check that the uniqueness conditions in Theorem \ref{main} are met. Since $S_t$ is at a Hausdorff distance from $S_{t_0}$ bounded by $\xi$,  $S_t$ is at finite Hausdorff distance from some element $S$ of $\mathcal{P}_{\tilde{g}_t}$, since this is true for $S_{t_0}$.  Let $S_t'$ be the properly embedded $\epsilon$-subordinate $\tilde{g}_t$-minimal disk at finite Hausdorff distance from $S$, guaranteed by $g_t$'s membership in $\Omega_\epsilon$.  We will show that $S_t'=S_t$.

	 	\begin{lem} \label{lim} 
	 	 $S_t'$ is contained in the region bounded by $S_{t_0}^r$ and $S_{t_0}^{-r}$ for $r \in (\frac{\xi}{4},\xi)$.

	 	 \end{lem}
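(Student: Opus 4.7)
The plan is to run the same barrier argument as in the proof of Lemma \ref{trapping}, but with $D(s)$ replaced by $S_t'$. Since $S_t'$ is no longer compact, the only new point is to use the proper embeddedness of $S_t'$ together with its finite Hausdorff distance from $S = S_{t_0}(0)$ to play the role of compactness.

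First I would argue by contradiction: suppose $S_t'$ has a point lying above $S_{t_0}^r$ for some $r \in (\xi/4, \xi)$, the symmetric case of a point below $S_{t_0}^{-r}$ being completely analogous. Using the foliation $\{S_{t_0}^r(x)\}_{x \in \mathbb{R}}$ (the parallel distance-$r$ surfaces to the leaves $S_{t_0}(x)$ from Lemma \ref{vertfoliation}), whose leaves are all mean-convex in $(\tilde{M}, \tilde{g}_t)$ by Lemma \ref{lemconvex} provided $\delta$ was chosen small enough, I would consider the set $I$ of $x \ge 0$ for which $S_{t_0}^r(x)$ meets $S_t'$. The set $I$ is nonempty because $S_t'$ lies at finite Hausdorff distance from $S$ and so straddles $S_{t_0}^r = S_{t_0}^r(0)$.

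Next I would show that $I$ attains a maximum $x^\ast$. Boundedness uses that $S_{t_0}^r(x)$ is at uniformly bounded Hausdorff distance from the totally geodesic plane $S(x)$ considered inside $(\tilde{M},\tilde{g}_t)$, and that the $S(x)$ escape every bounded neighborhood of $S$ as $x \to \infty$; combined with the finite Hausdorff distance between $S_t'$ and $S$, this forces $S_{t_0}^r(x) \cap S_t' = \emptyset$ for $x$ large. Closedness of $I$ follows from proper embeddedness of both $S_t'$ and the leaves $S_{t_0}^r(x)$ together with the smooth dependence of the foliation on $x$ furnished by Property 2.

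At the attained maximum $x^\ast$, $S_t'$ lies on one side of $S_{t_0}^r(x^\ast)$ and is tangent to it at some point, on the side into which the mean curvature vector of $S_{t_0}^r(x^\ast)$ points. Lemma \ref{lemconvex}, together with smoothness of $g_t$ in $t$, gives strictly positive mean curvature to $S_{t_0}^r(x^\ast)$ with respect to $\tilde{g}_t$, so the strong maximum principle forces $S_t'$ to coincide with $S_{t_0}^r(x^\ast)$ in a neighborhood of the tangency, contradicting that $S_t'$ is minimal while $S_{t_0}^r(x^\ast)$ is not. The step I expect to require the most care is the existence and attainment of the supremum, where properness of $S_t'$ substitutes for the compactness of $D(s)$ used in Lemma \ref{trapping}; the remainder is a direct reprise of that earlier argument.
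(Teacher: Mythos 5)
Your plan is the right one and matches the paper's strategy up to a point, but there is a genuine gap precisely where you flag that you expect the most care, and it is not repaired by what you say there. You claim that the set $I$ of $x\ge 0$ for which $S_{t_0}^r(x)$ meets $S_t'$ is closed, and hence that its supremum $x^\ast$ is attained, deducing a point of tangency between $S_t'$ and $S_{t_0}^r(x^\ast)$. Proper embeddedness of $S_t'$ and of the leaves does not give this. If $x_n\nearrow x^\ast$ with $x_n\in I$, you get intersection points $p_n\in S_t'\cap S_{t_0}^r(x_n)$, but there is no compact set that a priori contains the $p_n$: the intersections may drift off to infinity along the noncompact surface $S_t'$, so you cannot extract a convergent subsequence and conclude $S_{t_0}^r(x^\ast)\cap S_t'\ne\emptyset$. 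The supremum may genuinely fail to be attained, and so a direct tangency point need not exist; this is exactly the difficulty the paper announces when it says the proof is like Lemma \ref{trapping} ``but with an extra step because $S_t'$ is not compact.''

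The paper's extra step is what your proof is missing: take a sequence $p_n\in S_t'$ whose distances to $S_{t_0}^r(x_{\max})$ tend to $0$, apply covering transformations $\gamma_n$ (using cocompactness of the $\pi_1(M)$-action) to push the $p_n$ into a fixed fundamental domain, and pass to a subsequential smooth graphical limit of the translated surfaces $\gamma_n\cdot S_t'$ and $\gamma_n\cdot S_{t_0}(x_{\max})$. Only in this limit does one obtain a one-sided touching between a $\tilde g_t$-minimal surface and the mean-convex boundary of an $r$-tube, to which the strong maximum principle applies. Note that the covering transformations do not preserve $S_{t_0}(x_{\max})$, which is why one must take a limit of the translated surfaces themselves rather than of points on a fixed surface. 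Your final maximum-principle step is fine once you have a genuine tangency, but obtaining it requires this cocompactness-and-renormalization argument, not just properness.
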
 
	 	 
	 	 \begin{proof} 
	 	  The proof is similar to that of Lemma \ref{trapping} but with an extra step because $S_t'$ is not compact.  Let $S_{t_0}(x)$ be the foliation given by Lemma \ref{vertfoliation}, and let $S_{t_0}^r(x)$ be the signed-distance-$r$ parallel surfaces to the $S_{t_0}(x)$, where $\frac{\xi}{4} < |r|< \xi$ and these surfaces are all mean-convex.  Now assume that $S_t'$ has points that are not contained in the region bounded by  $S_{t_0}^r$ and $S_{t_0}^{-r}$--- suppose for contradiction that it has points above $S_{t_0}^r$.  Let $x_{\max}$ be supremal over all $x$ such that   $S_{t_0}^r(x)$ intersects $S_t'$.  The number $x_{\max}$ is finite because $S_t'$ is at finite Hausdorff distance from $S$ and each $S_{t_0}^r(x)$ is at finite and uniformly bounded Hausdorff distance from the corresponding element of  $\mathcal{P}_{\tilde{g}_{t_0}}$. 
	 	  	
	 	  	We can find a sequence of points $p_n$ on $S_t'$ and $q_n^r$ on $S_{t_0}^r(x_{\max})$ such that $d(p_n,q_n^r)$ tends to zero.  Let $q_n$ be the points on $S_{t_0}(x_{\max})$ that are the images of the $q_n^r$ under normal projection.  Now apply covering transformations $\gamma_n$ to take the $p_n$ back to a fixed compact set containing a fundamental domain for the action of $\pi_1(M)$ on $(\tilde{M},\tilde{g}_{t})$.  Let $p$ and $q$ be subsequential limits of the $\gamma_n \cdot p_n$ and $\gamma_n \cdot q_n$ respectively. Since $S_{t}'$ and $S_{t_0}(x_{\max})$ are minimal surfaces in $\tilde{g}_t$ and $\tilde{g}_{t_0}$ with bounded principal curvatures, we can pass to a subsequence on which $\gamma_n \cdot S_{t}'$ and $\gamma_n \cdot S_{t_0}(x_{\max})$ are smoothly and graphically converging in small balls centered at $p$ and $q$ respectively.  The $r$-neighborhood of the subsequential limit of the $\gamma_n \cdot S_{t_0}(x_{\max})$, whose boundary is strictly mean-convex, will then touch the subsequential limit of the $\gamma_n \cdot S_t'$, which is minimal, on one side at $p$, which is a contradiction.

	 \end{proof}


 Since $S_t'$ is $\epsilon$-subordinate, signed-distance-$r$ surfaces to $S_t'$ are strictly mean-convex for $0< |r|< \xi$, for the $\xi$ given by Lemma \ref{lemconvex}. By the previous lemma  $S_t'$ is contained in the $\xi/4$ $\tilde{g}_{t_0}$-neighborhood of $S_{t_0}$, so the $\tilde{g}_{t_0}$-normal projection from $S_t'$ to $S_{t_0}$ is well-defined.  Since $S_t'$ and $S_{t_0}$ are properly embedded, the normal projection is a proper local diffeomorphism, and consequently surjective. Therefore, because $S_t$ is contained in the $\xi/4$ $\tilde{g}_{t_0}$-neighborhood of $S_{t_0}$, it follows that $S_t$ is contained in the  $\xi/2$ $\tilde{g}_{t_0}$-neighborhood of $S_t'$, and therefore, as long as $\delta$ was taken small enough, the $3\xi/4$  $\tilde{g}_{t}$-neighborhood of $S_t'$.  If $S_t$ were not equal to $S_t'$, then we could take a sequence of points on $S_t$ approaching a supremal mean-convex parallel surface to $S_t'$ and produce a contradiction as in the proof of the previous lemma.  Therefore $S_t$ is equal to $S_t'$, and in particular is $\epsilon$-subordinate. 
	
	In a similar way, we can check that $S_t$ is the unique properly embedded minimal surface at finite Hausdorff distance from $S$.  The argument in Lemma \ref{lim} shows that any such minimal surface must be contained in the $r$-neighborhood of $S_t$, and from there we can show it must be equal to $S_t$ by the reasoning of the previous paragraph.


	\subsection{Property 2} \label{fact 2}    
	
	We now check that the surfaces $S_t$ we have constructed satisfy Property 2. Assume for contradiction that they do not. Then there is some sequence $S_n$ of totally geodesic planes that converges to $S$ on compact subsets, while the corresponding sequence of minimal surfaces $S_{n,t}$ in $\tilde{g_t}$ is not smoothly converging to the minimal plane $S_t$ corresponding to $S$.  Since the $S_{n,t}$ are minimal, $C^1$ convergence of the $S_{n,t}$ on compact sets would imply smooth convergence, so we can assume that there is some $\eta>0$ such that the lifts of the $S_{n,t}$ to the unit tangent bundle by their normal vectors all have points in some fixed compact subset of $\tilde{M}$ at a distance of at least $\eta$ from the lift of $S_t$ to the unit tangent bundle.    
	
	The sequence $S_{n,t_0}$ of $\tilde{g}_{t_0}$ minimal surfaces corresponding to the $S_n$ converges smoothly to the minimal surface $S_{t_0}$ corresponding to $S$ by assumption.  
	For every compact set $B$ in $(\tilde{M}, \tilde{g_t})$ 
	the intersection $B\cap S_{n,t}$ is therefore, in normal exponential coordinates for the $\xi$-neighborhood of $S_{t_0}$, a graph over $S_{t_0}$ for large enough $n$, since $S_{n,t}$ is a graph over $S_{n,t_0}$.  Because we have uniform bounds on the second fundamental forms of the $S_{n,t}$ by Lemma \ref{schoen}, we can then proceed exactly as in	{\ref{construction}. above to pass to a subsequential limit, which is a properly embedded minimal surface at finite distance from $S_t$ and so must equal $S_t$ by the uniqueness properties of $S_t$ verified in \ref{uniqsection}. But since the points on the $S_{n,t}$ where the normal vectors are at least $\delta$ from any normal vector to $S_t$ are contained in a compact set, we can find an accumulation point of any infinite sequence of them. This contradicts equality of the limit minimal surface with $S_t$.

	\subsection{The $S_t$ Give a Foliation} 
	
	We now check that the $S_t$ give a foliation of $Gr_2(\tilde{M})$ that is invariant under the action of $\pi_1(M)$.  First, the set of $S_t$ is invariant under covering transformations. This is because we've already checked that each $S_t$ is the unique properly embedded minimal disk at finite distance from some element of $ \mathcal{P}_{\tilde{g}_t}$--- that is, some  totally geodesic plane in $\mathbb{H}^3$ considered as a subspace of $(\tilde{M},\tilde{g}_t)$--- and the set $ \mathcal{P}_{\tilde{g}_t}$ is invariant under covering transformations.

	By our inductive hypothesis, there exists a continuous self-homeomorphism $\tilde{\Phi}_{t_0}$ of $Gr_2(\tilde{M})$ sending (lifts of) totally geodesic planes in $\mathbb{H}^3$ to the corresponding (lifts of) minimal disks $S_{t_0}$.  Since nearest-point projection defines a diffeomorphism between the $S_t$ and the corresponding $S_{t_0}$, by composing with $\tilde{\Phi}_{t_0}$ we obtain a self-map $\tilde{\Phi}$ of $Gr_2(\tilde{M})$ diffeomorphically sending (lifts of) totally geodesic planes to (lifts of) $S_{t}$.  That $\tilde{\Phi}$ is continuous follows from the fact that the $S_t$ satisfy Property 2. Note also that $\tilde{\Phi}$ commutes with diffeomorphisms of $Gr_2(\tilde{M})$ induced by covering transformations of $\tilde{M}$.  This follows from the fact that this is true for $\tilde{\Phi}_{t_0}$, and that since the $S_t$ and $S_{t_0}$ are invariant under covering transformations, nearest-point projection from $S_t$ to $S_{t_0}$ commutes with covering transformations.  
	
	This shows that $\tilde{\Phi}$ descends to a continuous self-map of $Gr_2(M)$, which since this map is $O(\xi)$-close to the corresponding self-homeomorphism of $Gr_2(M)$, it must be the case, provided $\delta$ (since $\xi = O(\delta)$) was taken small enough, that the self-map of $Gr_2(M)$ induced by $\tilde{\Phi}$ is homotopic to the homeomorphism induced by $\tilde{\Phi}_{t_0}$.  The map $\tilde{\Phi}$ therefore has mod-2 degree one and so is surjective.  This means that every point of $Gr_2(\tilde{M})$ is the tangent plane of some $S_t$.  The main step remaining to prove that the $S_t$ give a foliation is to check that each point of  $Gr_2(\tilde{M})$ is the tangent plane of a unique $S_t$, or in other words that $\tilde{\Phi}$ is injective.  It will then follow that $\tilde{\Phi}$ is a homeomorphism because a continuous bijection between compact metric spaces is a homeomorphism.

	The main tool for proving injectivity of $\tilde{\Phi}$ will be the following lemma, whose proof is immediate from the results in \cite[Section 5.3]{coldmini}.  This is one of the key places in the paper where we use that the ambient dimension is three.   
	
	\begin{lem} \label{normalform}
		Let $S_1$ and $S_2$ be properly embedded minimal planes in $(\tilde{M}, \tilde{g}_t)$.  Then the intersection $S_1 \cap S_2$ is an embedded graph. At any point where the two intersect non-transversely, the intersection is locally homeomorphic to a union of $n\geq 2$ straight lines with a common point.   
		
	\end{lem}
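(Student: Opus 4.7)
The plan is to reduce the statement to a local analysis near each intersection point and then invoke the classical structure theorem for nodal sets of solutions to linear elliptic PDEs in two variables, which is the content of Section 5.3 of \cite{coldmini}. The ambient dimension being three is essential: it makes the intersection a priori one-dimensional, and it makes each surface locally a graph over a common tangent plane at any point of tangency.

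First I would handle transverse intersection points. Near such a point $p$, parametrize both surfaces as graphs over $T_p S_1$ (respectively a small rotate of it). The implicit function theorem, applied to the equation $u_1 = u_2$ for the two graph functions, shows that $S_1 \cap S_2$ is a smooth embedded $1$-manifold in a neighborhood of $p$. So away from tangential intersections the intersection is locally a smooth arc.

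Next I would handle the tangential intersection points. Suppose $p \in S_1 \cap S_2$ with $T_p S_1 = T_p S_2$. In normal coordinates at $p$ adapted to this common tangent plane, write $S_i$ as the graph of a function $u_i$ on a disk in $T_pS_1$. Each $u_i$ satisfies the $\tilde{g}_t$-minimal surface equation, a quasilinear elliptic PDE. Subtracting the two equations, the difference $w = u_1 - u_2$ satisfies a second-order homogeneous linear elliptic PDE (in divergence form, with bounded coefficients depending on $u_1,u_2$ and their first derivatives), and the tangency assumption gives $w(0) = 0$, $\nabla w(0) = 0$. Either $w \equiv 0$ on a neighborhood of $0$, in which case $S_1$ and $S_2$ agree on an open set and then by unique continuation for the minimal surface equation (again a standard consequence of \cite{coldmini}) they coincide globally as properly embedded minimal planes, so the intersection is all of $S_1 = S_2$ and there is nothing to prove. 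Otherwise $w$ vanishes to some finite order $n$ at $0$, and by the Hartman--Wintner/Bers lemma on the local structure of zero sets of solutions to linear elliptic equations in two variables (\cite[Section 5.3]{coldmini}), the nodal set $\{w = 0\}$ is locally homeomorphic to the zero set of the leading homogeneous harmonic polynomial in the Taylor expansion of $w$ at $0$. That zero set consists of exactly $n$ smooth arcs through $0$ meeting transversely in pairs, i.e.\ the union of $n \geq 2$ straight lines with common point $0$.

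Combining the two cases yields the claimed local model, and hence globally $S_1 \cap S_2$ is an embedded graph whose edges are smooth arcs (the transverse part) meeting at a discrete set of vertices (the tangencies) in the $n$-lines configuration above. The main technical obstacle is justifying the reduction from the quasilinear minimal surface equation to a linear equation for $w$ with coefficients regular enough to apply Hartman--Wintner; but since both $u_1$ and $u_2$ are smooth and agree to first order at $0$, the standard fundamental-theorem-of-calculus trick gives the linear equation with smooth coefficients, and the remaining analysis is exactly what is carried out in \cite[Section 5.3]{coldmini}.
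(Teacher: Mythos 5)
Your proposal is correct and takes the same route the paper does: the paper declares the lemma ``immediate from the results in [Section 5.3] of \cite{coldmini}'' without elaboration, and your argument simply spells out what that citation encodes — the reduction at a point of tangency to a linear elliptic equation for the difference of graph functions, followed by the Hartman--Wintner/Bers description of the nodal set. One small bookkeeping remark: the zero set of a degree-$n$ homogeneous harmonic polynomial in two variables consists of $n$ lines through the origin (equivalently $2n$ rays), so when you write ``$n$ smooth arcs through $0$'' you should make clear these are full lines, not half-lines, in order for the count to match the ``$n\geq 2$ straight lines'' in the lemma; also, to conclude the global graph structure you implicitly use that the tangential points are isolated (which Hartman--Wintner gives, barring the $w\equiv 0$ case, which you correctly dispatch via unique continuation).
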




	Let $S$ and $S'$ be totally geodesic disks in $\mathbb{H}^3$, and let $S_t$ and $S_t'$ be the corresponding minimal disks in $(\tilde{M},\tilde{g}_t\}$.

	 	 Then by Lemma \ref{normalform}, $S_t$ and $S_t'$ intersect in a graph $\Gamma$. To show that they never intersect non-transversely and prove injectivity of $\tilde{\Phi}$, it is enough by the previous lemma to show that $\Gamma$ is either empty or a disjoint union of lines. 
	 	 
	 	 We claim that since $S_t$ is absolutely-minimizing, the set difference $S_t-\Gamma$ cannot have any bounded connected components. For contradiction, assume it had such a connected component $D$, which by taking an innermost such component we can assume is topologically a disk. Then by taking some large circle $C$ in $S_t$ which bounds a disk that contains the boundary of $D$ in $S_t$, we could, by cutting out $D$ and replacing it with the bounded connected component of  $S_t'- \partial D$ (which has the same area as $D$ since all disks in $S_t$ and $S_t'$ minimize area over comparison disks with the same boundary), produce a non-$C^1$ solution to the Plateau problem for $C$ in the ambient space.  This is impossible though, since the area can be decreased by smoothing in neighborhoods of non-$C^1$ points of transverse intersection (\cite[Section 5.3]{coldmini}.)  This shows that the $S_t$ we have constructed satisfy Property 1.   
	 	 
	 	 In the case that $S_t$ and $S_t'$ have disjoint boundaries at infinity, we are done by the last paragraph, since $S_t$ and $S_t'$ do not intersect outside of some compact set, so if the two intersected there would have to be a compact connected component of the complement of the intersection.  
	 	 
	 	 Otherwise, assume $S$ and $S'$ intersect in a line. Assume for contradiction that $S_t$ and $S_t'$ intersect non-transversely at a point $p$, and let $\Gamma_0$ be the connected component of $\Gamma$ containing $p$.  Locally at $p$, $\Gamma_0$ looks like $n>1$ lines meeting at a point. If there are more non-transverse intersections these lines might further branch, but they will never intersect each other at a point besides the initial branch point since that would create a compact connected component of their complement.  Since $S_t$ and $S_t'$ are at finite Hausdorff distance from $S$ and $S'$ respectively, all points on $\Gamma$ are at uniformly bounded distance from $S \cap S'$. It follows that the complement of $\Gamma_0$ in $S_t$ has a connected component $D_0$ all of whose points are at uniformly bounded distance from $S_t'$. Since the proof of Lemma \ref{vertfoliation} only used Properties 1 and 2 which we have already verified, there is a  foliation $\mathscr{F}$ of $\tilde{M}$ containing $S_t'$ as a leaf by applying that lemma to $S_t'$ and $S'$. 
	 	 
	 	 Without loss of generality, assume that $D_0$ has points above $S_t'$, and let $d$ be the supremum of the set of distances from points in $D_0$ above $S_t'$ to $S_t'$.    Since the $\xi$-neighborhood of $S_t'$ has a local mean-convex foliation, if $d$ were less than $\xi/2$, we could get a contradiction by the argument of Lemma \ref{lim}. Otherwise, we could choose another leaf $S_t''$ in the foliation $\mathscr{F}$ above $S_t'$ so that all points on $D_0$ above $S_t''$ were at a distance of less than $\xi/2$ from $S_t''$. Since the $\xi$-neighborhood of $S_t''$ also has a mean-convex foliation, this would lead to a contradiction in the same way.   (A similar argument shows that $S_t$ and $S_t'$ intersect in a single line, although we only need to show they intersect transversely.)

	 	 
	 	 


	 	 The only case left to check is if $S_t$ and $S_t'$ intersect at a single point at infinity.  The proof here is like the last case.  If $S_t$ and $S_t'$ intersect non-transversely at some point, then we can similarly deduce the existence of some unbounded connected component of the intersection $\Gamma$ all of whose points are at bounded distance from $S_t'$.  We can then produce a contradiction as above by taking a mean-convex foliation of a neighborhood of $S_t'$, or else some other $S_t''$ above or below $S_t'$.

\subsection{Local Product Charts}

	 	 A smooth local product chart for our foliation at any $p$ in $\tilde{M}$ and any tangent plane $P$ to $p$ in $Gr_2(\tilde{M})$ can be constructed as follows.  Let $S_t$ be the surface which has $P$ as a tangent plane. The transversal to our chart will be homeomorphic to the product of a small neighborhood $U$ of $P$ in the Grassmannian of the tangent space $Gr_2(T_p \tilde{M})$ with a small geodesic segment $\gamma$ in $\tilde{M}$ containing $p$ and normal to $P$ at $p$.  We diffeomorphically identify this product with a subspace $T$ of $Gr_2(\tilde{M})$ by parallel transporting $U$ along $\gamma$.  
	 	 
	 	 Take a small metric disk $V$ centered at the origin in the tangent space to $S_t$ at $p$.  We construct a map from $V \times T$ to  a small neighborhood of $P$ in $Gr_2(\tilde{M})$ as follows.

	 	 Let $(v,(p',P'))\in V \times T$ be given.  We can identify $P'$, by parallel transport along $\gamma$, with a linear subspace $P'_p$ of $T_p(\tilde{M})$.  Provided $U$ and $\gamma$ were chosen sufficiently small, the normal projection $v'$ to $P'_p$ of $v$ will have norm greater than $\frac{1}{2}|v|$.

	    Parallel transport of tangent vectors gives a natural identification between $P'_p$ and $P'$ viewed as subspaces of $T_{p} \tilde{M}$ and $T_{p'} \tilde{M}$ respectively.  Take the vector in $P'$ corresponding to $v'$--- call it $v''$--- and consider $v''$ inside the tangent space at $p'$ to the surface $S_t'$ that has $P'$ as a tangent plane.   We map $v''$ to its image under the exponential map of $S_t'$ at $p'$ in the metric on $S_t'$ induced by $\tilde{g}_t$ and define the tangent plane to $S_t'$ of this point to be the image of  $(v,(p',P'))$ in $Gr_2(\tilde{M})$ under our map.  
	    
	    
	    Knowing that the surfaces $S_t$ vary smoothly in their tangent planes, we will know that their exponential maps vary smoothly, and smoothness of the coordinate map we have defined will follow.  Suppose that a sequence $S_{t,n}$ of minimal disks we have constructed has tangent planes $P_n$ converging to the tangent plane $P$ to $S_t$ at a point. Then we need to show that $S_{t,n}$ is smoothly converging to $S_t$ on compact subsets. That the convergence is $C^1$ follows from the fact that $\tilde{\Phi}$ is a homeomorphism, and elliptic PDE theory implies that the convergence is smooth.  
	    
	    Since the differential of the coordinate map at $(0,(p,P))$ is non-singular, we can apply the inverse function theorem to restrict to a possibly smaller neighborhood of $(0,(p,P))$ in  $V \times T$ on which it is a diffeomorphism onto its image.  This shows that every point in $Gr_2(\tilde{M})$ is contained in a smooth product chart for the foliation.  The proof of Theorem \ref{main} is now complete.  
	    

	\subsection{Proof of Theorem \ref{mainintro}} 
	
	We now explain how to modify the proof of Theorem \ref{main} to give a proof of Theorem \ref{mainintro}.  Let $g_t$ be a smooth family of metrics as in the statement of Theorem \ref{mainintro}.  Then if $g_{t_0}$ is in $\Omega_\epsilon$, we claim that there is some $\delta$ depending only $\epsilon$ and bounds on the geometry of the $g_t$ such that $g_t$ is in $\Omega_{\epsilon/2}$ for $|t-t_0|< \delta$. The $\tilde{g}_t$-minimal surfaces $S_t$ in $\tilde{M}$ can be constructed exactly as above. Each of the $S_t$ can be made as $C^1$-close to the corresponding $S_{t_0}$ as desired, uniformly in $S_t$, by making $\delta$ small.  Elliptic PDE theory tells us that $C^1$-close implies $C^2$-close, so if we chose $\delta$ small enough, the $S_t$ will be $\epsilon/2$ subordinate.  The metric $g_t$ will then be contained in $\Omega_{\epsilon/2}$, and the verification that the $S_t$ give a foliation and the construction of the map $\tilde{\Phi}$ can proceed as above. 
	
	The construction of the foliations thus continues to work unless there is some time $T$ such that for every $\epsilon>0$ and  sequence of times $t_n \nearrow T$  there is some $S_{t_n}$ in $(\tilde{M},\tilde{g}_{t_n})$ that fails to be $\epsilon$-subordinate for $n$ large enough. This proves Theorem \ref{mainintro}.

\section{Applications} \label{applications}

\subsection{Density}   
In this section, we prove some density results for the stable immersed minimal surfaces in $M$ corresponding to the surface subgroups constructed by Kahn and Markovic. 

Let $g$ be a negatively curved metric on $M$ to which Theorem \ref{mainintro} or Theorem \ref{main} applies to produce a foliation. Let $\mathcal{F}_g$ be the foliation of $Gr_2((M,g))$ by (lifts of) $g$-minimal immersed disks, and let 
\[
\Phi: Gr_2((M,g_{hyp} )) \rightarrow Gr_2((M,g))
\]

\noindent be the conjugating homeomorphism that sends leaves of $\mathcal{F}_{g_{hyp}}$ to leaves of $\mathcal{F}_g$. Every leaf of $\mathcal{F}_{g_{hyp}}$ is either dense or properly immersed, so since $\Phi$ is a homeomorphism the leaves of $\mathcal{F}_g$ satisfy the same dichotomy.




Fix a compact set $K_0 \subset \tilde{M}$ that contains a small neighborhood of a connected polyhedral fundamental domain for the action of $\pi_1(M)$ on $\tilde{M}\cong \mathbb{H}^3$. 
Let $\Sigma_n$ (resp. $\Sigma_n'$)  be a sequence of stable immersed $g_{hyp}$-minimal (resp. $g$-minimal) surfaces with lifts $\tilde{\Sigma}_n$ (resp. $\tilde{\Sigma}_n'$) to $\tilde{M}$.   
Suppose the lifts $\tilde{\Sigma}_n$ and $\tilde{\Sigma}_n'$ were chosen so that all of the $\tilde{\Sigma}_n$ intersect $K_0$, and that $\tilde{\Sigma}_n'$ and $\tilde{\Sigma}_n$ are at finite Hausdorff distance from each other in $\tilde{M}$ in either (or equivalently both) of $\tilde{g}$ or $\tilde{g}_{hyp}$.  

\begin{lem} \label{traintracks} 
 
 Fix a circle $C$ in $\partial_\infty\mathbb{H}^3 \cong S^2$, and suppose that the limit sets of the $\pi_1(\tilde{\Sigma}_n)$ are Hausdorff converging to $C$ in $\partial_\infty \mathbb{H}^3 $. Let $L'$ be the minimal disk in $(\tilde{M},\tilde{g})$ whose lift to $Gr_2(\tilde{M})$ is the image under $\tilde{\Phi}$ of the lift of the totally geodesic plane $L$ with limit set $C$. Then the $\tilde{\Sigma}_n'$ converge smoothly to $L'$ uniformly on compact sets.      
 


\end{lem}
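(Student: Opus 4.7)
The plan is to use Lemma \ref{schoen} to extract a smooth subsequential limit $L''$ of the $\tilde{\Sigma}_n'$ and then identify $L''$ with $L'$ via the uniqueness in Theorem \ref{main}. Since the full sequence $\tilde{\Sigma}_n'$ converges smoothly to $L'$ if and only if every subsequence has a further subsequence converging smoothly to $L'$, it suffices to carry this out along an arbitrary subsequence.

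For the compactness step, each $\tilde{\Sigma}_n'$ is a complete stable $\tilde g$-minimal surface without boundary, so Lemma \ref{schoen} yields a uniform bound on its second fundamental form depending only on bounds on $\tilde g$. Since each $\tilde{\Sigma}_n$ meets $K_0$ and $\tilde{\Sigma}_n'$ lies at finite Hausdorff distance from $\tilde{\Sigma}_n$, each $\tilde{\Sigma}_n'$ meets some uniform compact neighborhood of $K_0$. Standard smooth compactness then gives a subsequence converging smoothly on compact subsets of $\tilde M$ to a complete, properly immersed $\tilde g$-minimal surface $L''$.

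It remains to show $L'' = L'$; by the uniqueness in Theorem \ref{main} it is enough to verify that $L''$ is a properly embedded $\epsilon$-subordinate $\tilde g$-minimal plane at finite Hausdorff distance from $L$. The crux is the Hausdorff bound. The limit sets $\Lambda_n := \partial_\infty \pi_1(\tilde\Sigma_n)$ in $\partial_\infty \mathbb{H}^3 = \partial_\infty \tilde M$ Hausdorff converge to $C$, and both $\tilde\Sigma_n$ and $\tilde\Sigma_n'$ accumulate on $\Lambda_n$ at infinity (as they share the same $\pi_1$-action and are at finite Hausdorff distance from each other). By Seppi's work on almost-Fuchsian manifolds, for $n$ large $\Lambda_n$ is a $K_n$-quasicircle with $K_n \to 1$, so the hyperbolic minimal surface $\tilde\Sigma_n$ has principal curvatures tending to zero and lies in a uniform $g_{hyp}$-tubular neighborhood of the geodesic plane $L_n$ asymptotic to $\Lambda_n$, with $L_n \to L$ smoothly on compact sets. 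An intrinsic bound of Bangert-Lang type for complete stable minimal surfaces in the pinched negatively curved Hadamard manifold $(\tilde M, \tilde g)$ shows $\tilde\Sigma_n'$ lies in a uniform $\tilde g$-neighborhood of the $\tilde g$-convex hull of $\Lambda_n$, which likewise stays uniformly close to $L$ as $\Lambda_n \to C$. Passing to the smooth limit, $L''$ is at uniformly bounded Hausdorff distance from $L$; by almost-total-geodesicity of $\tilde\Sigma_n'$ for $n$ large (inherited from $\tilde\Sigma_n$ via the conjugacy $\tilde\Phi$ of Theorem \ref{main}) it is also embedded and $\epsilon$-subordinate. The uniqueness in Theorem \ref{main} then identifies $L'' = L'$.

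The main obstacle is this uniform Hausdorff bound on $\tilde\Sigma_n'$ in $\tilde g$: the hypothesis only gives finite, possibly $n$-dependent, Hausdorff distance between $\tilde\Sigma_n$ and $\tilde\Sigma_n'$, so one cannot simply transfer the hyperbolic control by the triangle inequality. The needed uniform bound must come from intrinsic asymptotic Plateau theory in $(\tilde M, \tilde g)$ using that $\tilde\Sigma_n'$ has limit set $\Lambda_n$ converging to the round circle $C$.
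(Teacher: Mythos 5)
The student's compactness step via Lemma~\ref{schoen} is fine, and the student correctly identifies that the crux is a uniform $\tilde g$-Hausdorff bound on the $\tilde\Sigma_n'$ relative to $L$. But the proposed resolution is a gap, not a proof. The appeal to an ``intrinsic bound of Bangert-Lang type'' placing $\tilde\Sigma_n'$ in a uniform neighborhood of the $\tilde g$-convex hull of $\Lambda_n$ is not established in the paper and is in fact the nontrivial content one must produce. Likewise, the claim that $\tilde\Sigma_n'$ inherits $\epsilon$-subordinacy ``from $\tilde\Sigma_n$ via the conjugacy $\tilde\Phi$'' does not work as stated: $\tilde\Phi$ conjugates leaves of $\mathcal{F}$ to leaves of $\mathcal{F}_g$, and the $\tilde\Sigma_n$ (respectively $\tilde\Sigma_n'$) are \emph{not} leaves of those foliations --- they are the stable minimal surfaces for the Kahn--Markovic surface subgroups, so $\tilde\Phi$ does not directly move $\tilde\Sigma_n$ to $\tilde\Sigma_n'$, nor does it propagate curvature bounds.

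The paper avoids the issue entirely by a barrier argument that does not require a priori uniform control: it invokes the $\tilde g$-minimal foliation $\{L'(t)\}$ produced by Lemma~\ref{vertfoliation} (with $L'(0)=L'$), uses that $\tilde\Sigma_n$ lies in the $g_{hyp}$-convex hull of $\Lambda_n$ (which shrinks onto $L$ as $\Lambda_n\to C$), and traps $\tilde\Sigma_n'$ between $L'(\pm\alpha)$ for $n$ large via the strictly mean-convex parallel surfaces of the $L'(t)$. Only the per-$n$ finiteness of the $\tilde\Sigma_n'$--to--$\tilde\Sigma_n$ Hausdorff distance is needed to extract a supremal barrier $L'(t)$ touching $\tilde\Sigma_n'$, and the tangency with a strictly mean-convex surface gives the contradiction. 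Once $\tilde\Sigma_n'$ is confined to a shrinking slab around $L'$, $C^1$ convergence (and hence smooth convergence by elliptic theory) follows as in subsection~\ref{fact 2}. This trapping-by-barrier mechanism is the missing idea in your proposal; without it, the limit $L''$ you extract cannot be shown to lie at finite Hausdorff distance from $L$, and the appeal to the uniqueness in Theorem~\ref{main} cannot close.
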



\begin{proof} 







Let $L'(t)$ be the foliation of $\tilde{M}$ given by Lemma \ref{vertfoliation} with $\tilde{g}$-minimal leaves and $L'(0)=L'$. Let $L(t)$ be the corresponding foliation of $\mathbb{H}^3$ by geodesic planes with $L(0)=L$, so that $\tilde{\Phi}$ sends lifts of the $L(t)$ to lifts of the $L'(t)$. For every $\alpha>0$ and large enough $n$, $\tilde{\Sigma}_n$, which is contained in the convex hull of its limit set, will be contained between $L(\alpha)$ and $L(-\alpha)$.  We claim that  $\tilde{\Sigma}_n'$ is contained between $L'(\alpha)$ and $L'(-\alpha)$. 

Recall that the $\xi$-neighborhood of every $L'(t)$ has a foliation by mean-convex parallel surfaces, where $\xi$ depends on $\epsilon$ and $g$. Now, if $\tilde{\Sigma}_n'$ were not contained between $L'(\alpha)$ and $L'(-\alpha)$, then using the mean-convex parallel surfaces of the $L'(t)$ and the fact that $\tilde{\Sigma}_n'$ and the $L'(t)$ are at uniformly bounded Hausdorff distance from respectively $\tilde{\Sigma}_n$ and the corresponding  $L(t)$, one could produce a contradiction by arguments similar to those of the last section. One can also show by reasoning similar to subsection \ref{fact 2}. of the last section that $\tilde{\Sigma}_n'$ is  $C^1$-converging and thus smoothly converging to $L'=L'(0)$ on compact subsets. 


\end{proof} 

Kahn and Markovic showed that for every circle $C$ at infinity in $\partial_\infty{\mathbb{H}^3}$ there is a sequence of surface subgroups $\Gamma_n$ of $\pi_1(M)$ whose limit sets $C_n$ are Hausdorff converging to $C$ \cite{km}.  The $C_n$ are the images of round circles under $K_n$-quasiconformal homeomorphisms of $S^2$--- or $K_n$-quasicircles--- with $K_n$ tending to 1.  By \cite{sacksuhl} or \cite{schoenyauincompressible}, there exists a sequence  $\Sigma_n$ (resp. $\Sigma_n'$) of stable properly immersed $g_{hyp}$-minimal (resp. $g$-minimal) surfaces in $M$ whose fundamental groups injectively include in $\pi_1(M)$ to subgroups conjugate to $\Gamma_n$.

\begin{thm} 
Let $C$ be a circle in $\partial_\infty \mathbb{H}^3$ bounded by a geodesic plane $L$ in $\mathbb{H}^3$ that does not project to a properly immersed surface in $M$.  Then for any sequence $\Sigma_n$ of stable properly immersed minimal surfaces with lifts $\tilde{\Sigma}_n$ to $\mathbb{H}^3$ whose boundaries at infinity are $K_n$-quasicircles with $K_n$ tending to 1 and Hausdorff converging to $C$, the following is true.  Let $U$ be any open set in $Gr_2(M)$.  Then there exists $N$ so that for $n>N$, the intersection of the lift of $\Sigma_n'$ to $Gr_2(M)$ with $U$ is nonempty.   

\end{thm}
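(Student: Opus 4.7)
The plan is to combine the Ratner-Shah theorem, Lemma \ref{traintracks}, and the fact that $\Phi$ is a leaf-preserving homeomorphism of the Grassmann bundle, reducing everything to finding a single point on $L'$ whose tangent plane sits inside $U$ and then transferring this to the approximating surfaces $\tilde{\Sigma}_n'$.

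First, since $L$ does not project to a properly immersed surface in $(M, g_{hyp})$, the Ratner-Shah theorem implies that the leaf of $\mathcal{F}_{g_{hyp}}$ corresponding to $L$ is dense in $Gr_2((M,g_{hyp}))$. The homeomorphism $\Phi: Gr_2((M,g_{hyp})) \to Gr_2((M,g))$ carries this leaf to the leaf of $\mathcal{F}_g$ corresponding to $L'$, so that leaf is also dense in $Gr_2((M,g))$. Given the open set $U \subset Gr_2(M)$, I can therefore pick a point $p$ on the projection of $L'$ to $M$ whose tangent plane lies in $U$, and lift it to a point $q \in L' \subset \tilde{M}$; the tangent plane to $L'$ at $q$, viewed in $Gr_2(\tilde{M})$ and then projected down to $Gr_2(M)$, equals the chosen element of $U$.

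Next, I would apply Lemma \ref{traintracks}. Choosing a small compact neighborhood $V$ of $q$ in $\tilde{M}$, the smooth convergence $\tilde{\Sigma}_n' \to L'$ on $V$ gives, for all sufficiently large $n$, points $q_n \in \tilde{\Sigma}_n' \cap V$ whose tangent planes in $Gr_2(\tilde{M})$ are arbitrarily close to the tangent plane of $L'$ at $q$. By continuity of the projection $Gr_2(\tilde{M}) \to Gr_2(M)$ and openness of $U$, the tangent plane of $\Sigma_n'$ at the image of $q_n$ lies in $U$ once $n$ is large enough. This exhibits the required intersection of the lift of $\Sigma_n'$ with $U$.

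The only substantive step is already packaged into Lemma \ref{traintracks}; once one knows that the $\tilde{\Sigma}_n'$ converge smoothly on compact sets to $L'$ in $(\tilde{M},\tilde{g})$, the rest is chasing definitions. The one point that deserves mention is that density of the leaf in $Gr_2((M,g_{hyp}))$ transfers to density of its $\Phi$-image in $Gr_2((M,g))$; this is immediate from $\Phi$ being a homeomorphism and is exactly the feature of Theorem \ref{main} that makes the reduction to the Ratner-Shah theorem work.
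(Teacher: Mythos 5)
Your proof is correct and takes essentially the same approach as the paper's. The only cosmetic difference is that the paper fixes a lift $\tilde{U}$ of $U$ and finds a covering transformation $\gamma$ so that $\gamma \cdot L'$ meets $\tilde{U}$, whereas you fix $L'$ and pick a point $q$ on it whose tangent plane projects into $U$; these two formulations are equivalent, and both then invoke the smooth convergence from Lemma \ref{traintracks} on a compact neighborhood to transfer the conclusion to $\tilde{\Sigma}_n'$.
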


\begin{proof}
By \cite{ratner} or \cite{shah}, the lift to $Gr_2(M)$ of the covering projection of $L$ to $M$ is dense.  Let (the lift of) $L'$ be the image under $\tilde{\Phi}$ of the (lift of) $L$.  Then as observed earlier in the section, $L'$ is also dense in $Gr_2(M)$. It is enough to prove the theorem for $U$ a small ball in $Gr_2(M)$.  For any such $U$, let $\tilde{U}$ be a lift of $U$ to $Gr_2(\tilde{M})$.  Then we can find $\gamma \in \pi_1(M)$ so that the image  $\gamma \cdot L'$ of (the lift to $Gr_2(M)$ of) $L'$ under the covering transformation corresponding to $\gamma$ intersects $\tilde{U}$.  

By Lemma \ref{traintracks} the $\tilde{\Sigma}_n'$ are smoothly converging to $L'$ on compact sets, so for all $n$ sufficiently large $\gamma \cdot \tilde{\Sigma}_n'$ will intersect $\tilde{U}$. Therefore for all sufficiently large $n$, 
$\Sigma_n'$ will intersect $U$.

	
\end{proof}

\subsection{Uniqueness}  

We now prove uniqueness for properly immersed minimal surfaces whose fundamental groups injectively include to the conjugacy class of a given surface subgroup of $\pi_1(M)$, under the assumption that the limit set of the surface subgroup is close to a circle.   

Let $\Sigma$ be a stable properly immersed minimal surface in $(M,g_{hyp})$ whose fundamental group injectively includes in $\pi_1(M)$ as a subgroup conjugate to a surface subgroup $\Gamma$ of $\pi_1(M)$.  Then if the limit set of $\Gamma$ is a $K$-quasicircle for $K$ sufficiently close to 1, the main result of \cite{seppi} implies that $\Sigma$ will be the unique such surface.  The next theorem is an analogous uniqueness result for $(M,g)$. The proof occupies the remainder of the section.   

\begin{thm} \label{uniqeness} 
Fix $(M,g)$ to which Theorem \ref{mainintro} or \ref{main} applies to produce a foliation. Then there exists $\eta>0$ such that the following is true.  Suppose the limit set $\partial_\infty \tilde{\Sigma}$ of $\tilde{\Sigma}$ is a $K$-quasicircle for $K<1+\eta$.  Then there is a unique closed $g$-minimal surface in $M$ whose fundamental group injectively includes in $\pi_1(M)$ as a subgroup conjugate to $\Gamma$.   

\end{thm}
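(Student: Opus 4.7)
The plan is a compactness argument by contradiction, in the spirit of the uniqueness argument in Section \ref{uniqsection}. Assume the theorem fails; then there is a sequence of surface subgroups $\Gamma_n \leq \pi_1(M)$ with limit sets $K_n$-quasicircles $C_n$, $K_n \to 1$, each admitting two distinct closed $g$-minimal representatives $\Sigma_n^{(1)} \neq \Sigma_n^{(2)}$. Let $\tilde{\Sigma}_n^{(j)} \subset \tilde{M}$ be $\Gamma_n$-invariant lifts, both with limit set $C_n$. After conjugating $\Gamma_n$ and passing to a subsequence, each $\tilde{\Sigma}_n^{(j)}$ meets a fixed compact fundamental domain $K_0$ and the $C_n$ Hausdorff converge to a round circle $C_\infty = \partial_\infty L_\infty$ for some $L_\infty \in \mathcal{P}$. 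Let $L'_\infty$ be the leaf of $\mathcal{F}_g$ associated to $L_\infty$ by Theorem \ref{main}, and let $\{L'_\infty(t)\}$ be the foliation of $\tilde{M}$ from Lemma \ref{vertfoliation}.

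The key step is to trap each $\tilde{\Sigma}_n^{(j)}$ in a shrinking tubular neighborhood of $L'_\infty$ and then upgrade to $C^2$-convergence. For $|t|$ large, $\partial_\infty L'_\infty(t)$ is a round circle disjoint from $C_n$, so $L'_\infty(t)$ is disjoint from $\tilde{\Sigma}_n^{(j)}$; the strictly mean-convex parallels of $L'_\infty(t)$ guaranteed by $g \in \Omega_\epsilon$ and Lemma \ref{lemconvex}, combined with the maximum-principle barrier argument of Lemma \ref{trapping} (also used in the proof of Lemma \ref{traintracks}), force $\tilde{\Sigma}_n^{(j)} \subset \bigcup_{|t| \leq \alpha_n} L'_\infty(t)$ for some $\alpha_n \to 0$. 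Combined with Schoen's curvature bound (Lemma \ref{schoen}) for the stable minimal disks $\tilde{\Sigma}_n^{(j)}$ and interior elliptic regularity for the minimal-surface equation, this $C^0$-trapping upgrades to smooth convergence $\tilde{\Sigma}_n^{(j)} \to L'_\infty$ on compact subsets of $\tilde{M}$. By the $\pi_1(M)$-equivariance of $\mathcal{F}_g$ and the fact that $\pi_1(M)$ acts by $\tilde{g}$-isometries and preserves round circles in $\partial_\infty\mathbb{H}^3$, the same convergence applies after pulling any chosen point of $\tilde{\Sigma}_n^{(j)}$ back into $K_0$ by a deck transformation and passing to the leaf of $\mathcal{F}_g$ associated to a subsequential limit of the translated $C_n$. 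Hence $\tilde{\Sigma}_n^{(j)}$ is $\epsilon/2$-subordinate at every point for $n$ large.

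Finally, Lemma \ref{lemconvex} applied to the $\epsilon/2$-subordinate surface $\tilde{\Sigma}_n^{(1)}$ supplies strictly mean-convex parallel surfaces up to uniform signed distance $\xi>0$, and for $n$ large $\tilde{\Sigma}_n^{(2)}$ lies in the $\xi/4$-neighborhood of $\tilde{\Sigma}_n^{(1)}$, both being close to $L'_\infty$. The signed distance to $\tilde{\Sigma}_n^{(1)}$ restricted to $\tilde{\Sigma}_n^{(2)}$ is $\Gamma_n$-invariant and therefore attains its extrema on the compact quotient $\Sigma_n^{(2)}$; the maximum-principle argument of Lemma \ref{lim} and Section \ref{uniqsection} then forces these extrema to vanish, so $\tilde{\Sigma}_n^{(2)} = \tilde{\Sigma}_n^{(1)}$, a contradiction. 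The principal obstacle is the uniform-in-basepoint $C^2$-convergence of the $\tilde{\Sigma}_n^{(j)}$, which must be made equivariant under the full deck group by using that translates of $L'_\infty$ by $\pi_1(M)$ are again leaves of $\mathcal{F}_g$, all uniformly $\epsilon$-subordinate.
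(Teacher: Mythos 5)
Your proposal is correct and reaches the same conclusion, but it closes the argument by a genuinely different route than the paper does.  The paper spends most of its effort constructing, for each candidate $g$-minimal representative $\tilde\Sigma'$, an explicit equivariant surjection $\tilde f_\Sigma\colon\tilde\Sigma\to\tilde\Sigma'$ built from Seppi's two-plane sandwich ($L_1(p),L_2(p)$), the conjugating map $\tilde\Phi$, and intersection with a short transverse geodesic segment.  Given two distinct representatives $\tilde\Sigma',\tilde\Sigma''$, the paper first shows (via $\epsilon/2$-subordinacy and the mean-convex tubular foliation) that they must be at Hausdorff distance $\ge\xi$, and then derives a contradiction because the two maps $\tilde f_\Sigma$ land within $\xi/2$ of each other at every point of $\tilde\Sigma$ and are surjective onto $\tilde\Sigma'$ and $\tilde\Sigma''$ respectively.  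You instead replace the $\tilde f_\Sigma$ machinery with a compactness-by-contradiction argument along a sequence $K_n\to1$: the two representatives are both trapped (via the barrier argument of Lemmas \ref{trapping} and \ref{traintracks}) in shrinking slabs $\bigcup_{|t|\le\alpha_n}L'_\infty(t)$, forced by Schoen's curvature estimate and elliptic regularity to converge smoothly to the same leaf of $\mathcal F_g$, hence are $\epsilon/2$-subordinate and at Hausdorff distance $\ll\xi$ for $n$ large; then the signed distance, being $\Gamma_n$-invariant, attains its extrema on the compact quotient and the mean-convex maximum principle kills them.  Both proofs lean on the identical mean-convex barrier mechanism at the end; your version is shorter and more self-contained, while the paper's version has the advantage that the maps $\tilde f_{\Sigma_n}$ are reused verbatim in Section \ref{stronger}.

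One point you should make explicit: the barrier/sweeping step that produces $\tilde\Sigma_n^{(j)}\subset\bigcup_{|t|\le\alpha_n}L'_\infty(t)$ requires an \emph{a priori} finite Hausdorff distance between $\tilde\Sigma_n^{(j)}$ and $L'_\infty$ so that for $|t|$ large $L'_\infty(t)$ is known to miss $\tilde\Sigma_n^{(j)}$ and the sweep can start from a disjoint position; disjointness of limit circles alone does not give disjointness of the surfaces.  In the paper's Lemma \ref{traintracks} this input is supplied by passing through the $g_{hyp}$-minimal surface $\tilde\Sigma_n$ (trapped by Seppi's estimate between geodesic planes $L(\pm\alpha)$) and using that $\tilde\Sigma_n'$ and $\tilde\Sigma_n$ are at uniformly bounded Hausdorff distance.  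Since you cite Lemma \ref{traintracks} this is implicitly available to you, but your writeup never mentions the $g_{hyp}$-minimal surface and would benefit from flagging where the finite-Hausdorff-distance input comes from.  Also worth noting: the ``extrema vanish'' step needs the Hausdorff distance to already be strictly below $\xi$ (so that the signed distance function is well-defined inside the mean-convex collar), which your convergence to a common leaf supplies; emphasizing this keeps the logic airtight.
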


\subsubsection{Construction of the Maps $\tilde{f}_{\Sigma}$} 
  	
In \cite{seppi}, Seppi produces, for every point $p$ on a minimal disc $D$ with limit set a $1+\eta$ quasicircle, planes $L_1(p)$ and $L_2(p)$ respectively above and below $D$ such that the quantity 
\begin{equation} \label{close} 
\max (d(p,L_1(p)), d(p,L_2(p)))    
\end{equation} 

\noindent tends to zero uniformly in $p$ as $\eta$ tends to zero. This builds on work of \cite{krasnovschlenker} and \cite{epsenv}.  Taking $D=\tilde{\Sigma}$, we can choose $L_1(p )$ and $L_2(p)$ such that the arc-length-parametrized geodesic $\gamma_p$ normal to $\tilde{\Sigma}$ at $p$ perpendicularly intersects $L_1(p)$ and $L_2(p)$ at $\gamma_p(\delta)$ and $\gamma_p(-\delta)$ respectively, where $\delta$ is independent of $p$ and can be made as small as desired by making $\eta$ small. In addition, 
Lemma \ref{normalvector}  implies that $L_1(p)$, $L_2(p)$, and $\delta$ can be chosen so that the following is true: the distance from the tangent plane to $\tilde{\Sigma}$ at $p$ from the tangent planes to $L_1(p)$ and $L_2(p)$ at $\gamma_p(\delta)$ and $\gamma_p(-\delta)$ tends to zero as $\eta$ tends to zero, uniformly in $p$.        


 Let $\Sigma'$ be a stable $\pi_1$-injective $g$-minimal surface the inclusion of whose fundamental group in $\pi_1(M)$ is conjugate to that of $\pi_1(\Sigma)$.  Fix some $p\in \tilde{\Sigma}$.  Let $P_1$ and $P_2$ be the tangent planes to $L_1(p)$ and $L_{2}(p)$ at $\gamma_{p}(\delta)$ and $\gamma_{p}(-\delta)$. Then if $\tilde{\Sigma}$ is contained between $L_1(p)$ and $L_2(p)$, we assume that the lift $\tilde{\Sigma}'$ is chosen so that it is contained between the $\tilde{g}$-minimal planes $L_1'(p)$ and $L_2'(p)$ which correspond under $\tilde{\Phi}$ to $L_1(p)$ and $L_2(p)$. We take $p'$ to be a point on the intersection of $\tilde{\Sigma}'$ with the geodesic segment $\gamma_{p}'$ joining the projections to $\tilde{M}$ of $\tilde{\Phi}(P_1)$ and $\tilde{\Phi}(P_2)$.   
 
 We claim that $p'$ is the unique such point of intersection provided $\eta$ and $\delta$ were chosen small enough.  Assume there were some other point of intersection $p''$, and let $\phi'$ be the unique geodesic segment on $\tilde{\Sigma}'$ joining $p'$ to $p''$.

 On the one hand, given any $R>0$ and small $\epsilon>0$, we claim that we can ensure that the intrinsic ball $B_R(p')$ of radius $R$ in $\tilde{\Sigma}'$ centered at $p'$ will be at a $C^1$ distance of less than $\epsilon$ from the ball of that radius in either $L_1'(p)$ or $L_2'(p)$ centered at the intersection with $\gamma_{p'}$, provided that $\eta$ and $\delta$ were chosen sufficiently small. 
 

This follows from the same argument by contradiction as the proof of Lemma \ref{traintracks}.  As a consequence of this, provided $\epsilon>0$ and $\delta>0$ were taken small enough that $B_R(p')$ is sufficiently $C^1$ close to $L_1'(p)$ and $L_2'(p)$, the length of $\phi'$ must be at least $R$.    

On the other hand, the fact that $\tilde{\Phi}$ is a uniformly bi-Lipschitz homeomorphism from $(\tilde{M},\tilde{g}_{hyp})\rightarrow (\tilde{M},\tilde{g})$ implies that for $R$ large enough (and independent of $\tilde{\Sigma}'$), all points in $\tilde{\Sigma}'$ at intrinsic $\tilde{\Sigma}'$-distance of at least $R$ from $p'$ are at $(\tilde{M},\tilde{g})$-distance from $p'$ of at least 1. This is a contradiction.  

Therefore  $\gamma_{p}'$ and $\tilde{\Sigma}'$ intersect at a single point provided $\eta$ and $\delta$ were chosen sufficiently small.  The map $\tilde{f}_{\Sigma}$ that sends a point $p$ to the point $p'$ defined as above is therefore a well-defined map between $\tilde{\Sigma}$ and $\tilde{\Sigma}'$. Because $\tilde{f}_\Sigma$ can locally be made arbitrarily $C^1$-close to $\tilde{\Phi}$ by making $\eta$ and $\delta$ small, we can assume that $\tilde{f}_{\Sigma}$ is a local diffeomorphism.   

Because $\tilde{\Phi}$ commutes with the action of $\pi_1(M)$ on $\tilde{M}$ by isometries of $\tilde{g}_{hyp}$ and $\tilde{g}$, $\tilde{f}_{\Sigma}$ commutes with the action of $\pi_1(\Sigma)$ and descends to a map $f_{\Sigma}: \Sigma \rightarrow \Sigma'$.  Since $f_\Sigma$ is a local diffeomorphism and $\Sigma$ and $\Sigma'$ are homeomorphic, it must be a diffeomorphism, and in particular $\tilde{f}_\Sigma$ is surjective.  
\subsubsection{Proof of Theorem \ref{uniqeness}} 
Suppose that the projections of the leaves of $\mathcal{F}_g$ to $M$ are $\epsilon$-subordinate.  Then $\tilde{\Sigma}'$ is $\frac{\epsilon}{2}$-subordinate provided $\eta$ and $\delta$ were taken sufficiently small.  This is because, as noted above, $\tilde{\Sigma'}$ is locally $C^1$-converging and thus smoothly converging to (projections of) leaves of $\mathcal{F}_g$.  
 
 Now let $\eta$ and $\delta$ be small enough to satisfy all of the restrictions above, as well as one further restriction we will make below in a moment.  Assume for contradiction that $\Sigma'$ and $\Sigma''$ are distinct $g$-minimal surfaces the inclusions of whose fundamental groups in $\pi_1(M)$ are both injective and conjugate to the same surface subgroup $\Gamma$ whose limit set in $\partial_{\infty} \mathbb{H}^3$ is a $K< 1+ \eta$ quasicircle.  

Let $\tilde{\Sigma}'$ and  $\tilde{\Sigma}''$ be lifts to $\tilde{M}$ at finite distance from $\tilde{\Sigma}$ considered inside $(\tilde{M},\tilde{g})$, and note that since these are $\frac{\epsilon}{2}$-subordinate, there exists a uniform $\xi$ such that the $\xi$-neighborhood of each has a mean-convex foliation by parallel surfaces. This implies that $\tilde{\Sigma}'$ and $\tilde{\Sigma}''$ are at a Hausdorff distance of at least $\xi$ from each other.  

For each of $\tilde{\Sigma}'$ and  $\tilde{\Sigma}''$ we have a map $\tilde{f}_{\Sigma}$ defined as above.  Since the definition of the two maps is the same up until taking the intersection with a geodesic segment in $(\tilde{M},\tilde{g})$ of length that tends to zero as $\eta$ and $\delta$ tend to zero, the two images of each point on $\tilde{\Sigma}$ under the two maps will be will be at a distance that tends to zero as $\eta$ and $\delta$ tend to zero. Taking $\eta$ and $\delta$ small enough to make this distance less than $\xi/2$, the fact that both maps $\tilde{f}_{\Sigma}$ are surjective gives a contradiction.  This completes the proof of Theorem \ref{uniqeness}.






\section{Quantitative Density} 
\label{stronger}



\subsection{Constant Curvature }  \hspace{2mm} 

 \noindent We begin with the constant curvature case.  We assume throughout this subsection that  $M$ contains no properly immersed totally geodesic surfaces in its constant curvature metric.

\begin{defn} 
For a tangent plane $P$ in $Gr_2(M)$ based at $p \in M$, we define $C_{P,r} \subset Gr_2(M)$ as follows.  Lift $(p,P)$ to a point $(\tilde{p},\tilde{P})$ in $Gr_2(\tilde{M}) \cong Gr_2(\mathbb{H}^3)$, and let $\Pi \subset \tilde{M}$ be the geodesic plane tangent to $\tilde{p}$ at $\tilde{P}$.  Now take the circle $\tilde{C}_{P,r} $ in $\Pi$ of radius $r$ centered at $\tilde{p}$, lift it to $Gr_2(\tilde{M})$ by planes tangent to $\Pi$, and let $\tilde{D}_{P,r}$ be the totally geodesic disk that $\tilde{C}_{P,r}$ bounds. We define $C_{P,r}$ and $D_{P,r}$ to be the projections of $\tilde{C}_{P,r}$ and $\tilde{D}_{P,r}$ to $Gr_2(M)$, and we define $\mu_{P,r}$ to be the probability measure that corresponds to averaging over $C_{P,r}$ parametrized by arc-length in the metric induced by $Gr_2(M)$.

\end{defn}

\begin{prop} \label{quantequi} Let 
	\[
	f: Gr_2(M) \rightarrow \mathbb{R}
	\]
	
	\noindent be a continuous function. Then for every $\epsilon>0$ there exists $R$ such that for all $P \in Gr_2(M)$ and all $r>R$, 
	\begin{equation}\label{equi}  
	\left| \int_{Gr_2(M)} f d \mu_{P,r} - avg(f) \right|< \epsilon,  
	\end{equation} 
	where $avg(f)$ is the average of $f$ over $Gr_2(M)$ in its volume form for the metric induced by the hyperbolic metric on $M$.  
	
	\end{prop}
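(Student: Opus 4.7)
The plan is to deduce Proposition \ref{quantequi} from the unique ergodicity of the horocycle flow on the oriented frame bundle $\Gamma\backslash G$ of $M$, where $G = PSL(2,\mathbb{C})$ and $\Gamma = \pi_1(M)$. Let $U = \{u_s\}_{s\in\mathbb{R}} \subset G$ be a one-parameter unipotent subgroup. By Ratner's measure classification theorem, every ergodic $U$-invariant Borel probability measure on $\Gamma\backslash G$ is uniform on a closed orbit of a connected subgroup $L$ with $U \subset L \subset G$. For any proper $L$, such a closed orbit forces $\Gamma$ to contain a cocompact discrete subgroup of a conjugate of $L$. If $L$ is $U$, its complexification, or either Borel subgroup of $G$, this would require $\Gamma$ to contain a parabolic element, which is impossible as $\Gamma$ is cocompact. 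If $L \cong PSL(2,\mathbb{R})$, the closed orbit would produce a properly immersed totally geodesic surface, excluded by hypothesis. Hence the only $U$-invariant probability measure is Haar, and by unique ergodicity on the compact space $\Gamma\backslash G$,
\[
\frac{1}{T}\int_0^T F(x\cdot u_s)\, ds \longrightarrow \int F\, d\mathrm{Haar}
\]
uniformly in $x$ as $T\to\infty$, for every continuous $F$ on $\Gamma\backslash G$.

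Next I would approximate the circle measures $\mu_{P,r}$ by horocycle averages. Lift $C_{P,r}$ to a circle $\tilde C_{P,r}$ in $\Gamma\backslash G$ by assigning to each $q \in C_{P,r}$ the orthonormal frame $(e_1, e_2, n)$ whose first two vectors are the outward radial direction from the center and the positive tangent to $C_{P,r}$ in $\Pi$, and whose third vector is a chosen unit normal to $\Pi$. With $U$ selected so that right multiplication by $u_s$ advances the basepoint along the horocycle in $\Pi$ tangent to $e_2$, the curve $s \mapsto \tilde q \cdot u_s$ is precisely the lift of this horocycle through $q$. Since a geodesic circle of radius $r$ in $\mathbb{H}^2$ has geodesic curvature $\coth r$ while a horocycle has curvature $1$, comparison of the Frenet equations yields
\[
d_{\Gamma\backslash G}\bigl(c(s),\, \tilde q \cdot u_s\bigr) = O(s^2 e^{-2r}) \qquad \text{for } |s|\le L,
\]
where $c$ is the arc-length parametrization of $\tilde C_{P,r}$ starting at $\tilde q$.

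To conclude, pull back any continuous $f: Gr_2(M)\to\mathbb{R}$ to a continuous $F$ on $\Gamma\backslash G$, so that $\int f\, d\mu_{P,r} = \int F\, d\tilde\mu_{P,r}$ and $avg(f) = \int F\, d\mathrm{Haar}$. Partition $\tilde C_{P,r}$ into arcs of arc-length $L = L(r)$ with $L\to\infty$ and $L^2 e^{-2r}\to 0$ as $r \to \infty$; for instance $L = r$. On each arc the arc-average of $F$ differs from the corresponding horocycle average $\tfrac{1}{L}\int_0^L F(\tilde q_i \cdot u_s)\, ds$ by at most $\omega_F(O(L^2 e^{-2r}))$, where $\omega_F$ is the modulus of continuity of $F$. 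By the first paragraph each horocycle average is within $o(1)$ of $\int F\, d\mathrm{Haar}$, uniformly in the basepoint. Averaging over the arcs gives $\int f\, d\mu_{P,r} \to avg(f)$ as $r \to \infty$, with uniformity in $P$ inherited from the uniformity in $x$ in the unique-ergodicity statement.

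The main obstacle is the two-scale balancing act in the last paragraph: $L$ must be large enough that horocycle averages of length $L$ are close to $\int F\, d\mathrm{Haar}$ (a condition depending on $F$ alone, not on $P$) and small enough that $L^2 e^{-2r} \to 0$ so that the geometric circle-to-horocycle approximation is valid. These constraints are easily compatible because the approximation error decays exponentially in $r$ while any polynomial growth rate of $L$ is enough for the ergodic convergence.
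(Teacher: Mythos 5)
Your proof is correct and uses essentially the same two ingredients as the paper: Ratner's measure classification to show that the only $U$-invariant probability measure on the frame bundle is Haar (ruling out the same list of intermediate subgroups, with a slightly different but equally valid argument for the Borel case, via non-unimodularity/no-parabolics rather than via the no-totally-geodesic-surface hypothesis), and the approximation of large geodesic circles by horocycle arcs. The paper packages this as a proof by contradiction — take a weak-$*$ limit of a hypothetical bad sequence $\hat\mu_{P_n,r_n}$, show the limit is $U$-invariant, then classify — whereas you instead establish unique ergodicity up front, invoke the standard fact that unique ergodicity on a compact space gives uniformity in the basepoint, and then carry out the circle-to-horocycle comparison with an explicit $O(s^2 e^{-2r})$ bound and a two-scale choice of $L(r)$; this is a valid and arguably more self-contained rendering of the same argument, trading the soft compactness step for explicit estimates.
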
 
	
	
	\begin{proof} 
		We are going to prove this by applying Ratner's measure classification theorem (\cite[Theorem 1.11]{pisanotes}.)  Fix an orientation for $C_{P,r}$ and let $\hat{C}_{P,r}$ be its natural lift to the frame bundle $F\cong PSL(2,\mathbb{C})/\pi_1(M)$ of $M$ by, for a point $p \in C_{P,r}$, taking the first vector in the frame to be the outward unit normal vector to the projection of $C_{P,r}$ to $M$ tangent to (the projection of) $\Pi$, and the second to be tangent to $C_{P,r}$ in the direction determined by its orientation. The third vector is then determined by the orientation of $M$. Let $\hat{\mu}_{P,r}$ be the probability measure on $F$ given by averaging over $\hat{C}_{P,r}$.    
		
		Assume that the statement is false, and that for some $f$ and $\epsilon$ there existed a sequence $\mu_{P_n,r_n}$ such that  (\ref{equi}) fails for all $n$.  We can pull back $f$ to a function $\hat{f}$ on $F$ so that

			\begin{equation}\label{equiF} 
				\left| \int_{F} \hat{f} d \hat{\mu}_{P_n,r_n} - avg(f) \right|\geq \epsilon  
			\end{equation}

		  \noindent for all $n$. Since $F$ is compact, we can take a weak-$*$ limit of the $\hat{\mu}_{P_n,r_n}$ to obtain a probability measure $\hat{\mu}$ for which the $\hat{\mu}$-average and the Haar-measure-average of $\hat{f}$ differ by at least $\epsilon$.  
		
		Let $U$ be the projection to $PSL(2,\mathbb{R})$ of the unipotent subgroup of $SL(2,\mathbb{R})$ 			
		\[
		 \left\{\begin{matrix} \begin{pmatrix} 1 & t \\ 0 & 1 \end{pmatrix} \end{matrix} : t \in \mathbb{R} \right\}.  
		\]
		
		\noindent \textbf{Claim: $\hat{\mu}$ is $U$-invariant} 
		
		 Fix an element $u \in U$.  Then for every $\delta>0$, there exists $N$ such that for $n>N$, there are arc-length parametrizations $\phi_1$ and $\phi_2$ of respectively $u \cdot \hat{C}_{P_n,r_n}$ and $\hat{C}_{P_n,r_n}$ 
		 such that 
		 \begin{equation} \label{casi} 
		 d(\phi_1(s), \phi_2(s))< \delta
		 \end{equation} 
		for all $s$, where $d$ is the distance in $F$.  This follows from the fact that metric circles of large radius in $\mathbb{H}^2$ can be approximated in large neighborhoods of each point by horocyles, which are preserved by $U$.  
		
		 The inequality (\ref{casi}) implies that for any continuous $g:F \rightarrow \mathbb{R}$, 
		 \[
		\left |\int_F g d (u_* \hat{\mu}_{P_n,r_n}) - \int_F g d \hat{\mu}_{P_n,r_n}  \right| 
		 \]      	
		tends to zero as $n\rightarrow \infty$.  Therefore $\hat{\mu}$ is $U$-invariant.                                    
		   
		\noindent \textbf{Claim: Any U-invariant measure $\hat{\mu}$ on $F$ must be the volume measure} 
		
			By Ratner's measure classification theorem, $\mu$ is supported on a union of closed orbits of subgroups $H$ of $PSL(2,\mathbb{C})$ containing $U$.  We claim that any such $H$ must be equal to $PSL(2,\mathbb{C})$, which  we check by ruling out intermediate candidates for $H$ one by one.

			First of all, $U$ has no closed orbit in $F$. This is because no point on a $U$-orbit in $PSL(2,\mathbb{C})$ is mapped to another point on the same orbit by the action of a matrix corresponding to a hyperbolic isometry of $\mathbb{H}^3$. To see this, let $T$ be a hyperbolic element of $PSL(2,\mathbb{C})$.  If $T$ mapped some frame to another frame on the same $U$-orbit, then for some $u\in U$, $T \circ u$ would fix some frame, and therefore have to be the identity, a contradiction. Therefore, no points on a $U$-orbit can be identified when modding out by $\Gamma$, and $H$ must properly contain $U$. Similar reasoning shows that $H$ cannot be the (projection to $PSL(2,\mathbb{C}$) of) the group of matrices of the form 
			
			\[
			\left\{ \begin{matrix} \begin{pmatrix} 1 & z \\ 0 & 1 \end{pmatrix} \end{matrix} : z \in \mathbb{C}\right\},   
			\]
			whose orbits are horospheres.

			Our assumption on the absence of properly immersed totally geodesic surfaces rules out $PSL(2,\mathbb{R})$ as a possibility for $H$.  It also rules out the group of real upper triangular matrices in $PSL(2,\mathbb{C})$.  An orbit of this group corresponds to a pair consisting of a totally geodesic plane $\Pi$ and a point $p \in \partial_{\infty} \Pi$ on the sphere at infinity of $\mathbb{H}^3$. Such an orbit consists of all frames lying over the plane $\Pi$, such that the first vector of the frame is tangent to a geodesic ray with endpoint $p$ on the sphere at infinity when lifted to the universal cover. Because this set lies over a totally geodesic plane in $\mathbb{H}^3$, it cannot have a closed projection to $F$. One can rule out the subgroup of complex upper triangular matrices on similar grounds.

			Since we've exhausted all conjugacy classes of intermediate closed subgroups (see \cite[Section 4.6]{kapovichbook}), $H$ must be equal to $PSL(2,\mathbb{C})$. This implies that $\hat{\mu}$ equals the Haar measure on $F$, which contradicts (\ref{equiF}.)    
			\end{proof} 
			
			Let  $\mu_{D,P,r}$ be the measure obtained by averaging over $D_{P,r}$.  The next corollary follows by integrating in polar coordinates.     
			
			\begin{cor} \label{diskscor}

			   For any continuous $f$ and fixed $\epsilon>0$ there exists $R_0$ such that for all $P \in Gr_2(M)$ and all $R>R_0$, 
				\begin{equation}\label{equi3}  
					\left| \int_{Gr_2(M)} f d \mu_{D,P,R} - avg(f) \right|< \epsilon,  
				\end{equation} 
				\end{cor}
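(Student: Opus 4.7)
The plan is to reduce the corollary to Proposition \ref{quantequi} by writing the disk integral in polar coordinates centered at $\tilde{p}$ and viewing $\int f \, d\mu_{D,P,R}$ as a weighted average of the circle integrals $\int f \, d\mu_{P,r}$ for $r \in [0,R]$. Since the geodesic disk $\tilde{D}_{P,R}$ lifts a metric disk of radius $R$ in the hyperbolic plane $\Pi$, its area form is $\sinh(r)\, dr\, d\theta$, its total area is $A(R) = 2\pi(\cosh(R)-1)$, and the probability measure $\mu_{P,r}$ corresponds to $\tfrac{1}{2\pi}\,d\theta$. A direct computation then gives
\begin{equation*}
\int_{Gr_2(M)} f \, d\mu_{D,P,R} \;=\; \frac{1}{\cosh(R)-1}\int_0^R \sinh(r) \left(\int_{Gr_2(M)} f\, d\mu_{P,r}\right) dr.
\end{equation*}

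Next, given $\epsilon > 0$, I would apply Proposition \ref{quantequi} to produce $R_1 = R_1(\epsilon, f)$, independent of $P$, such that $\bigl|\int f\, d\mu_{P,r} - \mathrm{avg}(f)\bigr| < \epsilon/2$ for all $r > R_1$ and all $P$. Split the $r$-integral into the pieces $[0,R_1]$ and $[R_1,R]$. On $[R_1,R]$, the inner integral differs from $\mathrm{avg}(f)$ by at most $\epsilon/2$, so that portion contributes $\tfrac{\cosh(R)-\cosh(R_1)}{\cosh(R)-1}\bigl(\mathrm{avg}(f) + O(\epsilon/2)\bigr)$, which is within $\epsilon/2 + o(1)$ of $\mathrm{avg}(f)$ as $R \to \infty$.

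For the remaining piece over $[0,R_1]$, use the crude bound $\bigl|\int f\, d\mu_{P,r}\bigr| \leq \|f\|_\infty$, which yields an error of at most $\|f\|_\infty \tfrac{\cosh(R_1)-1}{\cosh(R)-1}$. Because $R_1$ depends only on $\epsilon$ and $f$ (not on $P$ or $R$), while $\cosh(R)$ grows exponentially, this error is smaller than $\epsilon/4$ once $R$ exceeds some $R_0 = R_0(\epsilon, f, R_1)$. Choosing $R_0$ also large enough that $\tfrac{\cosh(R)-\cosh(R_1)}{\cosh(R)-1}$ is within $\epsilon/(4\|f\|_\infty + 1)$ of $1$ finishes the proof.

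There is no real obstacle here since it is essentially integration by parts against the exponentially growing measure $\sinh(r)\, dr$; the only subtlety is making sure both the choice of $R_1$ and the tail estimate are uniform in $P \in Gr_2(M)$, which is already built into Proposition \ref{quantequi}.
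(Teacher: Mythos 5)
Your proof is correct and is precisely the argument the paper intends—the paper states only that "the next corollary follows by integrating in polar coordinates," and your write-up supplies exactly that computation: expressing the disk average as a $\sinh(r)\,dr$-weighted average of the circle averages $\int f\,d\mu_{P,r}$, applying Proposition \ref{quantequi} on the tail $[R_1,R]$, and absorbing the $[0,R_1]$ piece using the exponential growth of the hyperbolic area measure.
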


			We can now prove equidistribution for certain sequences of minimal surfaces in $M$. First we prove a lemma.   
			
				\begin{lem} \label{fubinilemma} 
					Let $R>1$ be given. Then for every $\epsilon$ there is some $\delta$ so that the following is true. Let $\Sigma$ be a closed Riemannian surface 
					  with Gauss curvature everywhere in the interval ($-1-\delta,-1 + \delta)$.  Let $f$ be a function on $\Sigma$, and $f_R$ the function that at each point is equal to the average of $f$ over the disk of radius $R$ at that point.  If the injectivity radius of $\Sigma$ is less than $R$ at that point, we define this average by lifting the disk of radius $R$ at that point to the universal cover of $\Sigma$ and taking the average of the pullback of $f$ over this disk. 
					
					Then the averages $\text{avg}(f)$ and $\text{avg}(f_R)$ of these two functions over $\Sigma$  satisfy 
					\begin{equation} \label{almost equal} 
						\left|\text{avg}(f)- \text{avg}(f_R)\right| < \epsilon \max(|f|)	. 
					\end{equation}

				\end{lem}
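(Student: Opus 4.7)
The plan is a Fubini-type argument carried out in the universal cover, combined with a volume comparison estimate to handle the variable (but nearly constant) curvature.

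First, pass to the universal cover $\pi : \widetilde{\Sigma} \to \Sigma$ and fix a fundamental domain $F$. Writing $A_R(\tilde{p}) := |\widetilde{D}_R(\tilde{p})|$ for the area of a disk of radius $R$ in $\widetilde{\Sigma}$, the definition of $f_R$ becomes
\[
f_R(\pi(\tilde{p})) \;=\; \frac{1}{A_R(\tilde{p})} \int_{\widetilde{D}_R(\tilde{p})} f(\pi(\tilde{q}))\, d\tilde{q}.
\]
I would then compute $\int_\Sigma f_R\,dp = \int_F f_R(\pi(\tilde p))\,d\tilde p$ as a double integral and swap the order of integration using the symmetry $d(\tilde p,\tilde q) < R \iff \tilde p \in \widetilde{D}_R(\tilde q)$. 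Using the deck-invariance of both $f\circ\pi$ and $A_R$, and the tiling $\widetilde{\Sigma} = \bigsqcup_\gamma \gamma F$, one finds after a standard unfolding argument
\[
\int_\Sigma f_R\,dp \;=\; \int_F f(\pi(\tilde q))\,\Phi_R(\tilde q)\,d\tilde q,\qquad \Phi_R(\tilde q) := \int_{\widetilde{D}_R(\tilde q)} \frac{d\tilde p}{A_R(\tilde p)}.
\]

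The second step is to show that $\Phi_R$ is close to $1$ everywhere. Let $A^0(R)$ denote the area of a disk of radius $R$ in the model space of constant curvature $-1$. By the Bishop--Gromov volume comparison applied to both the upper and lower curvature bounds (curvature in $(-1-\delta,-1+\delta)$ translates to a two-sided volume comparison on disks of radius $R$), one gets
\[
(1-\eta)\,A^0(R) \;\le\; A_R(\tilde p) \;\le\; (1+\eta)\,A^0(R)\qquad\text{uniformly in }\tilde p,
\]
where $\eta = \eta(\delta,R) \to 0$ as $\delta \to 0$ (with $R$ fixed). Substituting these bounds directly into the definition of $\Phi_R$ yields $\Phi_R(\tilde q) \in [\tfrac{1-\eta}{1+\eta},\tfrac{1+\eta}{1-\eta}]$, so in particular $|\Phi_R - 1| \le C\eta$ for some absolute constant $C$.

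Combining the two steps,
\[
\left| \int_\Sigma f_R\,dp - \int_\Sigma f\,dp\right|
\;=\; \left|\int_F f(\pi(\tilde q))(\Phi_R(\tilde q)-1)\,d\tilde q\right|
\;\le\; C\eta\,\max|f|\,\mathrm{Area}(\Sigma),
\]
and dividing by $\mathrm{Area}(\Sigma)$ gives $|\mathrm{avg}(f) - \mathrm{avg}(f_R)| \le C\eta \max|f|$. Choosing $\delta$ small enough that $C\eta(\delta,R) < \epsilon$ completes the proof.

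The main obstacle is the book-keeping in the unfolding/Fubini step, which requires that the weight $1/A_R(\tilde p)$ be $\pi_1(\Sigma)$-invariant on $\widetilde{\Sigma}$ (it is, since the lifted metric is invariant) so that the sum over deck transformations collapses the integral over a fundamental domain to an integral over $\widetilde{D}_R(\tilde q)$. The curvature hypothesis enters only through Bishop--Gromov to control $A_R$; in exact constant curvature one has $\Phi_R \equiv 1$ and the conclusion is the exact identity $\mathrm{avg}(f_R) = \mathrm{avg}(f)$.
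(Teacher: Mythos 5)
Your proof is correct and the underlying idea is the same as the paper's: both are Fubini arguments combined with the observation that when the Gauss curvature is pinched near $-1$, areas of geodesic disks of a fixed radius $R$ are uniformly close to the constant-curvature value. The difference is in the execution of the Fubini step. The paper builds a product $D\times\Sigma_0$ over a fundamental cell, writes $\int_\Sigma f_R$ as a double integral with a density $\phi(x,y)$, and then argues via Riemann sums and a multiplicity function $m_x(y)$ (counting geodesic segments of length $<R$ from $x$ to $y$) that the weight attached to $f(x)$ after swapping is close to $1$. You instead unfold directly in the universal cover and obtain the clean exact identity $\int_\Sigma f_R = \int_F f(\pi(\tilde q))\,\Phi_R(\tilde q)\,d\tilde q$ with $\Phi_R(\tilde q)=\int_{\widetilde D_R(\tilde q)}A_R(\tilde p)^{-1}\,d\tilde p$, and then bound $\Phi_R$ by volume comparison. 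This makes the reweighting explicit, avoids the Riemann-sum heuristic, and is arguably easier to check; the paper's version is more elementary in that it never invokes Bishop--G{\"u}nther by name (its $\phi(x,y)\to 1$ and $\operatorname{Area}(D(y,R))\to A^0(R)$ statements encode the same content). One small quibble: the two-sided bound on $A_R$ is not literally ``Bishop--Gromov applied to both bounds''; the upper bound $A_R\le A^{-1-\delta}(R)$ follows from Bishop (Ricci lower bound), while the lower bound $A_R\ge A^{-1+\delta}(R)$ is G{\"u}nther's inequality (sectional upper bound plus absence of conjugate points in the universal cover, which holds since $-1+\delta<0$). This does not affect the correctness, but the attribution should be fixed.
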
  
				
				\begin{proof}

 Let $\Sigma_0$ be the complement of the 1-skeleton of the standard $4g$-gon cell structure on $\Sigma$, over which the tangent bundle to $\Sigma$ is trivial.  Define a metric on the product of a disk $D$ and $\Sigma_0$  as follows.   For fixed $p\in \Sigma_0$, we identify $D$ with the disk of radius $R$ in $T_p\Sigma$, and pull back the metric on $\Sigma$ under the exponential map to get a metric on $D$, whose area form we denote by $dV_{D,p}$.  This metric varies smoothly in $p$, so we can define a smooth metric on $D \times \Sigma_0$ such that the induced metric on each $\{ d \} \times \Sigma_0 \subset D \times \Sigma_0$ is isometric to the metric on $\Sigma_0$, whose area form we denote by $dV_{\Sigma_0}$.   The resulting volume form on $D \times \Sigma_0$ at $(x,y)$ splits as  
\[
 dV_{D,y}(x) \wedge dV_{\Sigma_0}(y).
 \]
 
 For fixed $y$, we identify $D \times \{y\}$ with the disk of radius $R$ in the hyperbolic plane via the exponential map of the centerpoint of $D$, and we write  
\[
dV_{D,y}(x) = \phi(x,y) dV_{\mathbb{H}^2}(x),      
\]			
where $dV_{\mathbb{H}^2}(x)$ is the hyperbolic area form on $D \times \{y\}$ under this identification.  Given $f$ as in the theorem, we define
\[
\hat{f}: D \times \Sigma_0 \rightarrow \mathbb{R} 
\]     
by setting $\hat{f}(x,y)$ to be the value of $f$ at the point on $\Sigma$ that is the image of $x$ under the natural map from $D \times \{y\}$ to $\Sigma$.   We have that,

	\begin{align} \label{fubini} 
		\int_{\Sigma} f_R &=  \int_{\Sigma_0} \frac{1}{ \text{Area}(D(y,R))} \int_{D} \hat{f}(x,y) \phi(x,y)   dV_{\mathbb{H}^2}((x)) dV_{\Sigma_0}((y)), 
	\end{align}	
\noindent where $\text{Area}(D(y,R))$ is the area of the disk of radius $R$ centered at a lift of $y$ to the universal cover of $\Sigma$.  By taking $\delta$ small enough we can make $\phi(x,y)$ pointwise arbitrarily close to 1, uniformly over all $\Sigma$ satisfying the hypotheses of the theorem.  We can thus also make $\text{Area}(D(y,R))$ arbitrarily close to the area of the disk of radius $R$ in $\mathbb{H}^2$.


Define the function $m_x(y)$ to be the number of distinct geodesic segments of length less than $R$ joining $x$ to $y$.  
By partitioning $D$ in its hyperbolic metric and $\Sigma_0$ into small almost-Euclidean rectangles and taking Riemann sums for the double integral in (\ref{fubini}), we see that the contribution of $f(x)$ to the integral is weighted by the quantity 
\[
\int_\Sigma m_{x}, 
\]
which can be made as close as desired to the area of the disk of radius $R$ in the hyperbolic plane, since the integral of $m_x$ over $\Sigma$ is just the area of the lift of the disk of radius $R$ at $x$ to the universal cover of $\Sigma$. It follows that the quantity in (\ref{fubini}) can be made $\epsilon \max{|f|}$-close to the integral of $f$ over $\Sigma$ by taking $\delta$ small.






\end{proof}

\begin{thm} \label{constcurvature} 
	
	Let $M$ be a closed hyperbolic 3-manifold with no properly immersed totally geodesic surfaces, and let $f$ be a continuous function on $Gr_2(M)$ with average $avg(f)$.  Then for every $\epsilon>0$ we can find $\delta$ such that the following holds. 
	Take any surface subgroup of $\pi_1(M)$ realized  by a properly immersed minimal surface $\Sigma$.  Assume also that the limit set of a lift of $\Sigma$ to the universal cover is a $K$-quasicircle for  $K<1 +\delta$. The surface $\Sigma$ includes in $Gr_2(M)$ by its tangent planes, and we define $avg(f,\Sigma)$ to be the average of the pullback of $f$ over $\Sigma$ in the metric on $\Sigma$ induced by $M$. Then        
	
	\[
	\left|	avg(f) - avg(f,\Sigma)   \right| < \epsilon.  
	\]

\end{thm}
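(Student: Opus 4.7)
The plan is to sandwich $\text{avg}(f, \Sigma)$ between two disk-averaged quantities and compare both to $\text{avg}(f)$. Fix $\epsilon > 0$. By Corollary \ref{diskscor}, choose $R$ so that $|\int f \, d\mu_{D,P,R} - \text{avg}(f)| < \epsilon/3$ for every $P \in Gr_2(M)$. Define $h_R \colon \Sigma \to \mathbb{R}$ by $h_R(p) = \int f \, d\mu_{D,P(p),R}$, where $P(p)$ is the tangent plane to $\Sigma$ at $p$; then automatically $|\text{avg}(h_R, \Sigma) - \text{avg}(f)| < \epsilon/3$, so it suffices to show $|\text{avg}(f, \Sigma) - \text{avg}(h_R, \Sigma)| < 2\epsilon/3$.

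To do this, let $f_R$ denote the intrinsic disk-average of $f$ over radius-$R$ balls in $\Sigma$, as in Lemma \ref{fubinilemma}. By \cite{seppi}, for $K < 1 + \delta$ with $\delta$ small the principal curvatures of $\Sigma$ are uniformly bounded by some $\eta(\delta) \to 0$, so by the Gauss equation the Gauss curvature of $\Sigma$ lies in $[-1 - \eta^2, -1]$. For $\delta$ small enough, Lemma \ref{fubinilemma} applied with tolerance $\epsilon/(3(\max|f|+1))$ gives $|\text{avg}(f, \Sigma) - \text{avg}(f_R, \Sigma)| < \epsilon/3$. It remains to compare $f_R(p)$ with $h_R(p)$ pointwise. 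Using Seppi's sandwiching of $\tilde{\Sigma}$ between two nearby geodesic planes, together with Lemma \ref{normalvector} to control tangent-plane drift, one can parameterize both the intrinsic radius-$R$ disk in $\tilde{\Sigma}$ centered at a lift $\tilde{p}$ and the totally geodesic disk $\tilde{D}_{P(\tilde{p}), R}$ by the exponential map on $P(\tilde{p})$, and obtain uniform $C^1$-closeness of their lifts in $Gr_2(\tilde{M})$, with closeness tending to zero as $\delta \to 0$ for fixed $R$. Continuity of $f$ then yields $|f_R(p) - h_R(p)| < \epsilon/3$ uniformly in $p$, and integrating over $\Sigma$ finishes the argument via the triangle inequality.

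The main obstacle is the pointwise comparison between $f_R$ and $h_R$. The radius $R$ is determined by $f$ and $\epsilon$ via Corollary \ref{diskscor} and fixed before $\Sigma$ is chosen, so smallness of principal curvatures on $\tilde{\Sigma}$ alone does not bound the drift of the Gauss map over the macroscopic distance $R$. What makes the comparison work is Seppi's global sandwiching, which permits a Jacobi-field comparison along radial geodesics in $P(\tilde p)$ to bound simultaneously the difference of the two exponential maps into $\mathbb{H}^3$ and the difference of their differentials, uniformly over radius-$R$ disks. This is the step that forces the quasiconformal constant $K$ to be close to $1$ in a way depending on $R$, and hence on $\epsilon$ and the modulus of continuity of $f$.
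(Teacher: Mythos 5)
Your proposal is correct and follows essentially the same route as the paper's own proof: both fix $R$ via Corollary~\ref{diskscor}, use Seppi's principal-curvature bound to make intrinsic radius-$R$ disks of $\tilde\Sigma$ $C^1$-close to totally geodesic disks, and invoke Lemma~\ref{fubinilemma} to pass from intrinsic disk averages back to the global average over $\Sigma$. The only difference is bookkeeping (a three-term $\epsilon/3$ decomposition via the auxiliary function $h_R$ versus the paper's $\epsilon/4$--$\epsilon/2$ split), plus your final paragraph helpfully spells out the Jacobi-field/sandwiching argument behind the $C^1$-closeness of exponential maps, a step the paper asserts more briefly.
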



	\begin{proof}
		For  $\delta$ small enough, \cite{seppi}  implies that $\Sigma$ is unique and has principal curvatures pointwise of magnitude $O(\delta)$.  Let $R$ be larger than the $R_0$ given by Corollary \ref{diskscor} applied to $f$ and $\epsilon/4$.   By making $\delta$ small enough, we can ensure that lifts $D$ of intrinsic disks of radius $R$ in $\Sigma$ to the universal cover are as $C^1$ close as desired to totally geodesic disks of that radius in the universal cover. In particular, for any lift of an intrinsic disk $D$ in $\Sigma$ we can find a totally geodesic disk $D'$ in $\mathbb{H}^3$ such that the averages of the pullback of $f$ over $D$ and $D'$ differ by at most $\epsilon/4$, and the average over $D$ therefore differs from the average of $f$ over $Gr_2(M)$ by at most $\epsilon/2$.



		 Now if $M$ is the maximum of $|f|$ over $Gr_2(M)$, taking the $\epsilon$ of the previous lemma to be  $\epsilon/(2M)$ and making $\delta$ small enough finishes the proof.

	\end{proof}

	\subsection{Variable Curvature} 
	
	We now prove Theorem \ref{quantitativeintro}.  Let $g$ be a metric on $M$ to which Theorems \ref{mainintro} or \ref{main} apply to construct a foliation $\mathcal{F}_g$ conjugate to the totally geodesic foliation in constant curvature via 	
	\[
	\Phi: Gr_2((M,g_{hyp})) \rightarrow Gr_2((M,g)).  
   \]

   Let $\Sigma_n$ be a sequence of properly immersed minimal surfaces in $(M,g_{hyp})$ that satisfy the hypotheses of Theorem \ref{quantitativeintro}: i.e., with areas tending to infinity such that the limit sets of the $\pi_1(\Sigma_n)$ in $\partial_\infty \mathbb{H}^3$ are $K_n$-quasicircles with $K_n$ tending to 1. Assume that the $\Sigma_n$ become uniformly distributed in $Gr_2(M)$.  In the previous subsection, we checked that this is always the case if $M$ contains no closed totally geodesic surfaces.  We will show that sequences of $g$-minimal surfaces $\Sigma_n'$ corresponding to the $\Sigma_n$ converges to a measure $\mu_g$ with  full support. 
   
   First we describe the measure $\mu_g$.  To do so, we need to introduce some terminology.  A foliation $\mathcal{G}$ with total space $E$ is given by a collection of charts of the form 
   \[
   \phi_\alpha: U_\alpha \times T_\alpha \rightarrow E,
   \]
   \noindent where $\phi_{\alpha}$ is a homeomorphism onto its image,  $\phi_{\alpha}|_{U_\alpha \times \{t_{\alpha}\}}$ maps into a leaf of $\mathcal{G}$ for any fixed $t_\alpha \in T_\alpha$ , and $U_\alpha$ and $T_\alpha$ are both homeomorphic to balls.  We call $T_\alpha$ the \textit{transversal} for the local chart defined by $\phi_{\alpha}$. Any closed loop in the total space beginning and ending at a point in the chart defines a \textit{transverse holonomy map} $T_\alpha \rightarrow T_{\alpha}$.

    \begin{defn} 
   	A \textit{transverse invariant measure} for a foliation is a choice of measure on the transversal for each local chart for the foliation, well-defined with respect to transition maps between overlapping charts, that is invariant under transverse holonomy maps.
   \end{defn}

The model foliation $\mathcal{F}=\mathcal{F}_{g_{hyp}}$ of $Gr_2(M,g_{hyp})$ has a natural tranverse invariant measure $\mu_T$ because its leaves are the orbits of an $SL(2,\mathbb{R})$ action, and $SL(2,\mathbb{R})$ is unimodular.  The natural volume form $\mu_{g_{hyp}}$ for $Gr_2(M,g_{hyp})$, normalized to have unit volume, decomposes in a local chart for $\mathcal{F}$ as a transverse sum of $\mu_T$ and the area forms for the leaves, up to scaling by a constant.  Taking the pushforward of $\mu_T$ by the map $\Phi$ that conjugates $\mathcal{F}$ to $ \mathcal{F}_g$ and summing this transverse invariant measure with the area forms for the leaves of $\mathcal{F}_g$ in the induced metrics for their projections to $(M,g)$ gives a measure on $Gr_2(M,g)$.  We define $\mu_g$ to be this measure, normalized to have unit volume.  

It is clear from the construction of $\Phi$ in terms of normal geodesic projections that $\Phi$ restricted to any leaf of $\mathcal{F}$ is smooth, and the leafwise differential of $\Phi$ varies continuously in the total space of $\mathcal{F}$.  We can thus define a continuous function $f_{\Phi} : Gr_2(M,g_{hyp}) \rightarrow \mathbb{R}$ by setting $f_{\Phi}(x)$ equal to the determinant of the differential at $x$ of $\Phi$ restricted to the leaf of $\mathcal{F}$ through $x$.  We can write this as
\begin{equation} \label{correctedpshfwd} 
 \Phi_* \mu_{g_{hyp}} = f_{\Phi} \circ \Phi^{-1}   \cdot \mu_g.  
 \end{equation} 
 
Now let $\Sigma_n$ be the sequence of closed  immersed surfaces in $(M,g_{hyp})$ from the start of the section.  Let $\Sigma_n'$ be the corresponding minimal surfaces in $(M,g)$. Recall the maps $f_{\Sigma_n}: \Sigma_n \rightarrow \Sigma_n'$ from the proof of \ref{uniqintro}, that exist for $n$ sufficiently large.  As $n$ tends to infinity, $\Sigma_n$ and $\Sigma_n'$ locally converge to (projections of) leaves of $\mathcal{F}$ and $\mathcal{F}_g$, and the maps $f_{\Sigma}$ locally converge to the restriction of $\Phi$ to the $\Sigma_n$.

Let $\mu_n'$ and  $\mu_n$ be the probability measures on $Gr_2(M,g)$ and $Gr_2(M,g_{hyp})$ corresponding to the $\Sigma_n'$ and the $\Sigma_n$.   We claim that the measures $\mu_n'$ weak-$*$  converge to $\mu_g$.  This is almost immediate from how everything has been set up.  Since the $f_{\Sigma_n}$ are locally converging to $\Phi$, the pushforwards of the $\mu_n$ by the $f_{\Sigma_n}$ are locally converging to the $\mu_n'$ multiplied by $f_\Phi \circ \Phi^{-1}$. By assumption the $\mu_n$ are converging to the volume measure $\mu_{g_{hyp}}$ on $Gr_2(M,g_{hyp})$.  Therefore by (\ref{correctedpshfwd}), the $\mu_n'$ are converging to $\mu_g$.  This finishes the proof of Theorem \ref{quantitativeintro}.

 We note that the only place where we used the fact that the $\Sigma_n'$ and the projections to $(M,g)$ of the leaves of $\mathcal{F}_g$ were minimal surfaces was in showing that the $\Sigma_n'$ locally converged to leaves of $\mathcal{F}_g$ and that the $f_{\Sigma_n}$ locally converged to $\Phi$.

\section{Examples where Theorem \ref{mainintro} Cannot Apply} \label{counterexample} 

\subsection{Introduction} 
We now present examples of closed hyperbolic 3-manifolds $M$ and negatively curved metrics $g$ on $M$ for which $Gr_2(M)$ cannot possibly admit a foliation as in  Theorem \ref{mainintro} or Theorem \ref{main}.  Recall that $\mathcal{P}_{\tilde{g}}$ is the set of totally geodesic planes in $\mathbb{H}^3$ considered as subspaces of $(\tilde{M},\tilde{g})$.  In the examples we construct, there will be multiple properly embedded minimal planes in $(\tilde{M},\tilde{g})$ at finite Hausdorff distance from the same element of $\mathcal{P}_{\tilde{g}}$, and so Theorem \ref{mainintro} could not apply to produce a foliation. The starting point for our construction will be a closed hyperbolic 3-manifold $M$ with a proper embedded totally geodesic surface $\Sigma$ in its hyperbolic metric (see \cite{ah3mtext} for examples.)  
\subsection{Modification of the Quasi-Fuchsian Metric Near Infinity} 
  In \cite{zenononuniq}, examples of quasi-Fuchsian hyperbolic 3-manifolds were constructed with arbitrarily many stable embedded minimal surfaces in their convex core whose inclusions are homotopy equivalences.  Fix such a quasi-Fuchsian manifold $Q$.   
  
  Let $2I_1^*$ and $2I_2^*$ be hyperbolic metrics in the conformal classes of the two ends of $Q$.   In  \cite[Theorem 5.8]{krasnovschlenker} it is shown that there are neighborhoods of infinity of each of the ends of $Q$, homeomorphic to $\Sigma \times (0,\infty)$, on which there are coordinates such that the metric on $Q$ can be written, for $t$ larger than some $T>>1$, in the form 
  \begin{equation} \label{nbdinfty} 
   \frac{1}{2}(e^{2t} I_j^* + 2 II_j^* + e^{-2t} III_j^*) + dt^2 \hspace{2mm}  j=1,2, 
  \end{equation}     

\noindent where the symmetric 2-tensors  $II^*_j$ and $III^*_j$ are determined by the $I^*_j$. There thus exist neighborhoods of each of the ends that have foliations by negatively curved equidistant surfaces, which are given by taking $T$ large enough and setting $t$ to be constant in the above coordinates.  These foliations were first constructed by Epstein \cite{epsenv}.



  \begin{lem} \label{warpnegcurv}  Let $g_t$, $t\in [0,1]$, be a smooth family of metrics on the surface $\Sigma$.  Then there exists $T$ such that the warped product metric 
  	\[
  	\frac{1}{4}e^{2t} g_{t} + dt^2 
  	\]
  	\noindent for $t \in (T,T+1)$ has negative sectional curvature. The number $T$ depend only on norms of the family $g_t$ and its time derivatives relative to some fixed background metric. 
  	
  	\end{lem}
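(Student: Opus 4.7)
The plan is to compute the sectional curvatures of the warped product metric $G = \phi(t)^2 g_t + dt^2$ with $\phi(t) = \tfrac{1}{2}e^t$ directly, and then observe that as $T \to \infty$ each of them is dominated by the $-1$ coming from the hyperbolic warping. I would work at a point with a $G$-orthonormal frame $\{E_1, E_2, N\}$, where $N = \partial_t$ is the unit normal to the slice $\Sigma \times \{t\}$ and $\{E_1, E_2\}$ is orthonormal in the induced metric $h_t := \phi(t)^2 g_t$, chosen to diagonalize the shape operator. The ambient sectional curvatures split into the tangential curvature $K(E_1,E_2)$ and the mixed curvatures $K(E_i,N)$.

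For the tangential part, apply the Gauss equation $K(E_1,E_2) = K_{h_t}(E_1,E_2) - \kappa_1\kappa_2$, where $\kappa_i$ are the principal curvatures. Two facts drive the estimate: (i) the intrinsic Gauss curvature of the slice satisfies $K_{h_t} = \phi(t)^{-2}K_{g_t} = 4e^{-2t}K_{g_t}$, and $|K_{g_t}|$ is uniformly bounded because $g_t$ is smooth on the compact set $\Sigma \times [0,1]$, so $|K_{h_t}| = O(e^{-2t})$; (ii) a direct Christoffel computation gives
\[
S_t = \frac{\phi'(t)}{\phi(t)}I + \tfrac{1}{2}\, g_t^{-1}\dot g_t = I + \tfrac{1}{2}\, g_t^{-1}\dot g_t,
\]
using $\phi'/\phi \equiv 1$. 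Its eigenvalues are $\kappa_i = 1 + \lambda_i/2$, where the $\lambda_i$ are the eigenvalues of $g_t^{-1}\dot g_t$, uniformly bounded in terms of the stated norms. For such a bounded family, $\kappa_1\kappa_2$ stays bounded below by a strictly positive constant, so once $T$ is large enough that $O(e^{-2T})$ is dominated by this constant, $K(E_1,E_2) < 0$ throughout $\Sigma\times(T,T+1)$.

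For the mixed part, apply the Riccati equation along the unit-speed geodesic $\gamma(t)$ tangent to $N$:
\[
\partial_t S_t + S_t^2 + R_{\mathrm{tidal}} = 0, \qquad R_{\mathrm{tidal}}(X) := R(X,\partial_t)\partial_t,
\]
so that $K(E_i,N) = \langle R_{\mathrm{tidal}}(E_i), E_i\rangle = -\langle (\partial_t S_t + S_t^2)(E_i), E_i\rangle$. Expanding with the formula for $S_t$,
\[
\partial_t S_t + S_t^2 \;=\; I \;+\; g_t^{-1}\dot g_t \;+\; \tfrac{1}{4}(g_t^{-1}\dot g_t)^2 \;+\; \tfrac{1}{2}\,\partial_t(g_t^{-1}\dot g_t),
\]
so the leading term is $I$ (again forced by $\phi'/\phi = \phi''/\phi = 1$) and gives $K(E_i,N) \approx -1$; the remaining terms are uniformly bounded in terms of $\|g_t^{-1}\dot g_t\|$ and $\|\partial_t(g_t^{-1}\dot g_t)\|$, so they cannot overwhelm the leading $-1$.

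Combining the two estimates and choosing $T$ large enough that the $O(e^{-2T})$ contribution in the tangential case is dominated by $\kappa_1\kappa_2$, one obtains negative sectional curvature throughout $\Sigma \times (T,T+1)$. The main obstacle is purely bookkeeping: tracking that the corrections coming from $g_t^{-1}\dot g_t$ and its time derivative are uniformly controlled by the stated norms of the family, so that the threshold $T$ depends only on those quantities and not on any finer feature of $g_t$. There is no genuine geometric difficulty beyond this; the negativity is forced by the exponential warping through the identities $\phi'/\phi = \phi''/\phi = 1$, together with the fact that the enormous conformal factor $\tfrac14 e^{2t}$ flattens the slice.
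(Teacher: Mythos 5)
Your decomposition into the Gauss equation for the tangential plane and the Riccati equation for the mixed planes is the right framework, and the shape operator formula $S_t = I + \tfrac{1}{2}\, g_t^{-1}\dot g_t$ (coming from $\phi'/\phi \equiv 1$) is correct. But the proof has a genuine gap at the step where you conclude that the corrections from $g_t^{-1}\dot g_t$ cannot flip the sign. You assert that ``for such a bounded family, $\kappa_1\kappa_2$ stays bounded below by a strictly positive constant.'' Writing $\kappa_i = 1 + \lambda_i/2$ with $\lambda_i$ the eigenvalues of $g_t^{-1}\dot g_t$, boundedness of the $\lambda_i$ does \emph{not} prevent, say, $\lambda_1 \le -2 < \lambda_2$, in which case $\kappa_1\kappa_2 \le 0$; then $K(E_1,E_2) = K_{h_t} - \kappa_1\kappa_2 \ge K_{h_t}$, which tends to $0$ from whatever sign $K_{g_t}$ has, so no large $T$ gives negativity. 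Concretely, if $g_t^{-1}\dot g_t$ has eigenvalues $4$ and $-4$ at some point (perfectly compatible with uniform norm bounds on the family), then $\det S_t = \det\bigl(\mathrm{diag}(3,-1)\bigr) = -3$, and the tangential sectional curvature there tends to $+3$ as $T\to\infty$. The identical issue appears in the mixed curvature: the remainder $g_t^{-1}\dot g_t + \tfrac{1}{4}(g_t^{-1}\dot g_t)^2 + \tfrac{1}{2}\partial_t(g_t^{-1}\dot g_t)$ being ``uniformly bounded'' does not preclude it from swamping the leading $I$.

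What the argument really needs, and what your write-up silently assumes, is a \emph{smallness} hypothesis on the family rather than mere boundedness --- e.g.\ that the operator norm of $g_t^{-1}\dot g_t$ is strictly less than $2$, so every $\kappa_i>0$ and $\det S_t$ is bounded away from zero, together with a corresponding bound on $\partial_t(g_t^{-1}\dot g_t)$ keeping $\partial_t S_t + S_t^2$ positive definite. That quantitative condition is the actual content to be established, and it is not supplied by your argument nor stated explicitly in the lemma's hypotheses. To be fair, this is not purely a defect of your write-up: the paper's own justification is a one-line remark that the $t$-derivative terms ``are much larger than the other terms and control the sign,'' which makes the same unspoken assumption that those dominant terms carry the correct sign. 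Both your Gauss--Riccati computation and the raw Christoffel-symbol computation the paper alludes to isolate the same leading object $\det(I + \tfrac{1}{2}g_t^{-1}\dot g_t)$ (resp.\ $\partial_t S_t + S_t^2$), and the sign of that object is precisely what is at issue; it is not controlled by norm bounds alone, so the reduction to ``bookkeeping'' at the end of your proof is not warranted.
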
 
  
  This is not hard to check from the formula for the sectional curvature in terms of the Christoffel symbols, and the formula for the Christoffel symbols in terms of derivatives of the metric.  The terms that contain $t$-derivatives of the metric are much larger than the other terms and control the sign.  
  
  Now, for $t_0$ large, take a smooth path of metrics $\gamma_t^j$, $t\in[0,1]$, joining
\begin{equation} 
2(I^*_j + 2e^{-2t_0} II^*_j + e^{-4t_0} III^*_j)
\end{equation} 
\noindent to 
\[ 
(1+ 2e^{-2(t_0+1)} + e^{-4(t_0+1)})g_{\Sigma}= 4e^{-2(t_0+1)}\cosh^2(t_0+1) g_{\Sigma},  
\]
\noindent where $g_{\Sigma}$ is the induced hyperbolic metric on the totally geodesic surface $\Sigma$ in $M$. We also require that for all $\epsilon$ small, 
\[
\gamma_{\epsilon}^j =2( I^*_j + (2e^{-2(t_0+\epsilon)} II^*_j + e^{-4(t_0+\epsilon)} III^*_j))
\]
\noindent and
\[
\gamma_{1-\epsilon}^j=  (1+ 2e^{-2((t_0+1-\epsilon)} + e^{-4(t_0+1-\epsilon)})g_{\Sigma}. 
\]
\noindent Clearly the paths $\gamma_t^j$ can be chosen so that the norms of the metrics $\gamma_t^j$ and their derivatives in time are uniformly bounded, independent of $t_0$, relative to a fixed background metric.  Then applying Lemma \ref{warpnegcurv}, provided we chose $t_0$ large enough, the metric
\begin{equation} \label{interpol} 
\frac{1}{4}  e^{2t} (\gamma_{(t-t_0)}^j) + dt^2 
\end{equation} 
\noindent on $\Sigma \times (t_0,t_0+1)$ has negative sectional curvature.  

\subsection{Modification of the Metric on a Fuchsian Cover} 
Now take $M$ to be any closed hyperbolic 3-manifold that admits a closed embedded totally geodesic surface $\Sigma$. Let $F\cong \Sigma \times \mathbb{R}$ be the cover of $M$ corresponding to $\Sigma$.  If $g_{\Sigma}$ is the metric on $\Sigma$ induced by the hyperbolic metric on $M$, the metric on $F$ can be written as a warped product
\begin{equation} \label{fuchsian}  
 (\cosh^2t) g_{\Sigma} + dt^2.   
\end{equation}  

We can define a new negatively curved metric on $F$ by cutting out the $t_0+1$-neighborhood of $\Sigma$ and gluing in the metric (\ref{interpol})  from the previous subsection, which interpolates between the metrics on the ends of $F$ and the middle of $Q$. This new metric on $F$ contains a region isometric to the convex core of $Q$.    Denote this new metric on $F$ by $g'$.

\subsection{Passage to Finite Covers to Facilitate Gluing } \label{covers} 
We claim that by passing to finite covers, we can make the normal injectivity radius of $\Sigma$ in $M$ arbitrarily large. It is a fact that for every $g \in \pi_1(M) \setminus \pi_1(\Sigma)$, there exists a finite index subgroup $G$ of $\pi_1(M)$ such that $\pi_1(\Sigma)\subset G$ but $g \notin G$ \cite[Lemma 5.3.6]{ah3mtext}.

Fix a basepoint for $\pi_1(M)$, and let $F$ be the Fuchsian cover of $M$ corresponding to $\pi_1(\Sigma)$. Fix a connected polyhedral fundamental domain $P$ for the action of $\pi_1(M)$ on  $\mathbb{H}^3$.  Then $F$ is tessellated by copies of $P$ which are fundamental domains for the covering map from $F$ to $M$, and any fixed normal neighborhood $N$ of the central totally geodesic copy of $\Sigma$ in $F$ is contained in a finite number copies of $P$ in $F$. By choosing elements of $\pi_1(M)$ representing the cosets of $\pi_1(\Sigma)$ corresponding to these finitely many fundamental domains, we can find a finite index subgroup of $\pi_1(M)$ containing $\pi_1(\Sigma)$ but not containing any of these coset representatives.  The finite cover $M'$ corresponding to this subgroup is then also covered by $F$, and the projection of the normal neighborhood $N$ to $M'$ is injective.  This shows that the normal injectivity radius of $\Sigma$ can be made arbitrarily large by passing to finite covers of $M$.

\subsection{Construction of the Metric} 

Outside of a large central neighborhood $N$ in $F$, the metric   $g'$ agrees with the Fuchsian metric. By \ref{covers}, we can find a finite cover $M'$ of $M$ such that every point on $N$ is at a distance from $\Sigma$ in the Fuchsian metric less than the normal injectivity radius of $\Sigma$ in $M'$. Then $N$ projects injectively to $M'$ under the covering map, so we can use $g'$ to define a new negatively curved metric on $M'$ which we also call $g'$.  Outside of the projection of $N$ to $M'$, $g'$ is equal to the original hyperbolic metric on $M'$.  

In this new metric $g'$ on $M'$, there are several properly embedded $\pi_1$-injective stable  minimal surfaces whose fundamental groups include as subgroups in the conjugacy class of $\pi_1(\Sigma)$ in $\pi_1(M')$.  These lift to the universal cover of $M'$ to give several distinct properly embedded minimal planes at finite Hausdorff distance from the same element of $\mathcal{P}_{\tilde{g}'}$. 

\begin{rem}

It seems likely that the metric on $M'$ we constructed can be joined to the constant curvature metric through a smooth path of negatively curved metrics by performing the above construction on a smooth path in quasi-Fuchsian space joining $F$ to $Q$.  Theorem \ref{mainintro} would then apply with $T<\infty$ to this path of metrics.     	
	
\end{rem}

\section{A Stability Estimate for the Foliations of Theorem \ref{mainintro}} \label{stabilitysect}

Fix a smooth family of metrics $g_t$, $t\in[0,1]$, to which Theorem \ref{mainintro} applies with $T=\infty$ to produce foliations. Let $S$ be a totally geodesic plane in $\tilde{M} \cong \mathbb{H}^3$, and let $S_t$ be the $\tilde{g}_t$-minimal plane in $\tilde{M}$ corresponding to $S$.  For fixed $t_0\in[0,1)$, elliptic PDE theory implies that $S_{t}$ converges smoothly uniformly to $S_{t_0}$. For $t$ close to $t_0$, $S_t$ is a graph over $S_{t_0}$ in normal coordinates for a tubular neighborhood of $S_{t_0}$, and so differentiating in $t$ at $t_0$ we obtain a vector field $v$ normal to $S_{t_0}$.  

\begin{thm} \label{stability}  
 Let $\epsilon_0 < \epsilon$ be given, and suppose $S_{t_0}$ is $\epsilon$-subordinate.  Then there exists $\delta$ depending only on $\epsilon$ and bounds on the $g_t$ and the time derivatives $g_t'$ of the $g_t$ such that if $\delta> t-t_0>0$, then $S_t$ is $\epsilon_0$-subordinate.  (The same $\delta$ works for every $\epsilon$-subordinate $S_{t_0}$.)        		
\end{thm}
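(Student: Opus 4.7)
The plan is to differentiate the minimal surface equation in $t$ and use the $\epsilon$-subordinate condition on $S_{t_0}$ to invert the resulting Jacobi operator with uniform bounds. For $t$ close to $t_0$, represent $S_t$ as a normal graph over $S_{t_0}$ with graphing function $u_t : S_{t_0} \to \mathbb{R}$, where $u_{t_0}\equiv 0$; this is justified by the smooth convergence $S_t \to S_{t_0}$ already established, together with the bounded second fundamental form coming from $\epsilon$-subordination. The minimal surface equation in $\tilde{g}_t$ takes the form $\mathcal{M}_t(u_t) = 0$ for a quasilinear elliptic operator $\mathcal{M}_t$ depending smoothly on $t$ through $\tilde{g}_t$ and its time derivative.

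Differentiating $\mathcal{M}_t(u_t) = 0$ in $t$ at $t_0$ and setting $v = \partial_t u_t|_{t=t_0}$ gives the inhomogeneous Jacobi equation
\[
Lv = -f, \qquad L = \Delta_{S_{t_0}} + |A_{t_0}|^2 + \mathrm{Ric}_{t_0}(\nu_{t_0},\nu_{t_0}),
\]
where $f$ is an explicit expression in $\tilde{g}_{t_0}'$ and first-order geometric data of $S_{t_0}$, and is uniformly bounded in $L^\infty$ in terms of bounds on $g_t$ and $g_t'$. The $\epsilon$-subordinate condition says precisely that the zeroth-order coefficient of $L$ is bounded above by $-\epsilon$, so a standard maximum-principle comparison (e.g.\ with the constant supersolution $\|f\|_\infty/\epsilon$, run on an exhaustion of $S_{t_0}$ by compact subdomains) yields $\|v\|_\infty \leq \|f\|_\infty/\epsilon$. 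Interior Schauder estimates, applied uniformly thanks to the cocompactness of the $\pi_1(M)$-action on $\tilde{M}$ and the uniform bound on the second fundamental form of $S_{t_0}$, upgrade this to a $C^{2,\alpha}$ bound $\|v\|_{C^{2,\alpha}(S_{t_0})} \leq C_1$ with $C_1$ depending only on $\epsilon$ and bounds on the family $g_t$.

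Integrating this estimate over $[t_0,t]$ gives $\|u_t\|_{C^{2,\alpha}(S_{t_0})} \leq C_1|t-t_0|$. Since the pointwise quantity $Q_t := |A_t|^2 + \mathrm{Ric}_t(\nu_t,\nu_t)$ depends in a uniformly Lipschitz way on $u_t$ in the $C^2$ topology and on $g_t, g_t'$, we obtain
\[
\|Q_t - Q_{t_0}\|_\infty \leq C_2 |t-t_0|,
\]
with $C_2$ depending only on $\epsilon$ and bounds on the family. Choosing $\delta = (\epsilon - \epsilon_0)/C_2$ forces $Q_t < Q_{t_0} + (\epsilon-\epsilon_0) \leq -\epsilon_0$, so that $S_t$ is $\epsilon_0$-subordinate, as required.

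The main obstacle is making the elliptic estimates genuinely uniform over the noncompact $S_{t_0}$ and uniform over all leaves simultaneously. Both issues are handled by the strict upper bound $-\epsilon$ on the zeroth-order coefficient of $L$, which gives a maximum principle insensitive to noncompactness, combined with the cocompact action of $\pi_1(M)$ and the a priori curvature control of any $\epsilon$-subordinate leaf, which together make all local Schauder constants uniform across leaves.
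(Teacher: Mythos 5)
Your overall strategy parallels the paper's: linearize the minimal surface equation in $t$, get a $C^0$ bound on the normal velocity $v$, upgrade to $C^2$ via Schauder, then integrate and use Lipschitz continuity of $Q_t = |A_t|^2 + \mathrm{Ric}_t(\nu_t,\nu_t)$ to find $\delta$. Where you genuinely diverge from the paper is in the mechanism for the $C^0$ bound. You observe that $\epsilon$-subordination says exactly that the zeroth-order coefficient of the Jacobi operator $L = \Delta_{S_{t_0}} + |A_{t_0}|^2 + \mathrm{Ric}_{t_0}(\nu_{t_0},\nu_{t_0})$ is $< -\epsilon$, and you try to exploit this directly via a maximum principle. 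The paper instead does not use the Jacobi operator structure for the $C^0$ bound at all; it uses Lemma \ref{lemconvex} to produce the mean-convex parallel surfaces $S_{t_0}^{\pm s}$ (mean curvature exceeding $\tfrac{\epsilon}{2}s$ in magnitude), shows these barriers remain mean-convex in $g_t$ whenever $|t-t_0| < \xi|s|$, and then runs the trapping argument of Lemma \ref{lim} to conclude that $S_t$ lies between $S_{t_0}^{\pm s}$ --- giving $\|u_t\|_{C^0} \leq |t-t_0|/\xi$ and hence $\|v\|_{C^0} \leq 1/\xi$ directly.

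The gap in your version is precisely at this $C^0$ step. The leaf $S_{t_0}$ is a noncompact plane, and the ``exhaustion by compact subdomains'' comparison is not justified as stated: on each exhausting domain $\Omega_k$ you have no control on the boundary values of $v$, so you cannot deduce $v \leq \|f\|_\infty/\epsilon$ on $\Omega_k$ from the fact that $\|f\|_\infty/\epsilon$ is a supersolution. The standard rescue is a Cheng--Yau/Omori--Yau type argument (available since $S_{t_0}$ has bounded geometry, coming from the uniform bound on $|A_{t_0}|$ implied by $\epsilon$-subordination and the compactness of $M$), but that requires knowing \emph{a priori} that $v$ is bounded. This is not automatic: the smooth uniform convergence $S_t\to S_{t_0}$ established earlier gives $\|u_t\|_{C^2}\to 0$ but does not bound the time derivative $v = \partial_t u_t|_{t_0}$. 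Boundedness of $v$ is essentially the content of the estimate you are trying to prove. The paper's barrier argument sidesteps this entirely by bounding $\|u_t\|_{C^0}$ linearly in $|t-t_0|$ before any differentiation, from which boundedness of $v$ falls out. If you want to keep the Jacobi-operator route, you would need to first establish that $v$ is bounded (for instance by appealing to the same barrier argument, or by a function-space implicit function theorem showing $u_t$ depends differentiably on $t$ in $C^{2,\alpha}_b(S_{t_0})$), and only then invoke the noncompact maximum principle; as written, the argument is circular at this point.
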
   

\noindent By bounds on $g_t$ and $g_t'$, we mean bounds on these tensors and their covariant derivatives up to second order in the $L^\infty$ norms induced by the hyperbolic metric $g_0$.      

\begin{proof}
The minimal surface equation for a graph $u$ in $\mathbb{R}^3$ over a region in the $xy$-plane can be written 
\begin{equation} \label{minsurfaceflat} 
(1+u_{x_2}^2) u_{x_1x_1} + (1 + u_{x_1}^2) u_{x_2x_2} - 2 u_{x_1} u_{x_2} u_{x_1x_2}=0.  	
\end{equation} 

If we dilate the metric and zoom in at a point $x_0$ on $S_{t_0}$ at a scale where the metrics on both $\tilde{M}$  and $S_{t_0}$ are almost Euclidean to second order, then the coefficients $A^{ij}$,$B^i$,$C$ of the second order equation
\[
A^{ij} u_{x_ix_j} +   B^i u_{x_i} + C=0
\]
 $S_{t_0}$ satisfies, writing it locally as a graph $u$ over a coordinate plane, can be made arbitrarily $C^0$ close to those of Equation (\ref{minsurfaceflat}).  The amount we need to dilate the metric at $x_0$ to obtain a given degree of closeness is determined by bounds on  $g_{t_0}$ and the principal curvatures of $S_{t_0}$, and the latter can be bounded in terms of bounds on $g_{t_0}$ by the fact that $S_{t_0}$ is $\epsilon$-subordinate.  For $t$ close to $t_0$ and $x$ close to $x_0$, we can write the surfaces $S_{t}$ as graphs $u(x,t)$, where $u(x,t_0) = u(x)$.  Then the derivative vector $v$ in these coordinates equals  $u_t(x,t_0)$. Differentiating the minimal surface equations the $u(x,t)$ satisfy in time at $t_0$, we obtain a second order equation of the form 
\begin{equation} \label{differentiated} 
a^{ij} v_{x_ix_j} +   b^i v_{x_i} + c=0.   
\end{equation}  

The $a^{ij}$ can be made arbitrarily $C^0$ close to the coefficients of $u_{x_i x_j}$ in Equation (\ref{minsurfaceflat})  provided we dilated the metric enough at $x_0$, and so we can take (\ref{differentiated}) to be uniformly elliptic with ellipticity constant 1/2.  The amount we need to dilate the metric to ensure this again just depends on bounds on  $g_{t_0}$ and the principal curvatures of $S_{t_0}$.   The $b^i$ and $c$ can be bounded in terms of the first two derivatives of $u$--- which in turn are bounded by the absolute values of the principal curvatures of $S_{t_0}$ and bounds on $g_{t_0}$, bounds on $g_{t_0}$, and bounds on $g_{t_0}'$.  It follows that we can estimate the $C^2$-norm of $v$ if we have a $C^0$ estimate for $v$ \cite[Theorem 6.2, Schauder Interior Estimates]{gilbargtrudinger}. 

 By Lemma \ref{lemconvex}, the mean curvatures of the parallel distance-$s$ surfaces to $S_{t_0}$ are greater than $\frac{\epsilon}{2}s$ for $s$ small and positive and less than $\frac{\epsilon}{2}s$ for $s$ small and negative. Taking a coordinate chart where the parallel distance-$s$ surface is a graph $y^s$ over a coordinate plane, we can write the mean curvature of the parallel distance-$s$ surface in the metric $g_t$ as
 \[
 a^{ij}(t,s) y^s_{x_ix_j} +   b^i(t,s) y^s_{x_i} + c(t,s),   
 \]
 for $t$ close to $t_0$. Since 
 \[
 \max( |a^{ij}(t,s)-a^{ij}(t_0,s)|,|b^{i}(t,s)-b^i(t_0,s)|,|c(t,s)-c(t_0,s))< M|t-t_0|
 	\]
 	\noindent for some $M$ just depending on the norm of $g_t$ and its derivative in time and $\epsilon$, we can find some small $\xi$ such that if 
\begin{equation} \label{xi}  
 |t- t_0| < \xi|s|,       
\end{equation}                   
then the distance-$s$ parallel surfaces are mean-convex in the metric $g_t$.  The constant $\xi$ can be chosen so that this statement also holds for all of the other $\epsilon$-subordinate (projections of) leaves of the foliation, not just the given $S_{t_0}$ we are considering. One can then show by essentially the same argument as the proof of Lemma \ref{lim} that $S_t$ is contained between the signed distance $\pm s$ parallel surfaces to $S_{t_0}$ if $t$ and $s$ satisfy (\ref{xi}.)  By sending $t$ to $t_0$, this implies that the magnitude of $v$ is bounded above by $1/\xi$ in the coordinates we chose. This gives the desired $C^0$ estimate for $v$.  

In this way, we can obtain an upper bound for the $C^2$ norm of the normal derivative vector field $v(t)$ for $S_t$ which are $\epsilon$-subordinate. We thus obtain an upper bound for the $C^2$ norm of $v(t)$ for $S_t$ that are $\epsilon_0$-subordinate, since such $S_t$ are also $\epsilon$-subordinate.  The surface $S_{t_0}$ is $\epsilon_0$-subordinate, and $S_t$ remains $\epsilon_0$-subordinate for $t-t_0$ less than some $\delta$.  One can obtain a lower bound for $\delta$ in terms of the difference $\epsilon - \epsilon_0$, bounds on $g_t$ and $g_t'$, and the bound on the $C^2$ norm of $v(t)$ by differentiating the formula for the principal curvatures of $S_t$ with respect to time in a local coordinate chart, and using the $C^2$ bound on $v(t)$.  This completes the proof.

\end{proof}

		\bibliography{bibliography}{}

\begin{thebibliography}{MMO17}

\bibitem[Alv18]{alvarezsuspension}
S\'{e}bastien Alvarez.
\newblock Gibbs measures for foliated bundles with negatively curved leaves.
\newblock {\em Ergodic Theory Dynam. Systems}, 38(4):1238--1288, 2018.

\bibitem[AM18]{ambmont}
Lucas Ambrozio and Rafael Montezuma.
\newblock On the width of unit volume three-spheres.
\newblock {\em arXiv:1809.03638 [math]}, 2018.

\bibitem[And83]{and2}
Michael~T. Anderson.
\newblock Complete minimal hypersurfaces in hyperbolic {$n$}-manifolds.
\newblock {\em Comment. Math. Helv.}, 58(2):264--290, 1983.

\bibitem[BL96]{banglang}
Victor Bangert and Urs Lang.
\newblock Trapping quasiminimizing submanifolds in spaces of negative
  curvature.
\newblock {\em Comment. Math. Helv.}, 71(1):122--143, 1996.

\bibitem[CM11]{coldmini}
T.H. Colding and W.~II Minicozzi.
\newblock {\em A Course in Minimal Surfaces}.
\newblock Graduate Studies in Mathematics, 2011.

\bibitem[CMN]{cmn}
D.~Calegari, F.C. Marques, and A.~Neves.
\newblock Counting minimal surfaces in negatively curved 3-manifolds.
\newblock {\em arXiv:2002.01062 [math]}.

\bibitem[Eps84]{epsenv}
Charles Epstein.
\newblock Envelopes of horospheres and weingarten surfaces in hyperbolic
  3-spaces.
\newblock {\em Preprint}, 1984.

\bibitem[Esk10]{pisanotes}
Alex Eskin.
\newblock Unipotent flows and applications.
\newblock In {\em Homogeneous flows, moduli spaces and arithmetic}, volume~10
  of {\em Clay Math. Proc.}, pages 71--129. Amer. Math. Soc., Providence, RI,
  2010.

\bibitem[Gro91a]{groapp}
M.~Gromov.
\newblock Foliated {P}lateau problem. {I}. {M}inimal varieties.
\newblock {\em Geom. Funct. Anal.}, 1(1):14--79, 1991.

\bibitem[Gro91b]{gromovsignandgeometricmeaning}
M.~Gromov.
\newblock Sign and geometric meaning of curvature.
\newblock {\em Rend. Sem. Mat. Fis. Milano}, 61:9--123 (1994), 1991.

\bibitem[Gro00]{georigidity}
Mikha\"{\i}l Gromov.
\newblock Three remarks on geodesic dynamics and fundamental group.
\newblock {\em Enseign. Math. (2)}, 46(3-4):391--402, 2000.

\bibitem[GT01]{gilbargtrudinger}
David Gilbarg and Neil~S. Trudinger.
\newblock {\em Elliptic partial differential equations of second order}.
\newblock Classics in Mathematics. Springer-Verlag, Berlin, 2001.
\newblock Reprint of the 1998 edition.

\bibitem[Ham15]{ursulah}
Ursula Hamenst\"{a}dt.
\newblock Incompressible surfaces in rank one locally symmetric spaces.
\newblock {\em Geom. Funct. Anal.}, 25(3):815--859, 2015.

\bibitem[HW13]{almostfuchsian}
Zheng Huang and Biao Wang.
\newblock On almost-{F}uchsian manifolds.
\newblock {\em Trans. Amer. Math. Soc.}, 365(9):4679--4698, 2013.

\bibitem[HW15]{zenononuniq}
Zheng Huang and Biao Wang.
\newblock Counting minimal surfaces in quasi-{F}uchsian three-manifolds.
\newblock {\em Trans. Amer. Math. Soc.}, 367(9):6063--6083, 2015.

\bibitem[IMN18]{imn}
Kei Irie, Fernando~C. Marques, and Andr\'{e} Neves.
\newblock Density of minimal hypersurfaces for generic metrics.
\newblock {\em Ann. of Math. (2)}, 187(3):963--972, 2018.

\bibitem[Kap09]{kapovichbook}
Michael Kapovich.
\newblock {\em Hyperbolic manifolds and discrete groups}.
\newblock Modern Birkh\"{a}user Classics. Birkh\"{a}user Boston, Ltd., Boston,
  MA, 2009.
\newblock Reprint of the 2001 edition.

\bibitem[KM12a]{kmcounting}
Jeremy Kahn and Vladimir Markovi\'{c}.
\newblock Counting essential surfaces in a closed hyperbolic three-manifold.
\newblock {\em Geom. Topol.}, 16(1):601--624, 2012.

\bibitem[KM12b]{km}
Jeremy Kahn and Vladimir Markovic.
\newblock Immersing almost geodesic surfaces in a closed hyperbolic three
  manifold.
\newblock {\em Ann. of Math. (2)}, 175(3):1127--1190, 2012.

\bibitem[KS08]{krasnovschlenker}
Kirill Krasnov and Jean-Marc Schlenker.
\newblock On the renormalized volume of hyperbolic 3-manifolds.
\newblock {\em Comm. Math. Phys.}, 279(3):637--668, 2008.

\bibitem[LMN18]{lmn}
Yevgeny Liokumovich, Fernando~C. Marques, and Andr\'{e} Neves.
\newblock Weyl law for the volume spectrum.
\newblock {\em Ann. of Math. (2)}, 187(3):933--961, 2018.

\bibitem[MMO17]{mmo}
Curtis~T. McMullen, Amir Mohammadi, and Hee Oh.
\newblock Geodesic planes in hyperbolic 3-manifolds.
\newblock {\em Invent. Math.}, 209(2):425--461, 2017.

\bibitem[MNS19]{mns}
Fernando~C. Marques, Andr\'{e} Neves, and Antoine Song.
\newblock Equidistribution of minimal hypersurfaces for generic metrics.
\newblock {\em Invent. Math.}, 216(2):421--443, 2019.

\bibitem[MR03]{ah3mtext}
Colin Maclachlan and Alan~W. Reid.
\newblock {\em The arithmetic of hyperbolic 3-manifolds}, volume 219 of {\em
  Graduate Texts in Mathematics}.
\newblock Springer-Verlag, New York, 2003.

\bibitem[MS95]{mozesshah}
Shahar Mozes and Nimish Shah.
\newblock On the space of ergodic invariant measures of unipotent flows.
\newblock {\em Ergodic Theory Dynam. Systems}, 15(1):149--159, 1995.

\bibitem[Pet16]{petersen}
Peter Petersen.
\newblock {\em Riemannian geometry}, volume 171 of {\em Graduate Texts in
  Mathematics}.
\newblock Springer, Cham, third edition, 2016.

\bibitem[Rat91]{ratner}
Marina Ratner.
\newblock On {R}aghunathan's measure conjecture.
\newblock {\em Ann. of Math. (2)}, 134(3):545--607, 1991.

\bibitem[San17]{andrewopen}
Andrew Sanders.
\newblock Domains of discontinuity for almost-{F}uchsian groups.
\newblock {\em Trans. Amer. Math. Soc.}, 369(2):1291--1308, 2017.

\bibitem[Sch83]{schoencurvestimate}
Richard Schoen.
\newblock Estimates for stable minimal surfaces in three-dimensional manifolds.
\newblock In {\em Seminar on minimal submanifolds}, volume 103 of {\em Ann. of
  Math. Stud.}, pages 111--126. Princeton Univ. Press, Princeton, NJ, 1983.

\bibitem[Sep16]{seppi}
Andrea Seppi.
\newblock Minimal discs in hyperbolic space bounded by a quasicircle at
  infinity.
\newblock {\em Comment. Math. Helv.}, 91(4):807--839, 2016.

\bibitem[Sha91]{shah}
Nimish~A. Shah.
\newblock Closures of totally geodesic immersions in manifolds of constant
  negative curvature.
\newblock In {\em Group theory from a geometrical viewpoint ({T}rieste, 1990)},
  pages 718--732. World Sci. Publ., River Edge, NJ, 1991.

\bibitem[SU82]{sacksuhl}
J.~Sacks and K.~Uhlenbeck.
\newblock Minimal immersions of closed {R}iemann surfaces.
\newblock {\em Trans. Amer. Math. Soc.}, 271(2):639--652, 1982.

\bibitem[SY79]{schoenyauincompressible}
R.~Schoen and Shing~Tung Yau.
\newblock Existence of incompressible minimal surfaces and the topology of
  three-dimensional manifolds with nonnegative scalar curvature.
\newblock {\em Ann. of Math. (2)}, 110(1):127--142, 1979.

\bibitem[SZ20]{songzhou}
Antoine Song and Xin Zhou.
\newblock Generic scarring for minimal hypersurfaces along stable
  hypersurfaces.
\newblock {\em arXiv:2006.03038[math.DG]}, 2020.

\bibitem[Tau04]{taubesaf}
Clifford~Henry Taubes.
\newblock Minimal surfaces in germs of hyperbolic 3-manifolds.
\newblock In {\em Proceedings of the {C}asson {F}est}, volume~7 of {\em Geom.
  Topol. Monogr.}, pages 69--100. Geom. Topol. Publ., Coventry, 2004.

\bibitem[Uhl83]{uhl}
Karen~K. Uhlenbeck.
\newblock Closed minimal surfaces in hyperbolic {$3$}-manifolds.
\newblock 103:147--168, 1983.

\bibitem[Wal88]{walchuk}
Pawe\l~G. Walczak.
\newblock Dynamics of the geodesic flow of a foliation.
\newblock {\em Ergodic Theory Dynam. Systems}, 8(4):637--650, 1988.

\bibitem[Zim82]{zimmerfoliations}
Robert~J. Zimmer.
\newblock Ergodic theory, semisimple {L}ie groups, and foliations by manifolds
  of negative curvature.
\newblock {\em Inst. Hautes \'{E}tudes Sci. Publ. Math.}, (55):37--62, 1982.

\end{thebibliography}
		\bibliographystyle{alpha}
	
\end{document}